\newtheorem{thm}{Theorem}[section]
\newtheorem{lem}[thm]{Lemma}
\newtheorem{defi}[thm]{Definition}
\newtheorem{rem}[thm]{Remark}
\newtheorem{prop}[thm]{Proposition}
\newtheorem{cor}[thm]{Corollary}
\newtheorem{thmA}{Theorem}
\newtheorem{corB}{Corollary}
\newtheorem{setup}[thm]{Setup}
\def\QQ{\mathbb{Q}}
\def\ZZ{\mathbb{Z}}
\def\FF{\mathbb{F}}
\def\TT{\mathbb{T}}
\def\frob{\text{Frob}}
\def\fm{\mathfrak{m}}
\def\ord{\text{ord}}
\def\st{\text{st}}
\def\eis{\text{eis}}
\def\gal{\text{Gal}}
\newcommand{\GL}{\mathrm{GL}}
\def\Ga1{\Gamma_1}
\def\rhob{\bar\rho}
\def\tr{\operatorname{tr}}
\def\red{\text{red}}
\def\univ{\text{univ}}
\def \Hom{\text{Hom}}
\def\tan{\text{tan}}
\def\defo{\text{def}}
\def\ps{\text{pd}}
\DeclareMathOperator{\rank}{rank}
\DeclareFontFamily{U}{wncy}{}
\DeclareFontShape{U}{wncy}{m}{n}{<->wncyr10}{}
 \DeclareSymbolFont{mcy}{U}{wncy}{m}{n}
 \DeclareMathSymbol{\Sh}{\mathord}{mcy}{"58} 
\begin{document}
\baselineskip 17.5pt

\title{The Eisenstein ideal of weight $k$ and ranks of Hecke algebras}
\author{Shaunak V. Deo} 
\email{shaunakdeo@iisc.ac.in}
\address{Department of Mathematics, Indian Institute of Science, Bangalore 560012, India}
\date{}

\subjclass[2010]{11F80(primary); 11F25, 11F33 (secondary)}
\keywords{pseudo-representations; Eisenstein ideal; Hecke algebra, deformation rings}

\dedicatory{Dedicated to the memory of my father Vilas G. Deo}

\begin{abstract}
Let $p$ and $\ell$ be primes such that $p > 3$ and $p \mid \ell-1$ and $k$ be an even integer. We use deformation theory of pseudo-representations to study the completion of the Hecke algebra acting on the space of cuspidal modular forms of weight $k$ and level $\Gamma_0(\ell)$ at the maximal Eisenstein ideal containing $p$.
We give a necessary and sufficient condition for the $\ZZ_p$-rank of this Hecke algebra to be greater than $1$ in terms of vanishing of the cup products of certain global Galois cohomology classes.
We also recover some of the results proven by Wake and Wang-Erickson for $k=2$ using our methods.
In addition, we prove some $R=\TT$ theorems under certain hypothesis.
\end{abstract} 

\maketitle

\section{Introduction}
Let $p$ and $\ell$ be primes such that $p > 3$ and $p \mid \ell-1$.
Let $\TT$ be the Hecke algebra over $\ZZ_p$ acting faithfully on the space of modular forms of level $\Gamma_0(\ell)$ and weight $k$ and $\fm$ be its Eisenstein maximal ideal containing $p$ i.e. the maximal ideal of $\TT$ generated by $p$ and the prime ideal corresponding to the classical Eisenstein series of level $\Gamma_0(\ell)$ and weight $k$ having Atkin-Lehner eigenvalue $-1$.
Let $\TT_{\fm}$ be the completion of $\TT$ at $\fm$ and let $\TT^0_{\fm}$ be its cuspidal quotient.

In the setting given above, Mazur, in his landmark work on Eisenstein ideal (\cite{M2}), studied the cuspidal Hecke algebra $\TT^0_{\fm}$ in the case of $k=2$. He proved (among many other things) that $\TT^0_{\fm} \neq 0$ and also asked whether one can say anything about the $\ZZ_p$-rank of $\TT^0_{\fm}$.
Since then this question has been studied in detail by various people using different approaches. 
We will now give a brief summary of their works on the $\ZZ_p$-rank of $\TT^0_{\fm}$ when $k=2$.

\subsection{History}
In \cite{Mer}, Merel proved that the $\ZZ_p$-rank of $\TT^0_{\fm}$ is greater than $1$ if and only if the image of $\prod_{i=1}^{\frac{\ell-1}{2}}i^i$ in $(\ZZ/\ell\ZZ)^{\times}$ is a $p$-th power. His method was mainly based on computation of some Eisenstein elements in the first homology group of a modular curve.
In \cite{L2}, Lecouturier extended Merel's result by combining same circle of ideas with new methods.
In particular, he gave a necessary and sufficient condition for the $\ZZ_p$-rank of $\TT^0_{\fm}$ to be greater than $2$ in terms of a numerical invariant which is similar to the Merel's invariant mentioned above (see \cite[Theorem $1.2$]{L2}).
He also gave an alternate proof of Merel's result in \cite{L2}.

On the other hand, in \cite{CE}, Calegari and Emerton studied this question using deformation theory of Galois representations. They proved that $\TT^0_{\fm} = \ZZ_p$ if the $p$-part of the class group of $\QQ(\ell^{1/p})$ is cyclic. In \cite{WWE1}, Wake and Wang-Erickson used techniques from deformation theory of Galois pseudo-representations to prove that the $\ZZ_p$-rank of $\TT^0_{\fm}$ is greater than $1$ if and only if the cup products of certain global Galois cohomology classes vanish. They also recovered many results of Calegari-Emerton. The key step in both these works is a suitable $R=\TT$ theorem.
In \cite{WWE2}, Wake and Wang-Erickson studied this question in the case of squarefree level.
We refer the reader to the well-written introductions of \cite{Mer}, \cite{CE}, \cite{WWE1} \cite{WWE2} and \cite{L2} for a summary of the known results, nice exposition of various approaches to the problem and their comparison. 

One can say that the approach of Merel and Lecouturier is on the `analytic side' and the approach of Calegari--Emerton and Wake--Wang-Erickson is on the `algebraic side'.
In \cite{W}, Wake studied the Hecke algebras $\TT_{\fm}$ and $\TT^0_{\fm}$ and their Eisenstein ideals for weights $k > 2$ by unifying the two approaches mentioned above.
In particular, he gave a necessary and sufficient condition (in terms of the Eisenstein ideal and the derivative of Mazur--Tate $\zeta$-function that he defines) for the $\ZZ_p$-rank of $\TT^0_{\fm}$ to be $1$. This is an analogue of Merel's result (\cite[Th\'{e}oreme $2$]{Mer}) for higher weights.

\subsection{Aim and Setup}
The main aim of this article is to obtain necessary and sufficient conditions for $\rank_{\ZZ_p}(\TT^0_{\fm}) \geq 2$ when $k > 2$ in terms of vanishing of cup products of certain global Galois cohomology classes and class groups.
In particular, we prove analogues of \cite[Theorem $1.2.1$]{WWE1} and \cite[Corollary $1.2.2$]{WWE1} for $k > 2$ and we recover these results when $k=2$.

Our approach is based on deformation theory of Galois representations and pseudo-representations. So it is similar to the approach of \cite{CE} and \cite{WWE1}.
However, our methods are different. To be precise, even though our main tool is comparison between deformation rings (of either representations or pseudo-representations) and Hecke algebras, our main results are not based on $R=\TT$ theorems. We instead use the description of $\dfrac{\FF_p[\epsilon]}{(\epsilon^2)}$-valued ordinary pseudo-representations, analysis of pseudo-representations arising from actual representations and results from \cite{W} and \cite{M2}. 
The results from \cite{W} that we use are about the reducibility properties of the $\TT_{\fm}$-valued pseudo-representation (\cite[Theorem $5.1.1$]{W}) and the index of Eisenstein ideal in $\TT^0_{\fm}$ (\cite[Theorem $5.1.2$]{W}).

Note that, in \cite{WWE1}, Wake and Wang-Erickson work with pseudo-representations which are finite flat at $p$ (a notion that they define and study in \cite{WWE}).
But since this condition is not present in weight $k > 2$, we work with pseudo-representations that are ordinary at $p$ and recover the results of Wake and Wang-Erickson mentioned above using them.
In addition, we also prove some $R=\TT$ theorems in certain cases.

Note that in the case of $k=2$, we need \cite[Proposition II.$9.6$]{M2}. But it is not needed in the works of Wake--Wang-Erickson (\cite{WWE1}) and Calegari--Emerton (\cite{CE}). Moreover, both sets of authors recover \cite[Proposition II.$9.6$]{M2} using their methods.

Before stating our main results, we describe the setup with which we will be working.

\begin{setup} 
\label{basic}
Let $p > 3$ be a prime and $\ell$ be a prime such that $p \mid \ell-1$. 
Let $G_{\QQ,p\ell}$ be the Galois group of the maximal extension of $\QQ$ unramified outside $p$, $\ell$ and $\infty$ over $\QQ$ and let $G_{\QQ,p}$ be the Galois group of the maximal extension of $\QQ$ unramified outside $p$ and $\infty$ over $\QQ$. 
Denote the mod $p$ cyclotomic character of $G_{\QQ,p\ell}$ by $\omega_p$ and the $p$-adic cyclotomic character by $\chi_p$.
By abuse of notation, we will also denote the mod $p$ cyclotomic character of $G_{\QQ,p}$ by $\omega_p$.
Let $k \geq 2$ be an even integer and $\rhob_0 : G_{\QQ,p\ell} \to \GL_2(\FF_p)$ be the continuous, odd representation given by $\rhob_0 = 1 \oplus \omega_p^{k-1}$.
Let $\zeta_p$ denote a primitive $p$-th root of unity. 
Suppose the following hypotheses hold:
\begin{enumerate}
\item $p-1 \nmid k$,
\item\label{don} the $\omega_p^{1-k}$-component of the $p$-part of the class group of $\QQ(\zeta_p)$ is trivial,
\item\label{teen} $\dim_{\FF_p}(H^1(G_{\QQ,p},\omega_p^{k-1})) =1$.
\end{enumerate}
\end{setup}
For a positive integer $n$, let $B_n$ be the $n$-th Bernoulli number.

\begin{rem}
\label{rem1}
Using the Herbrand--Ribet theorem and Kummer's congruences, we conclude that Condition~\eqref{don} of Setup~\ref{basic} holds if and only if $p$ does not divide $B_k$.
Combining this with Kummer's congruences, we get that $\zeta(1-k) \in \ZZ_{(p)}^{\times}$. Hence the hypotheses of \cite{W} are satisfied in our setup.
\end{rem}

\begin{rem}
\label{rem2}
From \cite[Lemma 21]{BK}, we know that Condition~\eqref{teen} of Setup~\ref{basic} holds if and only if the $\omega_p^{p+1-k}$-component of the $p$-part of the class group of $\QQ(\zeta_p)$ is trivial.
Let $0 < k_0 <p-1$ be the integer such that $k \equiv k_0 \pmod{p-1}$.
Hence, by combining the reflection principle (\cite[Theorem 10.9]{Wash}) and the Herbrand--Ribet theorem, Condition~\eqref{teen} of Setup~\ref{basic} holds if $p \nmid B_{p+1-k_0}$.
\end{rem}

\begin{rem}
Combining Remarks~\ref{rem1} and \ref{rem2}, we get that Conditions~\eqref{don} and \eqref{teen} of Setup~\ref{basic} hold if one of the following conditions hold:
\begin{itemize}
\item $p$ is a regular prime.
\item Vandiver's conjecture holds for $p$.
\item $p \nmid B_kB_{p+1-k_0}$, where $0 < k_0 <p-1$ is the integer such that $k \equiv k_0 \pmod{p-1}$.
\item $p > 7$ and $k=4,6$.
\item $p \equiv 3 \pmod{4}$ and $k= \dfrac{p+1}{2}$.
\end{itemize}
Note that we get Condition~\eqref{teen} of Setup~\ref{basic} for $k=4$ from \cite[Corollary 3.8]{K} and for $k=6$ from \cite[Corollary 7.1]{Kup}.
On the other hand, Conditions~\eqref{don} and \eqref{teen} for $k = \dfrac{p+1}{2}$ follow from the Herbrand--Ribet Theorem and \cite[Theorem 1.1]{O}.
\end{rem}

In the rest of the article, we assume that we are in Setup~\ref{basic} unless mentioned otherwise.

Let $\TT_{\fm}$ be the Hecke algebra of level $\Gamma_0(\ell)$ and weight $k$ as defined in \S\ref{heckesec} and $\TT^0_{\fm}$ be its cuspidal quotient.

Denote the absolute Galois group of $\QQ_p$ and $\QQ_{\ell}$ by $G_{\QQ_p}$ and $G_{\QQ_{\ell}}$, respectively and denote their inertia subgroups by $I_p$ and $I_{\ell}$, respectively. 
Now our assumptions imply that $ \dim_{\FF_p}(\ker(H^1(G_{\QQ,p\ell},\omega_p^{1-k}) \to H^1(I_p,\omega_p^{1-k})))=1$ (see Lemma~\ref{cohomlem}).
Choose a generator $c_0 \in \ker(H^1(G_{\QQ,p\ell},\omega_p^{1-k}) \to H^1(I_p,\omega_p^{1-k}))$.
Let $\rhob_{c_0} : G_{\QQ,p\ell} \to \GL_2(\FF_p)$ be the representation given by $\rhob_{c_0} = \begin{pmatrix} 1 & *\\ 0 & \omega_p^{k-1} \end{pmatrix}$ where $*$ corresponds to $c_0$.

 Note that both $\ker(H^1(G_{\QQ,p\ell},1) \to H^1(I_p,1))$ and $H^1(G_{\QQ,p},\omega_p^{k-1})$ are also one dimensional.
Choose generators $a_0 \in \ker(H^1(G_{\QQ,p\ell},1) \to H^1(I_p,1))$ and $b_0 \in H^1(G_{\QQ,p},\omega_p^{k-1})$.
Denote the cup product of $c_0$ and $b_0$ by $c_0 \cup b_0$ and the cup product of $c_0$ and $a_0$ by $c_0 \cup a_0$.
So in particular, $c_0 \cup b_0 \in H^2(G_{\QQ,p\ell},1)$ and $c_0 \cup a_0 \in H^2(G_{\QQ,p\ell}, \omega_p^{1-k})$.

\subsection{Main Results}
We are now ready to state main results.
\begin{thmA}[see Corollary~\ref{cor1}, Corollary~\ref{cor2}, Theorem~\ref{thm2}]
\label{thma}
Suppose we are in Setup~\ref{basic}. Then:
\begin{enumerate}
\item\label{part:1} If $k=2$, then $\rank_{\ZZ_p}(\TT^0_{\fm}) = 1$ if and only if $c_0 \cup a_0 \neq 0$.
\item\label{part:2} If $k > 2$, then $\rank_{\ZZ_p}(\TT^0_{\fm}) = 1$ if and only if $c_0 \cup b_0 \neq 0$ and $c_0 \cup a_0 \neq 0$.
\end{enumerate}
\end{thmA}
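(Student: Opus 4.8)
The plan is to convert the rank statement into a vanishing statement for the relative cotangent space of $\TT^0_{\fm}$, and then to compute that space via ordinary pseudo-deformation theory, matching its (non)vanishing against (non)vanishing of the cup products. Since $\TT^0_{\fm}$ is finite flat over $\ZZ_p$ and local with residue field $\FF_p$, one has $\rank_{\ZZ_p}(\TT^0_{\fm}) = \dim_{\FF_p}(\TT^0_{\fm}/p\TT^0_{\fm})$, so $\rank_{\ZZ_p}(\TT^0_{\fm}) = 1$ is equivalent to $\TT^0_{\fm} = \ZZ_p$, equivalently to the vanishing of the relative cotangent space $\fm/(\fm^2 + p\TT^0_{\fm})$ (where $\fm$ now denotes the maximal ideal of $\TT^0_{\fm}$), equivalently to the statement that every $\ZZ_p$-algebra homomorphism $\TT^0_{\fm}\to\FF_p[\epsilon]/(\epsilon^2)$ reducing modulo $\epsilon$ to the structure map is trivial. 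So it suffices to decide, in terms of the cup products, whether such a nontrivial homomorphism exists. (That $\TT^0_{\fm}$ is nonzero in this setup, so that the rank is at least $1$, is classical.)

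Composing a hypothetical nontrivial homomorphism $\TT^0_{\fm}\to\FF_p[\epsilon]/(\epsilon^2)$ with the $\TT^0_{\fm}$-valued pseudo-representation produces a nontrivial ordinary pseudo-deformation of $\tr(\rhob_0) = 1 + \omega_p^{k-1}$ over $\FF_p[\epsilon]/(\epsilon^2)$, with determinant $\chi_p^{k-1}$, with the prescribed local behaviour at $\ell$, and whose reducibility is constrained by \cite[Theorem $5.1.1$]{W}. The core of the argument is then a direct classification of such pseudo-deformations. Using the generalized matrix algebra description of $\FF_p[\epsilon]/(\epsilon^2)$-valued pseudo-deformations of the sum of the two distinct characters $\mathbf{1}$ and $\omega_p^{k-1}$, the reducible deformations are governed by deformations of the two diagonal characters, which after imposing the ordinary, ramification and determinant conditions are spanned by $a_0\in\ker(H^1(G_{\QQ,p\ell},\mathbf{1})\to H^1(I_p,\mathbf{1}))$ and $b_0\in H^1(G_{\QQ,p},\omega_p^{k-1})$; a pseudo-deformation arising from an honest representation carries in addition a pair of extension classes, one of which refines the residual class $c_0$, and the trace records their product. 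I expect to identify the obstruction to producing such a non-reducible ordinary pseudo-deformation built from $c_0$ together with the $a_0$-direction, resp. the $b_0$-direction, with the cup product $c_0\cup a_0\in H^2(G_{\QQ,p\ell},\omega_p^{1-k})$, resp. $c_0\cup b_0\in H^2(G_{\QQ,p\ell},\mathbf{1})$. Hence such a deformation exists iff the corresponding cup product vanishes; for $k>2$ both directions are available, whereas for $k=2$ the relevant local condition at $p$ is finite-flatness rather than mere ordinarity, which removes the $b_0$-contribution (compatibly with \cite{WWE1}) and is the point at which \cite[Proposition II.$9.6$]{M2} is invoked. This yields parts~\ref{part:1} and~\ref{part:2}.

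To make this rigorous without a full $R=\TT$ theorem I would feed in Wake's structural results in place of such a theorem: \cite[Theorem $5.1.1$]{W} on the reducibility of the $\TT_{\fm}$-valued pseudo-representation, which distinguishes the pseudo-deformations genuinely visible on the cuspidal quotient $\TT^0_{\fm}$ from those visible only on $\TT_{\fm}$, and \cite[Theorem $5.1.2$]{W} on the index of the Eisenstein ideal in $\TT^0_{\fm}$, which together with the cotangent computation pins down $\TT^0_{\fm}/p\TT^0_{\fm}$.

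I expect the main obstacle to be the upper bound on the rank: showing that when $c_0\cup a_0\neq 0$ (and, for $k>2$, also $c_0\cup b_0\neq 0$) there is genuinely no nontrivial $\FF_p[\epsilon]/(\epsilon^2)$-deformation surviving into $\TT^0_{\fm}$. The delicate points are (i) tracking the ordinary condition at $p$ and the local condition at $\ell$ through the generalized matrix algebra, since at the residually reducible point one cannot simply work with honest $2$-dimensional representations, and (ii) separating genuinely cuspidal deformations from "Eisenstein" ones seen only on $\TT_{\fm}$, which is exactly what \cite[Theorem $5.1.1$]{W}, \cite[Theorem $5.1.2$]{W} and, in weight $2$, \cite[Proposition II.$9.6$]{M2} are needed for. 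The reverse implications — manufacturing a second $\ZZ_p$-direction in $\TT^0_{\fm}$ out of a vanishing cup product — should be comparatively routine: lift a cocycle over the vanishing $H^2$-class, assemble the corresponding ordinary pseudo-deformation, and verify the determinant and local conditions.
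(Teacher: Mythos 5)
Your overall strategy (ordinary pseudo-deformations, GMA analysis, cup products as obstructions, input from \cite[Theorems 5.1.1, 5.1.2]{W} and \cite[Proposition II.9.6]{M2}) matches the paper's in spirit, but there is a genuine gap, and you have located the difficulty in the wrong place. Your classification of first order deformations lives on the deformation-ring side: lifting a cochain over a vanishing cup product and checking the ordinary/$\ell$-unipotent/determinant conditions produces a nontrivial map out of $R^{\ps,\ord}_{\rhob_0,k}(\ell)$ (or out of $R^{\defo,\ord}_{\rhob_{c_0},k}(\ell)$), \emph{not} out of $\TT^0_{\fm}$; since the paper has no $R=\TT$ theorem in general (and deliberately avoids relying on one), this says nothing about $\rank_{\ZZ_p}(\TT^0_{\fm})$. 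So the direction you call ``comparatively routine'' --- vanishing cup product implies rank $>1$ --- is precisely the hard one. In the paper it is proved by contradiction through an index comparison: Lemma~\ref{eisenlem} bounds the index of the Eisenstein ideal $I_0$ of $R^{\defo,\ord}_{\rhob_{c_0},k}(\ell)$ by $p^{\nu+v_p(k)}$ (via the relation $g_{\ell}i_{\ell}g_{\ell}^{-1}=i_{\ell}^{\ell}$), the hypothesis $c_0\cup a_0=0$ gives $\phi_{\TT}(\ker\phi)\subset\ker(F)\cap(p,x_0^2)$ (Lemmas~\ref{infinlem} and~\ref{ordlem}), and the exact index $p^{\nu+v_p(k)}$ of $I^{\eis,0}$ from \cite[Theorem 5.1.2]{W} and \cite[Proposition II.9.6]{M2} then forces, if the rank were $1$, that $p$ generates the maximal ideal of $\TT_{\fm}$ --- a contradiction (Theorem~\ref{thm1}). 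Nothing in your sketch plays the role of this comparison. (The converse direction, rank $>1$ implies $c_0\cup a_0=0$, is the softer one, via the honest $\GL_2(\TT^0_{\fm})$-valued representation of Lemma~\ref{ordlem} together with Lemma~\ref{inflem}; your sketch of that part is roughly right.)

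The second gap is the case $k>2$, $c_0\cup b_0=0$, where one must show the rank is $>1$ unconditionally. Your ``both directions are available'' tangent-space heuristic again only enlarges the tangent space of the pseudo-deformation ring. The paper instead shows the Eisenstein ideal of $\TT_{\fm}$ is not principal when $p\mid k$ by a GMA computation inside $\TT_{\fm}$ itself, using the inertia--Frobenius relation at $\ell$, the reducibility quotient of \cite[Theorem 5.1.1]{W} and the index of \cite[Theorem 5.1.2]{W} (Theorem~\ref{thm2}), and then reduces $p\nmid k$ to $p\mid k$ by Jochnowitz's theorem on mod $p$ ordinary forms, shifting to a weight $k'>k$ with $k'\equiv k\pmod{p-1}$ and $p\mid k'$ (Theorem~\ref{thm3}); your proposal contains no substitute for either step, and in particular nothing at all addresses $p\nmid k$. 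Finally, your weight-$2$ remark misdescribes the mechanism: the paper never imposes finite flatness (that is the route of \cite{WWE1}, whose machinery you would have to develop to use it); in weight $2$ the absence of a $c_0\cup b_0$ hypothesis comes from Mazur's results that $\dim(\tan(\TT_{\fm}/(p)))=1$ (\cite[Proposition II.16.6]{M2}, via Lemma~\ref{principallem}), while \cite[Proposition II.9.6]{M2} enters only through the index of the Eisenstein ideal in $\TT^0_{\fm}$, not to ``remove the $b_0$-contribution.''
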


Note that part~\eqref{part:1} of Theorem~\ref{thma} has already been proven by Wake and Wang-Erickson in \cite{WWE1} using similar approach but different methods.

In \cite{W}, Wake has proved that when $k > 2$, $\rank_{\ZZ_p}(\TT^0_{\fm}) = 1$ if and only if the Eisenstein ideal of $\TT^0_{\fm}$ is principal and a certain element $\xi'_{\text{MT}} \in \FF_p$ (that he defines in \cite[Section $1.2.2$]{W}) is non-zero. See \cite[Theorem $1.2.4$]{W} for more details.
We don't use this result to prove part~\eqref{part:2} of Theorem~\ref{thma} but we do need some other results from \cite{W}.

To be precise, when $c_0 \cup b_0 =0$, we prove part~\eqref{part:2} of Theorem~\ref{thma} by proving that the Eisenstein ideal of $\TT^0_{\fm}$ is not principal (see Theorem~\ref{thm2} and Theorem~\ref{thm3}).
As a consequence of our analysis, we get the following result regarding the principality of the Eisenstein ideal of $\TT^0_{\fm}$:
\begin{corB}
\label{main}
Suppose we are in Setup~\ref{basic} and $k >2$.
Then the Eisenstein ideal of $\TT^0_{\fm}$ is principal if and only if $c_0 \cup b_0 \neq 0$.
Moreover, if Vandiver's conjecture holds for $p$, then these assertions hold if and only if $\prod_{j=1}^{p-1}(1-\zeta_p^j)^{j^{k-2}} \in (\ZZ/\ell\ZZ)^{\times}$ (where $\zeta_p \in \ZZ/\ell\ZZ$ is a primitive $p$-th root of unity) is not a $p$-th power.
\end{corB}

\begin{rem}
Note that Corollary~\ref{main} matches with the prediction made by Wake in \cite[Section $1.2.3$, Remark 3.2.1]{W}.
\end{rem}

\begin{rem}
If $p$ is a regular prime, then Vandiver's conjecture holds for $p$.
\end{rem}

When $c_0 \cup b_0 \neq 0$, the Eisenstein ideal is principal.
In this case, we prove that $\rank_{\ZZ_p}(\TT^0_{\fm}) = 1$ if and only if $c_0 \cup a_0 \neq 0$ by using an analysis of pseudo-representations arising from representations.

Let $\zeta_{\ell}$ be a  primitive $\ell$-th root of unity and let ${\zeta_{\ell}}^{(p)} \in \QQ(\zeta_{\ell})$  be an element such that $[\QQ(\zeta_{\ell}^{(p)}) : \QQ] = p$. 
Denote by $\text{Cl}(\QQ(\zeta_{\ell}^{(p)},\zeta_p))$ the class group of $\QQ(\zeta_{\ell}^{(p)},\zeta_p)$ and let $(\text{Cl}(\QQ(\zeta_{\ell}^{(p)},\zeta_p))/\text{Cl}(\QQ(\zeta_{\ell}^{(p)},\zeta_p))^p)[\omega_p^{1-k}]$ be the subspace of $\text{Cl}(\QQ(\zeta_{\ell}^{(p)},\zeta_p))/\text{Cl}(\QQ(\zeta_{\ell}^{(p)},\zeta_p))^p$ on which $\text{Gal}(\QQ(\zeta_p)/\QQ)$ acts by $\omega_p^{1-k}$.
Now we get the following corollaries (see Proposition~\ref{classgroupprop}):

\begin{corB}
\label{corb}
Suppose we are in Setup~\ref{basic} and $k=2$. Then the following are equivalent:
\begin{enumerate}
\item $\rank_{\ZZ_p}(\TT^0_{\fm}) = 1$.
\item $\dim_{\FF_p}((\text{Cl}(\QQ(\zeta_{\ell}^{(p)},\zeta_p))/\text{Cl}(\QQ(\zeta_{\ell}^{(p)},\zeta_p))^p)[\omega_p^{1-k}]) = 1$.
\item $\prod_{i=1}^{\frac{\ell -1}{2}}i^{i}$ in $(\ZZ/\ell \ZZ)^{\times}$ is not a $p$-th power.
\end{enumerate}
\end{corB}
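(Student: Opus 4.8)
The plan is to deduce Corollary B from part~\eqref{part:1} of Theorem~\ref{thma} together with Proposition~\ref{classgroupprop} and Merel's theorem (\cite{Mer}). The equivalence $(1)\Leftrightarrow(2)$ is the content of Proposition~\ref{classgroupprop}: one rewrites the cohomological condition $c_0\cup a_0\neq 0$ from Theorem~\ref{thma}\eqref{part:1} as a statement about a class group. So the first step is to make explicit the identification between the cup product $c_0\cup a_0\in H^2(G_{\QQ,p\ell},\omega_p^{1-k})$ vanishing or not and the dimension of $(\mathrm{Cl}(\QQ(\zeta_\ell^{(p)},\zeta_p))/\mathrm{Cl}(\QQ(\zeta_\ell^{(p)},\zeta_p))^p)[\omega_p^{1-k}]$. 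The class $a_0$ generates $\ker(H^1(G_{\QQ,p\ell},1)\to H^1(I_p,1))$, i.e. it is the unramified-at-$p$ class cutting out (the relevant part of) $\QQ(\zeta_\ell^{(p)})$; cupping with it and using Poitou--Tate / inflation--restriction relates non-vanishing of the cup product to whether the class $c_0$ survives upon restriction to the degree-$p$ field, which in turn (via Kummer theory and the usual translation of $\omega_p^{1-k}$-isotypic Selmer-type groups into eigenspaces of class groups of $\QQ(\zeta_\ell^{(p)},\zeta_p)$) measures exactly the $\omega_p^{1-k}$-eigenspace of that class group modulo $p$. Under our standing hypotheses this eigenspace always has dimension $\geq 1$ (coming from $c_0$ itself), so the dichotomy is precisely dimension $=1$ versus $\geq 2$.

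The second step is $(1)\Leftrightarrow(3)$, which is immediate from combining Theorem~\ref{thma}\eqref{part:1} with Merel's theorem: Merel proved in \cite{Mer} that for $k=2$, $\rank_{\ZZ_p}(\TT^0_{\fm})>1$ if and only if $\prod_{i=1}^{(\ell-1)/2}i^i$ is a $p$-th power in $(\ZZ/\ell\ZZ)^\times$. Since $\TT^0_{\fm}\neq 0$ (Mazur), $\rank_{\ZZ_p}(\TT^0_{\fm})=1$ is equivalent to $\rank_{\ZZ_p}(\TT^0_{\fm})\leq 1$, hence to the Merel product \emph{not} being a $p$-th power. This gives $(1)\Leftrightarrow(3)$ with essentially no further work, and then $(2)\Leftrightarrow(3)$ follows formally. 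One could alternatively prove $(2)\Leftrightarrow(3)$ directly: this is Lecouturier's reinterpretation of Merel's invariant as a class-group dimension (cf.\ \cite{L2}), and it may be worth citing that as a cross-check, but logically it is not needed.

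The main obstacle is the bookkeeping in step one: correctly pinning down which field $\QQ(\zeta_\ell^{(p)},\zeta_p)$ appears, which eigenspace, and that the dimension count has the right normalization so that "dimension $1$" (rather than "dimension $0$") corresponds to $c_0\cup a_0\neq 0$. Concretely, one must check that the global class $c_0$, restricted appropriately, always contributes one dimension to the $\omega_p^{1-k}$-eigenspace of $\mathrm{Cl}(\QQ(\zeta_\ell^{(p)},\zeta_p))/p$ (this uses $p\mid\ell-1$, the ramification-at-$\ell$ behaviour of $c_0$, and the hypothesis that the $\omega_p^{1-k}$-part of the $p$-class group of $\QQ(\zeta_p)$ is trivial so that nothing unramified-everywhere-at-$\ell$ intrudes), and that the cup product $c_0\cup a_0$ vanishes exactly when no \emph{further} class appears, i.e.\ when the eigenspace is exactly one-dimensional. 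This is precisely what Proposition~\ref{classgroupprop} is set up to deliver, so in the write-up I would simply invoke it for $(1)\Leftrightarrow(2)$ and invoke \cite{Mer} together with Theorem~\ref{thma}\eqref{part:1} for $(1)\Leftrightarrow(3)$, noting $\TT^0_{\fm}\neq 0$ by \cite{M2}, and conclude.
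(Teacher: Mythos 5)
Your proposal is correct, and for the equivalence $(1)\Leftrightarrow(2)$ it coincides with the paper: the paper's proof of Corollary~\ref{corb} is exactly ``Corollary~\ref{cor1} plus Proposition~\ref{classgroupprop}'' (your remark that the eigenspace always has dimension $\geq 1$, so that the dichotomy is $=1$ versus $\geq 2$, is the right normalization and is implicit in the construction of $M'=K\cdot K_{\rhob_{c_0}}$ inside the proof of Proposition~\ref{classgroupprop}). Where you diverge is the third item: the paper does \emph{not} invoke Merel's theorem at all. It obtains the numerical criterion from part~\eqref{part3} of Proposition~\ref{classgroupprop}, i.e.\ from Lecouturier's algebraic result \cite[Theorem 1.9]{L} relating the $\omega_p^{1-k}$-eigenspace of $\text{Cl}(K)/\text{Cl}(K)^p$ to the product $\prod_{i=1}^{\ell-1}i^{\sum_{j=1}^{i-1}j^{k-1}}$, which for $k=2$ is identified (up to $p$-th powers) with Merel's product $\prod_{i=1}^{(\ell-1)/2}i^i$. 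Your route — citing \cite{Mer} for $(1)\Leftrightarrow(3)$ and deducing $(2)\Leftrightarrow(3)$ formally — is logically sound (Merel's hypotheses hold here, and $\TT^0_{\fm}\neq 0$ by \cite{M2}), and it spares you the bookkeeping of matching the two numerical invariants; but it imports the analytic modular-symbol input that the paper's deformation-theoretic argument is designed to avoid, so it forfeits what the paper's proof buys, namely an independent re-derivation of Merel's criterion (and of the Wake--Wang-Erickson statement) as a genuine consequence of Theorem~\ref{thma} and the class-group computation. In short: same mechanism for $(1)\Leftrightarrow(2)$, a different and weaker-in-content (though valid) source for $(3)$.
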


Note that Corollary~\ref{corb} has already been proved by Wake and Wang-Erickson in \cite{WWE1} and by Lecouturier in \cite{L2} using different methods. However, Wake and Wang-Erickson use results of \cite{L} to prove that the second part of Corollary~\ref{corb} implies the first part.
We give a slightly different proof of the same (see Proposition~\ref{classgroupprop} and its proof).

\begin{corB}
\label{corc}
Suppose we are in Setup~\ref{basic} and $k>2$. Then the following are equivalent:
\begin{enumerate}
\item $\rank_{\ZZ_p}(\TT^0_{\fm}) = 1$.
\item $\dim_{\FF_p}((\text{Cl}(\QQ(\zeta_{\ell}^{(p)},\zeta_p))/\text{Cl}(\QQ(\zeta_{\ell}^{(p)},\zeta_p))^p)[\omega_p^{1-k}]) = 1$ and the restriction map $H^1(G_{\QQ,p},\omega_p^{k-1}) \to H^1(G_{\QQ_{\ell}},\omega_p^{k-1})$ is not the zero map.
\item $\prod_{i=1}^{\ell -1}i^{(\sum_{j=1}^{i-1}j^{k-1})} \in (\ZZ/\ell \ZZ)^{\times}$ is not a $p$-th power and the restriction map $H^1(G_{\QQ,p},\omega_p^{k-1}) \to H^1(G_{\QQ_{\ell}},\omega_p^{k-1})$ is not the zero map.
\end{enumerate}
Moreover, if Vandiver's conjecture holds for $p$, then the above assertions hold if and only if  $\prod_{i=1}^{\ell -1}i^{(\sum_{j=1}^{i-1}j^{k-1})} \in (\ZZ/\ell \ZZ)^{\times}$ is not a $p$-th power and $\prod_{i=1}^{p-1}(1-\zeta_p^i)^{i^{k-2}}  \in (\ZZ/\ell\ZZ)^{\times}$ (where $\zeta_p \in \ZZ/\ell\ZZ$ is a primitive $p$-th root of unity) is not a $p$-th power.
\end{corB}

Let $R^{\ps,\ord}_{\rhob_0,k}(\ell)$ be the universal $p$-ordinary, $\ell$-unipotent deformation ring of $(\tr(\rhob_0),\det(\rhob_0))$ with determinant $\chi_p^{k-1}$ (see Definition~\ref{orddefi} and the paragraph after it).
Let $R^{\ps,\st}_{\rhob_0,k}(\ell)$ be the universal $p$-ordinary, Steinberg-or-unramified at $\ell$ deformation ring of $(\tr(\rhob_0),\det(\rhob_0))$ with determinant $\chi_p^{k-1}$ (see Definition~\ref{stdefi} and the paragraph after it).
Let $R^{\defo,\ord}_{\rhob_{c_0},k}(\ell)$ be the quotient of the universal ordinary deformation ring of $\rhob_{c_0}$ with determinant $\chi_p^{k-1}$ defined in \S\ref{defsec}.

Note that, there is a surjective map $\phi_{\TT} : R^{\ps,\ord}_{\rhob_0,k}(\ell) \to \TT_{\fm}$ such that $\phi_{\TT}$ factors through $R^{\ps,\st}_{\rhob_0,k}(\ell)$ giving the map $\psi_{\TT} :  R^{\ps,\st}_{\rhob_0,k}(\ell) \to \TT_{\fm}$ (see Lemma~\ref{pseudoordlem}).
Moreover the map $R^{\ps,\ord}_{\rhob_0,k}(\ell) \to  \TT^0_{\fm}$ obtained by composing $\phi_{\TT}$ with the surjective map $\TT_{\fm} \to \TT^0_{\fm}$ factors through $R^{\defo,\ord}_{\rhob_{c_0},k}(\ell) $ giving the map $\phi_{\TT^0} : R^{\defo,\ord}_{\rhob_{c_0},k}(\ell) \to \TT^0_{\fm}$ (see Lemma~\ref{ordlem}).

We are now ready to state the $R=\TT$ theorems that we prove. Let $(R^{\ps,\ord}_{\rhob_0,k}(\ell))^{\red}$ be the maximal reduced quotient of $R^{\ps,\ord}_{\rhob_0,k}(\ell) $.
Recall, from Corollary~\ref{main}, that $c_0 \cup b_0 \neq 0$ if and only if the Eisenstein ideal of $\TT^0_{\fm}$ is principal.
\begin{thmA}
\label{thmb}
Suppose we are in Setup~\ref{basic} and $c_0 \cup b_0 \neq 0$. Then
\begin{enumerate}
\item\label{item1} $\phi_{\TT} : R^{\ps,\ord}_{\rhob_0,k}(\ell) \to \TT_{\fm}$ induces an isomorphism $(R^{\ps,\ord}_{\rhob_0,k}(\ell))^{\red} \simeq \TT_{\fm}$ of local complete intersection rings.
\item\label{item2} $\psi_{\TT} :  R^{\ps,\st}_{\rhob_0,k}(\ell) \to \TT_{\fm}$ is an isomorphism of local complete intersection rings.
\item\label{item3} $\phi_{\TT^0} : R^{\defo,\ord}_{\rhob_{c_0},k}(\ell) \to \TT^0_{\fm}$ is an isomorphism of local complete intersection rings.
\end{enumerate}
\end{thmA}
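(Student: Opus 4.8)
The plan is to deduce all three isomorphisms from a single rank/surjectivity computation, exploiting that every ring in sight surjects onto the relevant Hecke algebra and that, under the hypothesis $c_0\cup b_0\neq 0$, both sides turn out to be as small as possible. First I would fix notation: let $R = R^{\ps,\ord}_{\rhob_0,k}(\ell)$, $R^{\st} = R^{\ps,\st}_{\rhob_0,k}(\ell)$, $R^{\defo} = R^{\defo,\ord}_{\rhob_{c_0},k}(\ell)$, and recall from the paragraph before the statement the surjections $\phi_{\TT}\colon R\to\TT_{\fm}$ factoring as $R\twoheadrightarrow R^{\st}\xrightarrow{\psi_{\TT}}\TT_{\fm}$, and $\phi_{\TT^0}\colon R^{\defo}\to\TT^0_{\fm}$. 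Since these maps are already known to be surjective, for each I only need to bound the source from above by the target, i.e. produce a tangent-space (or relative cotangent) bound matching $\dim_{\FF_p}\fm_{\TT}/(\fm_{\TT}^2,p)$ and similarly for $\TT^0_{\fm}$, and then separately establish the local-complete-intersection property.

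The key input is the computation of the reduced tangent space of $R$ (equivalently, $R^{\st}$) via the explicit description of $\FF_p[\epsilon]/(\epsilon^2)$-valued ordinary, $\ell$-unipotent pseudo-representations deforming $(\tr\rhob_0,\det\rhob_0)$ referred to in the introduction. That tangent space is governed by the Galois cohomology groups $H^1(G_{\QQ,p\ell},\FF_p)$, $H^1(G_{\QQ,p\ell},\omega_p^{\pm(k-1)})$ together with the local ordinary and unipotent conditions, and the obstruction to lifting a first-order deformation to second order is measured by cup products, among which the class $c_0\cup b_0 \in H^2(G_{\QQ,p\ell},\FF_p)$ is exactly the one that appears. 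When $c_0\cup b_0\neq 0$, this cup product being nonzero kills one dimension of the naive tangent space, forcing $\dim_{\FF_p}\fm_{R^{\st}}/(\fm_{R^{\st}}^2,p)$ down to the minimum value, which is $1$ (the Hecke algebra $\TT_{\fm}$ is known to have $\ZZ_p$-rank $\geq 1$, e.g. from the Eisenstein series plus Mazur's $\TT^0_{\fm}\neq 0$). Combined with surjectivity of $\psi_{\TT}$ and the fact that $\TT_{\fm}$ is a finite flat $\ZZ_p$-algebra of rank $\geq 2$ here (under $c_0\cup b_0\neq 0$ the Eisenstein ideal is principal by the discussion in the introduction, and Wake's index formula \cite[Theorem~5.1.2]{W} pins the rank), a count of dimensions gives $\psi_{\TT}$ injective, hence an isomorphism. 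This yields \eqref{item2}; then \eqref{item1} follows because $R\twoheadrightarrow R^{\st}\xrightarrow{\sim}\TT_{\fm}$ and $\TT_{\fm}$ is reduced (being finite flat over $\ZZ_p$ and embedding into a product of fields via the $q$-expansions of eigenforms), so $R^{\st}$ is forced to be $(R)^{\red}$ — i.e. the Steinberg-or-unramified condition is exactly the reducedness defect of the ordinary pseudo-deformation ring. For \eqref{item3} I would run the same argument on the cuspidal side: the reducible-locus / reducibility results \cite[Theorem~5.1.1]{W} identify the image of $R^{\defo}$ with $\TT^0_{\fm}$, and the tangent space of $R^{\defo}_{\rhob_{c_0},k}(\ell)$ is cut out by the same cohomological data, so $c_0\cup b_0\neq 0$ again forces the relative tangent space over $\ZZ_p$ to be one-dimensional, matching $\rank_{\ZZ_p}\TT^0_{\fm}$ (which is $1$ in a suitable sense after modding out, but here I really want: $\TT^0_{\fm}$ is generated by one element over $\ZZ_p$), giving surjectivity-plus-injectivity.

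For the local complete intersection statements, the natural route is Wiles' numerical criterion: present each deformation ring as $\mathcal{O}[[x_1,\dots,x_n]]/(f_1,\dots,f_n)$ with $n = \dim$ tangent space $= 1$, so the ring is $\mathcal{O}[[x]]/(f)$, automatically a complete intersection and a hypersurface; the congruence-module computation on the Hecke side (Wake's index formula identifying the Eisenstein ideal's index, which under $c_0\cup b_0\neq 0$ equals the relevant $p$-adic $L$-value unit times an explicit quantity) matches the Fitting ideal computation on the $R$ side, so the numerical criterion upgrades the surjection to an isomorphism and simultaneously certifies the l.c.i. property. Concretely: since the tangent space is $1$-dimensional, $R^{\st}$ is a quotient of $\ZZ_p[[x]]$, hence automatically l.c.i., and likewise for $\TT_{\fm}$ once we know it is a quotient of $\ZZ_p[[x]]$ — which it is, being generated by one Hecke operator (say $U_\ell$ or $T_q$ for one auxiliary $q$) over $\ZZ_p$ modulo the Eisenstein ideal structure. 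The main obstacle I anticipate is precisely the tangent-space dimension bound: showing that $c_0\cup b_0\neq 0$ genuinely forces $\dim_{\FF_p}\fm_R/(\fm_R^2,p)=1$ rather than $\geq 2$ requires carefully matching the ordinary-at-$p$ and unipotent-at-$\ell$ conditions against the cohomology, checking that no other unobstructed direction survives, and invoking the hypotheses of the Setup ($p-1\nmid k$, triviality of the $\omega_p^{1-k}$-eigenspace of the class group of $\QQ(\zeta_p)$, and $\dim H^1(G_{\QQ,p},\omega_p^{k-1})=1$) to eliminate the spurious classes. Once that dimension count is in hand, everything else is bookkeeping with surjections, Nakayama, and the finiteness of $\TT_{\fm}$ over $\ZZ_p$.
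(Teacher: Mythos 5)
Your proposal has the right skeleton (the tangent space computation via Lemma~\ref{cuplem}, surjectivity of the comparison maps, and the Wiles--Lenstra numerical criterion fed by Wake's and Mazur's index formula $\TT^0_{\fm}/I^{\eis,0}\simeq \ZZ/p^{\nu+v_p(k)}\ZZ$), which is indeed the framework of the paper's proof, but the heart of the argument is missing. Knowing $\dim(\tan(R^{\ps,\st}_{\rhob_0,k}(\ell)/(p)))=1$ only exhibits $R^{\ps,\st}_{\rhob_0,k}(\ell)$ as a quotient of $\ZZ_p[[x]]$; it does not bound its size, so "a count of dimensions gives $\psi_{\TT}$ injective" does not work --- and you cannot use the $\ZZ_p$-rank of $\TT_{\fm}$ as the matching quantity, since Wake's index formula does not pin down that rank (determining when it exceeds the minimum is precisely the main theorem of this paper). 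What the numerical criterion actually requires is an upper bound on the deformation-theoretic congruence module, namely $|J_0^{\red}/(J_0^{\red})^2|\leq p^{\nu+v_p(k)}$, resp.\ $|J'_0/(J'_0)^2|\leq p^{\nu+v_p(k)}$, resp.\ $|R^{\defo,\ord}_{\rhob_{c_0},k}(\ell)/I_0|\leq p^{\nu+v_p(k)}$ (Lemma~\ref{eisenlem}). You assert that the Fitting-ideal computation on the $R$ side "matches" the Hecke side, but give no mechanism for it. In the paper this is the technical core: one attaches the faithful GMA of Lemma~\ref{gmalem} to the universal pseudo-deformation, uses Lemma~\ref{tangenlem} to see that the maximal ideal is $(p,x)$ with $x$ the off-diagonal-free entry of $\rho(i_{\ell})$ and that the Eisenstein ideal is $(x)$, and then exploits the relation $\rho(g_{\ell}i_{\ell}g_{\ell}^{-1})=\rho(i_{\ell})^{\ell}$ (for part~\eqref{item1}), or the Steinberg-or-unramified trace condition (for part~\eqref{item2}), to produce the key relation $x\cdot(1-\ell^{k}+\cdots)=0$, resp.\ $x\cdot(1-\ell^{k/2}+\cdots)=0$, which yields the required bound. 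Without such an input the numerical criterion cannot be invoked.

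Two further steps in your plan are also not sound as stated. Deducing \eqref{item1} from \eqref{item2} by saying "$\TT_{\fm}$ is reduced, so $R^{\ps,\st}_{\rhob_0,k}(\ell)$ is forced to be $(R^{\ps,\ord}_{\rhob_0,k}(\ell))^{\red}$" only shows the nilradical lies in the kernel of $R^{\ps,\ord}_{\rhob_0,k}(\ell)\to\TT_{\fm}$; to get \eqref{item1} you must show conversely that this kernel consists of nilpotents, which is why the paper runs a separate argument (the relation there only gives $x^2(1-\ell^k+xr'')=0$, hence a bound only after passing to the reduced quotient, and then the numerical criterion is applied to $(R^{\ps,\ord}_{\rhob_0,k}(\ell))^{\red}$ directly). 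Similarly, for \eqref{item3} the one-dimensionality of the tangent space of $R^{\defo,\ord}_{\rhob_{c_0},k}(\ell)$ does not by itself give injectivity of $\phi_{\TT^0}$: the paper compares the kernels of the two surjections from $\ZZ_p[[X]]$, using that $\ker$ of the map to $\TT^0_{\fm}$ is principal with generator $p^{\nu+v_p(k)}+Xf(X)$ and that Lemma~\ref{eisenlem} supplies an element $p^{e}+Xg(X)$, $e\leq \nu+v_p(k)$, in the kernel on the deformation side; this kernel comparison is the step your dimension count elides.
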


From part~\eqref{item3} of Theorem~\ref{thmb}, we get the following analogue of \cite[Corollary $1.6$]{CE}:
\begin{corB}
Suppose we are in Setup~\ref{basic} and $c_0 \cup b_0 \neq 0$. Then the $\ZZ_p$-rank of $\TT^0_{\fm}$ is the largest integer $n$ for which there exists an ordinary deformation $\rho : G_{\QQ,p\ell} \to \GL_2(\FF_p[\epsilon]/(\epsilon^n))$ of $\rhob_{c_0}$ such that $\det(\rho) = \omega_p^{k-1}$, $\tr(\rho(g))=2$ for all $g \in I_{\ell}$ and
the set $\{\tr(\rho(g)) \mid g \in G_{\QQ,p\ell}\}$ generates $\FF_p[\epsilon]/(\epsilon^n)$ as an $\FF_p$-algebra.
\end{corB}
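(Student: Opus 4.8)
The plan is to read this off from part~\eqref{item3} of Theorem~\ref{thmb}. Write $R := R^{\defo,\ord}_{\rhob_{c_0},k}(\ell)$. The conditions imposed on $\rho$ in the statement --- that $\rho$ be an ordinary deformation of $\rhob_{c_0}$ with $\det(\rho) = \omega_p^{k-1}$ and $\tr(\rho(g)) = 2$ for all $g \in I_{\ell}$ --- are exactly those cutting out $R$ as a quotient of the universal ordinary, fixed-determinant deformation ring of $\rhob_{c_0}$ (see \S\ref{defsec}). Since $\FF_p[\epsilon]/(\epsilon^n)$ is an Artinian local $\ZZ_p$-algebra with residue field $\FF_p$, the universal property of $R$ furnishes, for each $n$, a bijection between such deformations $\rho : G_{\QQ,p\ell} \to \GL_2(\FF_p[\epsilon]/(\epsilon^n))$ and local $\ZZ_p$-algebra homomorphisms $R \to \FF_p[\epsilon]/(\epsilon^n)$, under which $\rho$ induces a surjection precisely when the corresponding homomorphism is surjective. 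By Theorem~\ref{thmb}\eqref{item3}, $\phi_{\TT^0}$ identifies $R$ with $\TT^0_{\fm}$, so the integer to be computed is the largest $n$ admitting a surjective local $\ZZ_p$-algebra homomorphism $\TT^0_{\fm} \twoheadrightarrow \FF_p[\epsilon]/(\epsilon^n)$.

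Next I would establish the upper bound. Put $d := \rank_{\ZZ_p}(\TT^0_{\fm})$. The algebra $\TT^0_{\fm}$ is reduced --- it acts faithfully on a space of cusp forms, hence injects into a finite product of characteristic-zero fields --- and is finite and torsion-free over $\ZZ_p$, hence finite flat of rank $d$; thus $\dim_{\FF_p}(\TT^0_{\fm}/p\TT^0_{\fm}) = d$. Any local homomorphism $\TT^0_{\fm} \to \FF_p[\epsilon]/(\epsilon^n)$ annihilates $p$, so it factors through $\TT^0_{\fm}/p\TT^0_{\fm}$; if it is surjective then $n = \dim_{\FF_p}(\FF_p[\epsilon]/(\epsilon^n)) \le d$.

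For the matching lower bound I would show that $\TT^0_{\fm}$ is generated over $\ZZ_p$ by a single element, forcing $\TT^0_{\fm}/p\TT^0_{\fm} \cong \FF_p[\epsilon]/(\epsilon^d)$. Since $c_0 \cup b_0 \neq 0$, the Eisenstein ideal $I^0_{\fm} \subseteq \TT^0_{\fm}$ is principal, say $I^0_{\fm} = (\theta)$ (recorded in the discussion following Theorem~\ref{thma}, and implicit in Theorem~\ref{thmb}\eqref{item3}). The quotient $\TT^0_{\fm}/I^0_{\fm}$ is a cyclic $\ZZ_p$-module, since $\TT^0_{\fm}$ is generated over $\ZZ_p$ by Hecke operators, each congruent modulo $I^0_{\fm}$ to its integral Eisenstein eigenvalue; hence $\TT^0_{\fm} = \ZZ_p\cdot 1 + \theta\,\TT^0_{\fm}$. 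Iterating this gives $\TT^0_{\fm} = \ZZ_p[\theta] + \theta^m\TT^0_{\fm}$ for every $m$, and since $\theta$ is nilpotent in the Artinian local ring $\TT^0_{\fm}/p\TT^0_{\fm}$ we have $\theta^N \in p\TT^0_{\fm}$ for some $N$, so $\TT^0_{\fm} = \ZZ_p[\theta] + p^m\TT^0_{\fm}$ for all $m$; thus $\TT^0_{\fm}/\ZZ_p[\theta]$ is a finitely generated $p$-divisible $\ZZ_p$-module, hence $0$, i.e. $\TT^0_{\fm} = \ZZ_p[\theta]$. Then $\TT^0_{\fm}/p\TT^0_{\fm}$ is a monogenic Artinian local $\FF_p$-algebra of length $d$, so $\TT^0_{\fm}/p\TT^0_{\fm} \cong \FF_p[x]/(x^d) \cong \FF_p[\epsilon]/(\epsilon^d)$, and the composite $\TT^0_{\fm} \twoheadrightarrow \TT^0_{\fm}/p\TT^0_{\fm} \cong \FF_p[\epsilon]/(\epsilon^d)$ realizes $n = d$. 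Combining the two bounds, the largest admissible $n$ equals $d = \rank_{\ZZ_p}(\TT^0_{\fm})$.

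The substantive input is Theorem~\ref{thmb}\eqref{item3}; the one point requiring care is that the $R = \TT$ isomorphism by itself only shows $\TT^0_{\fm}$ is a local complete intersection, whereas forcing the largest $n$ to be exactly $d$ --- rather than something strictly smaller, as already happens for $\FF_p[x,y]/(x,y)^2$ --- uses the stronger fact that $\TT^0_{\fm}$ is monogenic over $\ZZ_p$, which is why the principality of the Eisenstein ideal, and not merely Theorem~\ref{thmb}, enters the argument.
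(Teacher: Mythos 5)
Your proposal is correct, and it takes the route the paper intends: the corollary is stated as a direct consequence of part~\eqref{item3} of Theorem~\ref{thmb}, and no further proof is written out in the paper, so the real content of your write-up is the supplementary structural fact you supply, namely that $\TT^0_{\fm}$ is monogenic over $\ZZ_p$, which is exactly what is needed to pass from ``$R^{\defo,\ord}_{\rhob_{c_0},k}(\ell)\simeq\TT^0_{\fm}$ finite flat of rank $d$'' to ``$\TT^0_{\fm}/p\,\TT^0_{\fm}\simeq\FF_p[\epsilon]/(\epsilon^d)$''. Your translation of the deformation-theoretic statement into the existence of a surjection $\TT^0_{\fm}\twoheadrightarrow\FF_p[\epsilon]/(\epsilon^n)$ via the universal property, the upper bound $n\le d$ from finite flatness, and the identification of the lower bound as the genuinely non-formal step (your $\FF_p[x,y]/(x,y)^2$ caveat is exactly the right worry) are all sound. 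The one remark worth making is that your derivation of monogenicity --- principality of $I^{\eis,0}$ plus cyclicity of $\TT^0_{\fm}/I^{\eis,0}$ as a $\ZZ_p$-module, followed by the iteration $\TT^0_{\fm}=\ZZ_p[\theta]+\theta^m\TT^0_{\fm}$ and Nakayama --- although correct, can be short-circuited using what the paper already proves inside Theorem~\ref{thmb}\eqref{item3}: there the maximal ideal of $R^{\defo,\ord}_{\rhob_{c_0},k}(\ell)$ is shown to be generated by $p$ and $x$, giving a surjection $\ZZ_p[[X]]\to R^{\defo,\ord}_{\rhob_{c_0},k}(\ell)\simeq\TT^0_{\fm}$, from which $\TT^0_{\fm}/p$ is a quotient of $\FF_p[[X]]$ and hence automatically of the form $\FF_p[X]/(X^d)$ by the dimension count. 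So your argument is a self-contained replacement for an ingredient the paper's own proof of the $R=\TT$ theorem already furnishes; either way the conclusion and the essential input (Theorem~\ref{thmb}\eqref{item3} together with principality of the Eisenstein ideal when $c_0\cup b_0\neq 0$) are the same.
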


\subsection{Sketch of the proofs of main results}
We will now give a brief outline of the proof of Theorem~\ref{thma}.
We first analyze the space of deformations $(t,d) : G_{\QQ,p\ell} \to \FF_p[\epsilon]/(\epsilon^2)$ of $(\tr(\rhob_0),\det(\rhob_0))$ which are $p$-ordinary and $\ell$-unipotent with determinant $\chi_p^{k-1}$ to obtain its properties.
To be precise, we prove that the space of such deformations has dimension either $1$ or $2$ (see Lemma~\ref{tanlem}) and this space is one dimensional if $c_0 \cup b_0 \neq 0$ (see Lemma~\ref{cuplem}).

So we split the proof of Theorem~\ref{thma} in two cases: in the first case, we assume either $k=2$ or $c_0 \cup b_0 \neq 0$ and in the second case, we assume $k > 2$ and $c_0 \cup b_0 = 0$.
In the first case, we know that the tangent space of $\TT_{\fm}/(p)$ has dimension $1$ and hence, its Eisenstein ideal is principal.
We then prove that in this case, all the first order deformations of $(\tr(\rhob_0),\det(\rhob_0))$ arising from $\TT_{\fm}$ are reducible.
From Lemma~\ref{infinlem}, we know that these reducible pseudo-representations arise from actual representations if and only if $c_0 \cup a_0 = 0$.
On the Hecke side, we know, from Lemma~\ref{ordlem}, that the pseudo-representation $(\tau^0_{\ell},\delta^0_{\ell})  : G_{\QQ,p\ell} \to \TT^0_{\fm}$ arises from an ordinary deformation $\rho_{\TT^0} : G_{\QQ,p\ell} \to \GL_2(\TT^0_{\fm})$ of $\rhob_{c_0}$.
Therefore, combining these two facts we see that if $\rank_{\ZZ_p}(\TT^0_{\fm}) > 1$ then $c_0 \cup a_0 =0$.

On the other hand suppose $c_0 \cup a_0 =0$. Let $\phi : R^{\ps,\ord}_{\rhob_0,k}(\ell) \to R^{\defo,\ord}_{\rhob_{c_0},k}(\ell)$ be the map induced by the universal deformation taking values in $R^{\defo,\ord}_{\rhob_{c_0},k}(\ell)$ and $F : \TT_{\fm} \to \TT^0_{\fm}$ be the natural surjective map.
Then we prove, using Lemma~\ref{ordlem}, that $\phi_{\TT}(\ker(\phi)) \subset \ker(F)$ and Lemma~\ref{infinlem} implies that $\phi_{\TT}(\ker(\phi)) \subset (p,\fm^2)$.
Combining these facts along with the principality of the Eisenstein ideal, \cite[Theorem $5.1.2$]{W} and \cite[Proposition II.$9.6$]{M2} (which give the index of Eisenstein ideal in $\TT^0_{\fm}$), we show that $\rank_{\ZZ_p}(\TT^0_{\fm}) > 1$ which proves Theorem~\ref{thma} in the first case.

In the second case, we split the proof of Theorem~\ref{thma} in two steps. In the first step we prove that $\rank_{\ZZ_p}(\TT^0_{\fm}) > 1$ if $c_0 \cup b_0 =0$ and $p \mid k$ (Theorem~\ref{thm2}).
To prove this, we use the relation between the tame inertia group and the Frobenius, techniques from Generalized Matrix Algebras (GMA) along with \cite[Theorem $5.1.2$]{W} and \cite[Theorem $5.1.1$]{W} (which describes the biggest quotient of $\TT_{\fm}$ in which $(\tau_{\ell},\delta_{\ell})$ is reducible) to prove that the Eisenstein ideal is not principal.
To prove $\rank_{\ZZ_p}(\TT^0_{\fm}) > 1$ when $c_0 \cup b_0 =0$ and $p \nmid k$, we combine Theorem~\ref{thm2} and a result of Jochnowitz (\cite{J}) about finiteness of the space of $p$-ordinary modular forms modulo $p$.
Indeed, the result of Jochnowitz, along with some standard duality results, implies that the $\ZZ_p$-rank of $\TT^0_{\fm}$ is same as the $\ZZ_p$-rank of the corresponding Hecke algebra of weight $k'$ for any $k' > k$ such that $k' \equiv k \pmod{p-1}$. After taking such a $k'$ with $p \mid k'$, we use Theorem~\ref{thm2} to prove the result.

\subsection{Structure of the paper}
In \S\ref{defsec}, we define various deformation rings that we will be working with throughout the article.
In \S\ref{prelimsec}, we gather several preliminary results from deformation theory which will be used crucially in the proof of main theorems.
In \S\ref{heckesec}, we define the Hecke algebras that we will be working with and gather their properties.
In \S\ref{mainsec}, we state and prove the main theorems of this article and also state and prove their corollaries.

\subsection{Notation}
\label{notsec}
In this subsection, we will develop some notation that will be used in the rest of the article.
Recall that we denoted the absolute Galois groups of $\QQ_p$ and $\QQ_{\ell}$ by $G_{\QQ_p}$ and $G_{\QQ_{\ell}}$, respectively and their inertia groups by $I_p$ and $I_{\ell}$, respectively.
Denote the Frobenius at $\ell$ by $\frob_{\ell}$.
Fix embeddings $i_{\ell} : G_{\QQ_{\ell}} \to G_{\QQ,p\ell}$ and $i_{p} : G_{\QQ_{p}} \to G_{\QQ,p\ell}$. Note that such embeddings are well defined upto conjugacy.
For a representation $\rho$ of $G_{\QQ,p\ell}$, we denote the representation $\rho \circ i_{\ell}$ (resp. $\rho \circ i_{p}$)  by $\rho|_{G_{\QQ_{\ell}}}$ (resp. by $\rho|_{G_{\QQ_{p}}}$) and denote the restriction of $\rho|_{G_{\QQ_{\ell}}}$ (resp. of $\rho|_{G_{\QQ_{p}}}$) to $I_{\ell}$ (resp. to $I_p$) by $\rho|_{I_{\ell}}$ (resp. by $\rho|_{I_p}$).
By abuse of notation, we also denote $\omega_p|_{G_{\QQ_{p}}}$ and $\chi_p|_{G_{\QQ_{p}}}$ by $\omega_p$ and $\chi_p$, respectively.

Now $(\tr(\rhob_0), \det(\rhob_0)) : G_{\QQ,p\ell} \to \FF_p$ is a $2$-dimensional pseudo-representation of $G_{\QQ,p\ell}$ (in the sense of Chenevier (\cite{C})). See \cite[Section $1.4$]{BK} for definition and properties of $2$-dimensional pseudo-representations. 
In this article, we will only consider $2$-dimensional pseudo-representations.
If $(t,d) : G \to R$ is a pseudo-representation and $I$ is an ideal of $R$, then we denote by $(t \pmod{I} , d \pmod{I})$ the pseudo-representation $G \to R/I$ obtained by composing $(t,d)$ with the quotient map $R \to R/I$.
All the representations, pseudo-representations and cohomology groups that we consider are assumed to be continuous unless mentioned otherwise.

If $(t,d) : G_{\QQ,p\ell} \to \FF_p[\epsilon]/(\epsilon^2)$ is a pseudo-representation deforming $(\tr(\rhob_0),\det(\rhob_0))$, then we call it a first order deformation of $(\tr(\rhob_0),\det(\rhob_0))$.
If $\rho : G_{\QQ,p\ell} \to \GL_2(\FF_p[\epsilon]/(\epsilon^2))$ is a deformation of $\rhob_{c_0}$, then we call it a first order deformation of $\rhob_{c_0}$.

If $G$ is either a quotient of a class group of exponent $p$ or a Galois cohomology group of a representation of either a local or a global Galois group, then we denote by $\dim(G)$ the $\FF_p$-dimension of $G$.
If $\rho$ is a representation of $G_{\QQ,p\ell}$ and $c \in H^i(G_{\QQ,p\ell},\rho)$, then we denote by $c|_{G_{\QQ_\ell}}$ the image of $c$ under the restriction map $H^i(G_{\QQ,p\ell},\rho) \to H^1(G_{\QQ_{\ell}},\rho)$. If $c$ and $c'$ are two Galois cohomology classes (either local or global), then denote by $c \cup c'$ their cup product.

Let $\mathcal{C}$ be the category of local complete noetherian rings with residue field $\FF_p$. If $R$ is an object of $\mathcal{C}$, then denote its maximal ideal by $m_R$, denote its tangent space by $\tan(R)$ and denote the $\FF_p$ dimension of $\tan(R)$ by $\dim(\tan(R))$.
By abuse of notation, we denote the character $G_{\QQ,p\ell} \to R^{\times}$ obtained by composing $\chi_p$ with the natural map $\ZZ_p^{\times} \to R^{\times}$ either by $\chi_p$ if $p \neq 0$ in $R$ or by $\omega_p$ if $p=0$ in $R$.

Let $\nu$ be the highest power of $p$ dividing $\ell-1$ and $v_p(k)$ be the highest power of $p$ dividing $k$ (i.e. the $p$-valuation of $k$).

\subsection{Acknowledgments} 
I would like to thank Preston Wake, Frank Calegari and Carl Wang-Erickson for providing some helpful comments on an earlier draft of this article.
I would like to thank Mahesh Kakde and Bharathwaj Palvannan for some helpful discussions about the numerical criterion of Corollary~\ref{main}.
I would also like to thank the referee for a careful reading of the article and for providing numerous comments which helped immensely in improving the exposition.

\section{Deformation rings}
\label{defsec}
Let $R^{\ps}_{\rhob_0}$ be the universal deformation ring of the pseudo-representation $(\tr(\rhob_0), \det(\rhob_0)) : G_{\QQ,p\ell} \to \FF_p$ in $\mathcal{C}$. Note that the existence of $R^{\ps}_{\rhob_0}$ is proved in \cite{C}.
Let $(T^{\univ}, D^{\univ}) : G_{\QQ,p\ell} \to R^{\ps}_{\rhob_0}$ be the universal pseudo-representation deforming $(\tr(\rhob_0),\det(\rhob_0))$.
We will now define the deformation problems (and their deformation rings) that we will be working with.

\begin{defi}
\label{orddefi}
Given an object $R$ of $\mathcal{C}$, a pseudo-representation $(t,d) : G_{\QQ,p\ell} \to R$ is called a $p$-ordinary, $\ell$-unipotent deformation of $(\tr(\rhob_0), \det(\rhob_0))$ with determinant $\chi_p^{k-1}$ if the following conditions hold:
\begin{enumerate}
\item $(t,d) \pmod{m_R} = (\tr(\rhob_0),\det(\rhob_0))$,
\item $d : G_{\QQ,p\ell} \to R^{\times}$ is the character $\chi_{p}^{k-1}$,
\item $t(g)=2$ for all $g \in I_{\ell}$, 
\item\label{item5} For all $g' \in G_{\QQ,p\ell}$ and $g, h \in I_p$,
$t(g'(g-\chi_{p}^{k-1}(g))(h-1)) = 0.$
\end{enumerate}
\end{defi}

Let $R^{\ps,\ord}_{\rhob_0,k}(\ell)$ be the object of $\mathcal{C}$ representing the functor from $\mathcal{C}$ to the category of sets which sends an object $R$ of $\mathcal{C}$ to the set of $p$-ordinary, $\ell$-unipotent pseudo-representations $(t,d) : G_{\QQ,p\ell} \to R$ with determinant $\chi_p^{k-1}$ deforming $(\tr(\rhob_0),\det(\rhob_0))$.

It is easy to verify that $R^{\ps,\ord}_{\rhob_0,k}(\ell)$ exists and it is given by the quotient of $R^{\ps}_{\rhob_0}$ by the ideal $I_k$ generated by the set 
\begin{multline}
\{D^{\univ}(g) - \chi_{p}^{k-1}(g) \mid g \in G_{\QQ,p\ell}\} \cup \{T^{\univ}(g)-2 \mid g \in I_{\ell}\} \\ \cup \{T^{\univ}(g'(g-\chi_{p}^{k-1}(g))(h-1)) \mid g' \in G_{\QQ,p\ell}, g,h \in I_p\}.
\end{multline}

Note that our notion of $p$-ordinariness, given by point~\eqref{item5} of Definition~\ref{orddefi}, is inspired from the notion of ordinary pseudo-representations defined by Calegari and Specter (\cite[Definition $2.5$]{CS}). But we have slightly changed their notion to make it suitable for our purpose.

\begin{rem}
The auxiliary parameter $\alpha$ appearing in the definition of the $p$-ordinary pseudo-representations in \cite[Definition $2.5$]{CS} is required to account for the presence of the Hecke operator $T_p$ in the Hecke algebra, especially in the non-$p$-distinguished case.
But we are assuming that $k$ is even which means that $1 \neq \omega_p^{k-1}$ i.e. we are in the $p$-distinguished case. So we do not need this auxiliary parameter.
\end{rem}

\begin{defi}
\label{stdefi}
Given an object $R$ of $\mathcal{C}$, a $p$-ordinary, $\ell$-unipotent deformation $(t,d) : G_{\QQ,p\ell} \to R$ of $(\tr(\rhob_0), \det(\rhob_0))$ with determinant $\chi_p^{k-1}$ is called Steinberg-or-unramified at $\ell$ if for any lift $g_{\ell} \in G_{\QQ_{\ell}}$ of $\frob_{\ell}$, we have $$t(g(g_{\ell}-\ell^{\frac{k}{2}})(h-1))=0,$$ for every $h \in I_{\ell}$ and $g \in G_{\QQ,p\ell}$.
\end{defi}

Let $R^{\ps,\st}_{\rhob_0,k}(\ell)$ be the object of $\mathcal{C}$ representing the functor from $\mathcal{C}$ to the category of sets which sends an object $R$ of $\mathcal{C}$ to the set of $p$-ordinary, Steinberg-or-unramified at $\ell$ pseudo-representations $(t,d) : G_{\QQ,p\ell} \to R$ with determinant $\chi_p^{k-1}$ deforming $(\tr(\rhob_0),\det(\rhob_0))$.
It is easy to verify that $R^{\ps,\st}_{\rhob_0,k}(\ell)$ exists and it is given by the quotient of $R^{\ps}_{\rhob_0}$ by the ideal $J_k$ generated by the ideal $I_k$ along with the set $$\{T^{\univ}(g(g_{\ell}-\ell^{\frac{k}{2}})(h-1)) \mid g_{\ell} \in G_{\QQ_{\ell}} \text{ is a lift of } \frob_{\ell}, h \in I_{\ell}, g \in G_{\QQ,p\ell}\}.$$
Note that our notion of a Steinberg-or-unramified at $\ell$ pseudo-representation is inspired from the Steinberg at $\ell$ condition defined and studied by Wake and Wang-Erickson in \cite[Section $1.9.1$]{WWE2}.
In an unpublished version of \cite{CS}, Calegari and Specter also define a similar notion (which they call ordinary at $\ell$ pseudo-representation).

\begin{lem}
\label{cohomlem}
Suppose $p-1 \nmid k$, $k$ is even and the $\omega_p^{1-k}$-component of the $p$-part of the class group of $\QQ(\zeta_p)$ is trivial. Then $\dim(H^1(G_{\QQ,p\ell},\omega_p^{1-k}))=2$ and $\dim(\ker(H^1(G_{\QQ,p\ell},\omega_p^{1-k}) \to H^1(I_p,\omega_p^{1-k})))=1$.
\end{lem}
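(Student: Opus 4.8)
The plan is to compute both quantities via the inflation--restriction (Hochschild--Serre) exact sequence and global Euler characteristic formulas, using the hypotheses to control the pieces. First I would handle $\dim H^1(G_{\QQ,p\ell},\omega_p^{1-k})$. The global Euler characteristic formula for the finite module $\FF_p(\omega_p^{1-k})$ over $G_{\QQ,p\ell}$ gives
\[
\dim H^0 - \dim H^1 + \dim H^2 = -\dim H^0(G_{\RR},\omega_p^{1-k}),
\]
and since $p-1 \nmid k$ we have $\omega_p^{1-k} \neq 1$, so $H^0 = 0$; since $k$ is even, $\omega_p^{1-k}(\text{complex conjugation}) = -1$, so the right-hand side is $-1$. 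Thus $\dim H^1 = 1 + \dim H^2$. By Poitou--Tate duality, $H^2(G_{\QQ,p\ell},\omega_p^{1-k})$ is dual to the Shafarevich--Tate-type group $\Sh^1$ sitting inside $H^1(G_{\QQ,p\ell},\omega_p^{k-1}(1)) = H^1(G_{\QQ,p\ell},\omega_p^{k})$, cut out by local conditions at $p$ and $\ell$; alternatively one relates $H^2$ to the $\omega_p^{1-k}$-component of a class group. The hypothesis that the $\omega_p^{1-k}$-part of the $p$-class group of $\QQ(\zeta_p)$ is trivial is exactly what forces the relevant global contribution to vanish, and one checks the local terms at $\ell$ (using $p \mid \ell - 1$, so $\omega_p$ is unramified at $\ell$ but the prime-to-$p$ cyclotomic behavior is controlled) and at $p$ do not contribute, yielding $\dim H^2 = 1$ and hence $\dim H^1 = 2$. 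I would phrase this via the dual Selmer/class group computation rather than raw $H^2$ to keep it clean.

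Next I would compute $\dim\ker\bigl(H^1(G_{\QQ,p\ell},\omega_p^{1-k}) \to H^1(I_p,\omega_p^{1-k})\bigr)$. Here I would compare the level-$p\ell$ and level-$p$ situations. The key point is that a class unramified at $p$ (i.e. in the kernel of restriction to $I_p$) and automatically unramified outside $p\ell$ is, together with some bookkeeping, essentially a class in $H^1(G_{\QQ,p},\omega_p^{1-k})$ allowed to ramify at $\ell$. Since $\omega_p^{1-k}$ is unramified at $\ell$ and (because $p \mid \ell-1$) the local cohomology $H^1(G_{\QQ_\ell},\omega_p^{1-k})$ and its unramified quotient $H^1(\frob_\ell,\omega_p^{1-k})$ can be explicitly dimension-counted — with $H^0(G_{\QQ_\ell},\omega_p^{1-k}) \neq 0$ precisely when $\omega_p^{1-k}$ is trivial on $G_{\QQ_\ell}$, which by $p \mid \ell-1$ happens iff $(\ell-1) \mid$ something mild, but in general $\omega_p^{1-k}(\frob_\ell) = \ell^{1-k} \bmod p$ which is a nontrivial constraint — I would get a one-dimensional extra contribution coming from ramification at $\ell$. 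Combined with the hypothesis $\dim H^1(G_{\QQ,p},\omega_p^{k-1}) = 1$ (which by a parallel Euler characteristic/Poitou--Tate argument, together with the triviality of the $\omega_p^{1-k}$-class group piece, pins down $\dim H^1(G_{\QQ,p},\omega_p^{1-k}) = 1$ as well — these dual statements being linked by reflection/Leopoldt-type relations), inflation--restriction for $G_{\QQ,p\ell} \to G_{\QQ,p}$ gives
\[
0 \to H^1(G_{\QQ,p},\omega_p^{1-k}) \to \ker\bigl(H^1(G_{\QQ,p\ell},\omega_p^{1-k}) \to H^1(I_p,\omega_p^{1-k})\bigr) \to (\text{local term at } \ell)
\]
and I would argue the map into the $\ell$-local term is zero (or that this kernel equals the inflated copy), giving dimension $1$; alternatively one subtracts the one-dimensional ``ramified at $\ell$'' direction from the two-dimensional $H^1(G_{\QQ,p\ell},\omega_p^{1-k})$ directly.

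I expect the main obstacle to be the careful bookkeeping of the local conditions at $\ell$ and the precise identification of which global cohomology classes become ramified at $\ell$ versus unramified — in particular making rigorous the claim that exactly a one-dimensional space of new classes appears at level $p\ell$ compared to level $p$, and that none of the extra classes is unramified at $p$ beyond the expected one. This requires knowing $H^0(G_{\QQ_\ell},\omega_p^{1-k})$, which depends on whether $\ell^{k-1} \equiv 1 \bmod p$; since $p \mid \ell-1$ we have $\ell \equiv 1 \bmod p$ and hence $\ell^{k-1} \equiv 1 \bmod p$, so $H^0(G_{\QQ_\ell},\omega_p^{1-k}) = \FF_p$ and $\dim H^1(G_{\QQ_\ell},\omega_p^{1-k}) = 2$ with a one-dimensional unramified part — this makes the ``new at $\ell$'' space exactly one-dimensional, which is what the argument needs. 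The rest is assembling the Poitou--Tate nine-term sequence and reading off the two dimension counts; I would present it as two short lemmas (one Euler characteristic computation, one inflation--restriction comparison) to keep the exposition modular.
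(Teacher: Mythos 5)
Your first half is in the right framework and is essentially the paper's Greenberg--Wiles computation repackaged via Poitou--Tate: the global Euler characteristic gives $\dim H^1(G_{\QQ,p\ell},\omega_p^{1-k}) = 1 + \dim H^2(G_{\QQ,p\ell},\omega_p^{1-k})$ (note the right-hand side of the formula is $\dim H^0(G_{\RR},\omega_p^{1-k})-1=-1$, not $-\dim H^0(G_{\RR},\omega_p^{1-k})$, though you use the correct value), and $\dim H^2=1$ provided the everywhere-locally-trivial part vanishes, the single remaining dimension coming from the local term at $\ell$, namely $H^2(G_{\QQ_\ell},\omega_p^{1-k})\cong H^0(G_{\QQ_\ell},\omega_p^{k})^{\vee}\cong\FF_p$ because $\ell\equiv 1\pmod p$ (so your parenthetical claim that the local terms ``do not contribute'' is inconsistent with the target $\dim H^2=1$). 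The genuine gap is the vanishing of the global contribution: asserting that the class-group hypothesis ``is exactly what forces'' it, or appealing to ``reflection/Leopoldt-type relations,'' does not work --- Spiegelung only bounds the $\omega_p^{k}$-eigenspace in terms of the $\omega_p^{1-k}$-eigenspace up to an error term, while the group you must kill lives on the $\omega_p^{k}$ side. What actually works (and is what the paper does) is: the class-group hypothesis makes $H^1(G_{\QQ,p},\omega_p^{1-k})\to H^1(G_{\QQ_p},\omega_p^{1-k})$ injective, the target is $1$-dimensional by the local Euler characteristic (using $p-1\nmid k$), oddness gives the lower bound, so $\dim H^1(G_{\QQ,p},\omega_p^{1-k})=1$; then Greenberg--Wiles with the full local condition at $p$ forces $\ker(H^1(G_{\QQ,p},\omega_p^{k})\to H^1(G_{\QQ_p},\omega_p^{k}))=0$, and a level-$p\ell$ class for $\omega_p^{k}$ that is locally trivial at both $p$ and $\ell$ is unramified at $\ell$, hence inflated from level $p$, hence zero. (Also note the lemma does not assume, and the paper's proof does not use, $\dim H^1(G_{\QQ,p},\omega_p^{k-1})=1$; that hypothesis is not available here.)

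The second half contains a genuine error. Classes inflated from $H^1(G_{\QQ,p},\omega_p^{1-k})$ are unramified at $\ell$, not at $p$; in fact under the hypotheses a level-$p$ class unramified at $p$ is unramified everywhere and dies by the class-group assumption, so the inflated line meets $\ker\bigl(H^1(G_{\QQ,p\ell},\omega_p^{1-k})\to H^1(I_p,\omega_p^{1-k})\bigr)$ only in $0$. Hence your displayed sequence is not exact as written, and both proposed conclusions --- ``the kernel equals the inflated copy,'' or ``subtract the one-dimensional ramified-at-$\ell$ direction'' --- identify the wrong subspace: they compute the kernel of restriction to $I_\ell$, whereas the unramified-at-$p$ kernel is spanned precisely by the class that is \emph{new} (ramified) at $\ell$, i.e.\ the class $c_0$ of the paper. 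The correct count is simpler and is the paper's: the restriction map $H^1(G_{\QQ,p\ell},\omega_p^{1-k})\to H^1(G_{\QQ_p},\omega_p^{1-k})$ has $1$-dimensional target and is nonzero because the inflated line injects into it, so its kernel is exactly $2-1=1$ dimensional; and since $\omega_p^{1-k}|_{I_p}\neq 1$ (automatic, as $p-1$ is even and $k-1$ is odd), any class unramified at $p$ restricts trivially to all of $G_{\QQ_p}$, so this kernel coincides with the kernel of restriction to $I_p$, giving the stated dimension $1$.
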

\begin{proof}
%
As we are assuming that the $\omega_p^{1-k}$-component of the $p$-part of the class group of $\QQ(\zeta_p)$ is trivial, it follows that $\ker(H^1(G_{\QQ,p},\omega_p^{1-k}) \to H^1(G_{\QQ_p},\omega_p^{1-k}))$ is trivial.
As $p-1 \nmid k$, it follows, from local Euler characteristic formula, that $\dim(H^1(G_{\QQ_p},\omega_p^{1-k}))=1$ and hence, $\dim(H^1(G_{\QQ,p},\omega_p^{1-k})) \leq 1$.
As $\omega_p^{1-k}$ is odd, global Euler characteristic formula implies that $\dim(H^1(G_{\QQ,p},\omega_p^{1-k})) \geq 1$. So, we have $\dim(H^1(G_{\QQ,p},\omega_p^{1-k})) = 1$.

Thus, by Greenberg-Wiles formula (\cite[Theorem $2$]{Wa}), we get that $\ker(H^1(G_{\QQ,p},\omega_p^k) \to H^1(G_{\QQ_p},\omega_p^k))$ is trivial.
Therefore, we conclude that $$\ker(H^1(G_{\QQ,p\ell},\omega_p^k) \to H^1(G_{\QQ_p},\omega_p^k) \times H^1(G_{\QQ_{\ell}},\omega_p^k)) = 0.$$
Hence, we get that 
$$\dim(H^1(G_{\QQ,p\ell},\omega_p^{1-k})) = 1+\dim(H^0(G_{\QQ_p},\omega_p^k))+\dim(H^0(G_{\QQ_{\ell}},\omega_p^k))=1+0+1=2.$$

So $1 \leq \dim(\ker(H^1(G_{\QQ,p\ell},\omega_p^{1-k}) \to H^1(G_{\QQ_p},\omega_p^{1-k}))) \leq 2$. Now we can view $H^1(G_{\QQ,p},\omega_p^{1-k})$ as a subgroup of $H^1(G_{\QQ,p\ell},\omega_p^{1-k})$ and we have seen that $\ker(H^1(G_{\QQ,p},\omega_p^{1-k}) \to H^1(G_{\QQ_p},\omega_p^{1-k}))$ is trivial.
Hence, it follows that $\dim(\ker(H^1(G_{\QQ,p\ell},\omega_p^{1-k}) \to H^1(G_{\QQ_p},\omega_p^{1-k})))=1$.
As $\omega_p^{1-k}|_{G_{\QQ_p}} \neq 1$, we see that $$\ker(H^1(G_{\QQ,p\ell},\omega_p^{1-k}) \to H^1(G_{\QQ_p},\omega_p^{1-k})) = \ker(H^1(G_{\QQ,p\ell},\omega_p^{1-k}) \to H^1(I_p,\omega_p^{1-k})).$$
This proves the lemma.
\end{proof}

Recall that we have fixed a generator $c_0 \in \ker(H^1(G_{\QQ,p\ell},\omega_p^{1-k}) \to H^1(I_p,\omega_p^{1-k}))$.
Note that there exists a $g_0 \in I_p$ such that $\omega_p^{k-1}(g_0) \neq 1$.  Fix such a $g_0 \in I_p$.
 Let $\rhob_{c_0} : G_{\QQ,p\ell} \to \GL_2(\FF_p)$ be the representation such that $\rhob_{c_0}(g_0) = \begin{pmatrix} 1 & 0\\ 0 & \omega_p^{k-1}(g_0) \end{pmatrix}$ and $\rhob_{c_0}(g) = \begin{pmatrix} 1 & *\\ 0 & \omega_p^{k-1}(g) \end{pmatrix}$ for all $g \in G_{\QQ,p\ell}$, where $*$ corresponds to $c_0$.

\begin{defi}
\label{ordinarydef}
Let $R^{\defo,\ord}_{\rhob_{c_0},k}$ be the universal ordinary deformation ring (in the sense of Mazur) of $\rhob_{c_0}$ with constant determinant $\chi_p^{k-1}$ in $\mathcal{C}$.
So it represents the functor from $\mathcal{C}$ to the category of sets which sends an object $R$ of $\mathcal{C}$ to the set of equivalence classes of representations $\rho : G_{\QQ,p\ell} \to \GL_2(R)$ such that 
\begin{enumerate}
\item $\rho \pmod{m_R} = \rhob_{c_0}$, 
\item There exists an isomorphism $\rho|_{G_{\QQ_p}} \simeq \begin{pmatrix} \eta_1 & *\\ 0 & \eta_2\end{pmatrix},$ where $\eta_2$ is an unramified character of $G_{\QQ_p}$ lifting the trivial character $1$,
\item $\det(\rho)=\chi_{p}^{k-1}$.
\end{enumerate}
\end{defi}
As $c_0 \neq 0$, the existence of $R^{\defo,\ord}_{\rhob_{c_0},k}$ follows from \cite{M} and \cite{Ra}.

Let $\rho^{\univ} : G_{\QQ,p\ell} \to \GL_2(R^{\defo,\ord}_{\rhob_{c_0},k})$ be the universal ordinary deformation of $\rhob_{c_0}$ with constant determinant $\chi_p^{k-1}$. 
Let $R^{\defo,\ord}_{\rhob_{c_0},k}(\ell) := R^{\defo,\ord}_{\rhob_{c_0},k}/I$, where $I$ is the ideal of $R^{\defo,\ord}_{\rhob_{c_0},k}$ generated by the set $\{\tr(\rho^{univ}(g)) - 2 \mid g \in I_{\ell}\}$.
Let $\rho^{\univ,\ell} : G_{\QQ,p\ell} \to \GL_2(R^{\defo,\ord}_{\rhob_{c_0},k}(\ell))$ be the representation obtained by composing $\rho^{\univ}$ with the natural surjective map $R^{\defo,\ord}_{\rhob_{c_0},k} \to R^{\defo,\ord}_{\rhob_{c_0},k}(\ell)$.

\section{Preliminary results}
\label{prelimsec}
In this section, we gather various preliminary results which will be crucially used in the proofs of the main theorems.
We begin by recalling the notion of Generalized Matrix Algebras (GMAs) from \cite{BC}.

\subsection{Generalized Matrix Algebras}
Let $R$ be a complete noetherian ring with residue field $\FF_p$ and let $A$ be a topological Generalized Matrix Algebra (GMA) over $R$ of type $(1,1)$ as defined in \cite[Section $2.2$, Section $2.3$]{Bel}. 
This means that there exist topological $R$-modules $B$ and $C$ such that $A=\begin{pmatrix} R & B\\ C & R\end{pmatrix}$ i.e. every element of $A$ can be written as $\begin{pmatrix} a & b\\c & d\end{pmatrix}$ with $a,d \in R$, $b \in B$ and $c \in C$ and there exists a continuous morphism $m : B \otimes_R C \to R$ of $R$-modules such that $A$ becomes a (not necessarily commutative) topological $R$-algebra under the addition and multiplication given by: 
\begin{enumerate}
\item Addition: $$\begin{pmatrix} a_1 & b_1\\ c_1 & d_1\end{pmatrix} + \begin{pmatrix} a_2 & b_2\\ c_2 & d_2\end{pmatrix} = \begin{pmatrix} a_1+a_2 & b_1+b_2\\ c_1+c_2 & d_1+d_2\end{pmatrix},$$
\item Multiplication: $$\begin{pmatrix} a_1 & b_1\\ c_1 & d_1\end{pmatrix} . \begin{pmatrix} a_2 & b_2\\ c_2 & d_2\end{pmatrix} = \begin{pmatrix} a_1a_2+m(b_1 \otimes c_2) & a_1b_2+d_2b_1\\ d_1c_2+a_2c_1 & d_1d_2+m(b_2 \otimes c_1)\end{pmatrix}.$$
\end{enumerate}
We refer the reader to \cite[Section $2.2$, Section $2.3$]{Bel} for more details.

From now on, we assume that all the GMAs that we consider are topological GMAs of type $(1,1)$ unless mentioned otherwise. 
\begin{defi}
\label{faithfuldef}
Let $A=\begin{pmatrix} R & B\\ C & R\end{pmatrix}$ be a GMA over $R$ as above.
Keeping the notation developed above, we say that the GMA $A$ is faithful if $m(b \otimes c)=0$ for all $c \in C$ implies that $b=0$ and $m(b \otimes c)=0$ for all $b \in B$ implies that $c=0$.
\end{defi}
Given a GMA $A$ over $R$ with $R$-modules $B$ and $C$ and the multiplication map $m : B \otimes_R C \to R$ as given above, we denote $m(b \otimes c)$ by $bc$ for all $b \in B, c \in C$ and we denote by $BC$ the image of the map $m : B \otimes_R C \to R$.

\subsection{General results}
If $R$ is a complete noetherian local ring with residue field $\FF_p$ and $(t,d) : G_{\QQ,p\ell} \to R$ is a pseudo-representation deforming $(\tr(\rhob_0),\det(\rhob_0))$, then \cite[Lemma $3.8$]{C} implies that the pseudo-representation $(t|_{I_{\ell}},d|_{I_{\ell}}) : I_{\ell} \to R$ (i.e. the restriction of the pseudo-representation $(t,d)$ to $I_{\ell}$) factors through the $\ZZ_p$-quotient of the tame inertia group at $\ell$. Fix a lift $i_{\ell} \in I_{\ell}$ of a topological generator of this $\ZZ_p$-quotient of $I_{\ell}$.

Recall that in \S\ref{defsec}, we have fixed a $g_0 \in I_p$ such that $\omega_p^{k-1}(g_0) \neq 1$.
We will now prove a result relating pseudo-representations with GMAs and establishing various properties of these GMAs. It will be extensively used throughout the article.
\begin{lem}
\label{gmalem}
Suppose $k$ is even, the $\omega_p^{1-k}$-component of the $p$-part of the class group of $\QQ(\zeta_p)$ is trivial and $\dim_{\FF_p}(H^1(G_{\QQ,p},\omega_p^{k-1})) =1$.
Let $R$ be a complete noetherian ring with residue field $\FF_p$ and $(t,d) : G_{\QQ,p\ell} \to R$ be a $p$-ordinary, $\ell$-unipotent pseudo-representation with determinant $\chi_p^{k-1}$ deforming $(\tr(\rhob_0),\det(\rhob_0))$. Then there exists a faithful GMA $A = \begin{pmatrix} R & B \\ C & R\end{pmatrix}$ over $R$ (in the sense of Definition~\ref{faithfuldef} above) and a representations $\rho : G_{\QQ,p\ell} \to A^{\times}$ such that
\begin{enumerate}
\item\label{bhaag1} $\tr(\rho) = t$, $\det(\rho) = d$ and $BC \subset m_R$,
\item\label{bhaag2} If $\rho(g) = \begin{pmatrix} a_g & b_g \\ c_g & d_g\end{pmatrix}$, then $a_g \equiv 1 \pmod{m_R}$ and $d_g \equiv \omega_p^{k-1}(g) \pmod{m_R}$ for all $g \in G_{\QQ,p\ell}$,
\item\label{bhaag3} $\rho(g_0) = \begin{pmatrix} a_0 & 0\\ 0 & d_0\end{pmatrix}$ and $R[\rho(G_{\QQ,p\ell})] =A$,
\item\label{bhaag4} $\rho|_{I_{\ell}}$ factors through the tame $\ZZ_p$-quotient of $I_{\ell}$, $\rho(i_{\ell}) = \begin{pmatrix} 1+x & b_{\ell} \\ c_{\ell} & 1-x \end{pmatrix}$ with $x \in m_R$ and $B=Rb_{\ell}$ i.e. $B$ is generated by $b_{\ell}$ as an $R$-module.
\item\label{bhaag5} There exists $c' \in R$ such that $C$ is generated by the set $\{c_{\ell},c'\}$ as an $R$-module.
\item\label{bhaag6} If $g \in I_p$, then $\rho(g)=\begin{pmatrix} 1 & 0\\c_g & \chi_{p}^{k-1}(g)\end{pmatrix}$.
\end{enumerate}
\end{lem}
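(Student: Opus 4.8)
The plan is to build $(A,\rho)$ from the general theory of Cayley--Hamilton algebras and generalized matrix algebras (\cite{C}, \cite{BC}, \cite{Bel}) and then to deduce \eqref{bhaag2}--\eqref{bhaag6} from the $p$-ordinary and $\ell$-unipotent conditions together with Lemma~\ref{cohomlem} and the hypothesis $\dim H^1(G_{\QQ,p},\omega_p^{k-1})=1$. Since $k$ is even and $p>3$, $k-1$ is odd while $p-1$ is even, so $p-1\nmid k-1$ and $\omega_p^{k-1}\neq 1$; hence $(\tr(\rhob_0),\det(\rhob_0))$ is the pseudo-character of the multiplicity-free representation $1\oplus\omega_p^{k-1}$, and I would take $A$ to be the Cayley--Hamilton quotient of $R[[G_{\QQ,p\ell}]]$ attached to $(t,d)$. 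By the cited theory it is a faithful GMA $\begin{pmatrix}R&B\\C&R\end{pmatrix}$ of type $(1,1)$ carrying $\rho:G_{\QQ,p\ell}\to A^\times$ with $\tr(\rho)=t$, $\det(\rho)=d$, $A=R[\rho(G_{\QQ,p\ell})]$, and $BC\subseteq m_R$ because $(t,d)\bmod m_R$ is reducible; this gives \eqref{bhaag1} and the last assertion of \eqref{bhaag3}. As $\omega_p^{k-1}(g_0)\neq 1$, the unit $\rho(g_0)$ has distinct residual eigenvalues, and I would fix the GMA structure so that $e_1,e_2$ are its spectral idempotents, making $\rho(g_0)$ diagonal and proving \eqref{bhaag3}. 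For \eqref{bhaag2}: since $BC\subseteq m_R$, the maps $g\mapsto a_g$ and $g\mapsto d_g$ are characters modulo $m_R$ with product $\omega_p^{k-1}$ and sum $t\bmod m_R$; as they take the distinct values $1$ and $\omega_p^{k-1}(g_0)$ at $g_0$, they equal $1$ and $\omega_p^{k-1}$, so $a_g\equiv 1$ and $d_g\equiv\omega_p^{k-1}(g)\pmod{m_R}$.

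Next I would prove \eqref{bhaag6}. Fix $g,h\in I_p$ and set $u=(\rho(g)-\chi_p^{k-1}(g))(\rho(h)-1)\in A$. Condition \eqref{item5} of Definition~\ref{orddefi} gives $\tr(\rho(g')u)=0$ for all $g'\in G_{\QQ,p\ell}$, hence $\tr(xu)=0$ for every $x\in A$ by $R$-linearity of $\tr$ and $A=R[\rho(G_{\QQ,p\ell})]$. Pairing with $x=e_1,e_2$ kills the diagonal entries of $u$, and pairing with $x\in e_1Ae_2$ and $x\in e_2Ae_1$, using faithfulness of $A$, kills the off-diagonal entries; so $u=0$. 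Taking $g=g_0$, the diagonal factor $\rho(g_0)-\chi_p^{k-1}(g_0)$ has upper-left entry $\equiv 1-\omega_p^{k-1}(g_0)\neq 0\pmod{m_R}$, hence a unit, so the first row of the relation $u=0$ forces $a_h=1$ and $b_h=0$; then $d_h=\det(\rho(h))=\chi_p^{k-1}(h)$, which is \eqref{bhaag6}.

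Finally I would establish \eqref{bhaag4} and \eqref{bhaag5}. Because $\omega_p^{k-1}$ is trivial on $I_\ell$ and $t|_{I_\ell}=2$, the restrictions to $I_\ell$ of the residual cocycles $g\mapsto\overline{b_g}$ and $g\mapsto\overline{c_g}$ are homomorphisms into $\FF_p$-vector spaces, so factor through the maximal pro-$p$ quotient $\ZZ_p$ of $I_\ell$; combined with \eqref{bhaag2} this shows $\rho\bmod m_R$ is trivial on the kernel $K$ of $I_\ell\twoheadrightarrow\ZZ_p$, so $\rho(K)$ lies in the pro-$p$ group $1+m_RA$ and is therefore trivial ($K$ having no nontrivial pro-$p$ quotient); thus $\rho|_{I_\ell}$ factors through $\ZZ_p$ and is determined by $\rho(i_\ell)$, which has trace $2$ and residual diagonal $(1,1)$, so $\rho(i_\ell)=\begin{pmatrix}1+x&b_\ell\\c_\ell&1-x\end{pmatrix}$ with $x\in m_R$. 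Next, $A=R[\rho(G_{\QQ,p\ell})]$ gives $B=\sum_g Rb_g$ and $C=\sum_g Rc_g$, and by the GMA--cohomology dictionary of \cite{BC} the assignments $g\mapsto\overline{b_g}$ and $g\mapsto\overline{c_g}$ induce injections $\Hom_{\FF_p}(B/m_RB,\FF_p)\hookrightarrow H^1(G_{\QQ,p\ell},\omega_p^{1-k})$ and $\Hom_{\FF_p}(C/m_RC,\FF_p)\hookrightarrow H^1(G_{\QQ,p\ell},\omega_p^{k-1})$. By \eqref{bhaag6}, $b_g=0$ for $g\in I_p$, so the first image lies in $\ker(H^1(G_{\QQ,p\ell},\omega_p^{1-k})\to H^1(I_p,\omega_p^{1-k}))$, which is $1$-dimensional by Lemma~\ref{cohomlem}; since $c_0$ generates this kernel and, by the proof of Lemma~\ref{cohomlem}, $c_0\notin H^1(G_{\QQ,p},\omega_p^{1-k})$ is ramified at $\ell$, the image of $b_\ell$ in $B/m_RB$ is nonzero, so $\overline{b_\ell}$ generates $B/m_RB$ and $B=Rb_\ell$ by Nakayama (trivially so if $B=0$). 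Similarly $H^1(I_\ell,\omega_p^{k-1})$ is $1$-dimensional and the unramified-at-$\ell$ classes form the $1$-dimensional subspace $H^1(G_{\QQ,p},\omega_p^{k-1})$, so $\dim_{\FF_p}C/m_RC\leq 2$ with $C/m_RC$ generated by $\overline{c_\ell}$ together with one further element $\overline{c_{g'}}$; hence $C=Rc_\ell+Rc'$ with $c'=c_{g'}$, which we may regard as an element of $R$ via the embedding $C\hookrightarrow R$, $c\mapsto b_\ell c$ (valid by faithfulness and $B=Rb_\ell$).

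I expect the last step to be the hardest part: pinning down $B$ and $C$ precisely requires the translation between the off-diagonal modules of a faithful GMA and Galois cohomology classes, the ramification analysis of $c_0$ at $\ell$ via Lemma~\ref{cohomlem}, the hypothesis on $H^1(G_{\QQ,p},\omega_p^{k-1})$, and a Nakayama argument; the relation $u=0$ in the second step, obtained from non-degeneracy of the trace form on a faithful GMA, is the other crucial input but is short, and the construction in the first step is essentially bookkeeping on top of the standard GMA machinery.
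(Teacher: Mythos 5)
Your proposal is correct and follows the paper's strategy in outline: existence of the faithful GMA $(A,\rho)$ with properties \eqref{bhaag1}--\eqref{bhaag3} from the Bella\"{i}che(--Chenevier) machinery, part \eqref{bhaag6} from the ordinarity condition at $g_0$ together with non-degeneracy of the trace on a faithful GMA (your ``$u=0$'' argument is the paper's computation with $g_c$ and $e_1$ in a slightly more uniform form), and parts \eqref{bhaag4}--\eqref{bhaag5} from the injections $\Hom(B/m_RB,\FF_p)\hookrightarrow H^1(G_{\QQ,p\ell},\omega_p^{1-k})$, $\Hom(C/m_RC,\FF_p)\hookrightarrow H^1(G_{\QQ,p\ell},\omega_p^{k-1})$ plus Lemma~\ref{cohomlem}, the hypothesis on $H^1(G_{\QQ,p},\omega_p^{k-1})$, and Nakayama. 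Two local steps genuinely differ. For the factorization of $\rho|_{I_\ell}$ through the tame $\ZZ_p$-quotient you argue directly (residual unipotence on $I_\ell$, then $\rho(\ker(I_\ell\to\ZZ_p))\subset 1+m_RA$, a pro-$p$ group), where the paper invokes \cite[Lemma $3.8$]{C} together with faithfulness; your argument is self-contained but tacitly uses that $B$ and $C$ are finitely generated $R$-modules, which is part of the cited machinery. For $B=Rb_\ell$, the paper applies the cohomological injection to $B'=B/Rb_\ell$ and shows its image consists of classes unramified at both $p$ and $\ell$, hence zero by the class-group hypothesis; you instead apply it to $B$ itself, get $\dim(B/m_RB)\le 1$ from Lemma~\ref{cohomlem}, and use that $c_0$ is ramified at $\ell$ to force $\overline{b_\ell}\neq 0$. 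That is valid --- $c_0|_{I_\ell}\neq 0$ does follow from the ingredients of the proof of Lemma~\ref{cohomlem} (an $\ell$-unramified class in that kernel would lie in $H^1(G_{\QQ,p},\omega_p^{1-k})$ and die under restriction to $G_{\QQ_p}$), and the paper uses this fact in the proof of Lemma~\ref{vanishlem} --- but it deserves that one-line justification. Your treatment of \eqref{bhaag5} is the paper's $C'=C/Rc_\ell$ argument in dual form, and your derivation of \eqref{bhaag2} from $BC\subset m_R$ by matching characters is an acceptable substitute for quoting it from the cited proposition. One small imprecision: the Cayley--Hamilton quotient of $R[[G_{\QQ,p\ell}]]$ attached to $(t,d)$ need not itself be faithful as a GMA; one passes to the further quotient by the trace radical (this is what \cite[Proposition $2.4.2$]{Bel} packages), after which everything you write goes through unchanged.
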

\begin{proof}
The existence of $A = \begin{pmatrix} R & B \\ C & R\end{pmatrix}$ and $\rho : G_{\QQ,p\ell} \to A^{\times}$ satisfying parts \eqref{bhaag1}, \eqref{bhaag2} and \eqref{bhaag3} of the lemma follows directly from \cite[Proposition $2.4.2$]{Bel}. 
Moreover it also implies that $B$ and $C$ are finitely generated $R$-modules.
The description of $\rho(i_{\ell})$ follows from the assumption that $t(g)=2$ for all $g \in I_{\ell}$.
Since $(t|_{I_{\ell}},d|_{I_{\ell}})$ factors through the $\ZZ_p$-quotient of the tame inertia group at $\ell$ and $A$ is a faithful GMA, $\rho|_{I_{\ell}}$ also factors through this $\ZZ_p$-quotient of $I_{\ell}$.

If $B=0$ then faithfulness implies that $C=0$ and vice versa. All the parts of the lemma are clearly true in this case. So assume $B \neq 0$ and $C \neq 0$.

Let $h \in I_p$ and let $\rho(h) =\begin{pmatrix} a_h & b_h \\ c_h & d_h \end{pmatrix}$.
As $(t,d)$ is $p$-ordinary, we have for all $g \in G_{\QQ,p\ell}$ 
\begin{equation}\label{ordeq}
\tr(\rho(g)(\rho(g_0)-\chi_{p}^{k-1}(g_0))(\rho(h) -1))=0.
\end{equation}
As $R[\rho(G_{\QQ,p\ell})] =A$, we get that, for all $g' \in A$, $$\tr(g'(\rho(g_0)-\chi_{p}^{k-1}(g_0))(\rho(h) -1))=0.$$
For $c \in C$, let $g_c = \begin{pmatrix} 0 & 0\\ c &0\end{pmatrix} \in A$.
Then $$\tr(g_c(\rho(g_0)-\chi_{p}^{k-1}(g_0))(\rho(h) -1)) = (a_0 - \chi_{p}^{k-1}(g_0))b_hc.$$
As $a_0 \equiv 1 \pmod{m_R}$ and $\omega_p^{k-1}(g_0) \neq 1$, it follows that $a_0 - \chi_{p}^{k-1}(g_0) \in R^{\times}$ and hence, $b_hc=0$ for all $c \in C$.
As $A$ is faithful, we get that $b_h=0$ for all $h \in I_p$.

Taking $g'=\begin{pmatrix} 1 & 0\\ 0 & 0\end{pmatrix}$ in \eqref{ordeq}, we get $(a_0-\chi_{p}^{k-1}(g_0))(a_h-1)=0$ for all $h \in I_p$.
As $a_0 - \chi_{p}^{k-1}(g_0) \in R^{\times}$, we get that $a_h=1$ for all $h \in I_p$.
As $\det(\rho(h))=\chi_{p}^{k-1}(h)$, it follows that $d_h = \chi_{p}^{k-1}(h)$ for all $h \in I_p$.
This proves part~\eqref{bhaag6} of the lemma.

Let $B' := B/Rb_{\ell}$. Suppose $\phi : B'/m_RB' \to \FF_p$ is a map of $R$-modules. Then it induces a map $\phi^* : A \to M_2(\FF)$ of $R$-algebras which sends $\begin{pmatrix} a & b \\ c & d \end{pmatrix}$ to $\begin{pmatrix} a \pmod{m_R} & \phi(b) \\ 0 & d \pmod{m_R} \end{pmatrix}$. So the image of $\phi^*$ defines an element of $H^1(G_{\QQ,p\ell},\omega_p^{1-k})$.

Thus we get a map $f : \Hom(B'/m_RB',\FF_p) \to H^1(G_{\QQ,p\ell},\omega_p^{1-k})$ of $\FF_p$ vector spaces. It is easy to verify, using $R[\rho(G_{\QQ,p\ell})] =A$, that this map is injective (see the proofs of \cite[Theorem $1.5.5$]{BC} and \cite[Lemma $2.5$]{D2} for more details).
Note that if $x \in H^1(G_{\QQ,p\ell},\omega_p^{1-k})$ lies in the image of $f$, then part~\eqref{bhaag6} of the lemma implies that $x$ is unramified at $p$.
As $\rho|_{I_{\ell}}$ factors through the tame $\ZZ_p$-quotient of $I_{\ell}$, the definitions of $i_{\ell}$ and $b_{\ell}$ imply that $x$ is also unramified at $\ell$.
Since we are assuming that the $\omega_p^{1-k}$-component of the $p$-part of the class group of $\QQ(\zeta_p)$ is trivial, it follows that $x=0$. As $f$ is injective, we see that $B'=0$ which means $B=Rb_{\ell}$. This finishes the proof of part~\eqref{bhaag4} of the lemma.

Repeating the argument of the previous paragraph for $C'=C/Rc_{\ell}$, we get an injective map $f' : \Hom(C'/m_RC',\FF_p) \to H^1(G_{\QQ,p\ell},\omega_p^{k-1})$ of $\FF_p$ vector spaces.
Now $\rho|_{I_{\ell}}$ factors through the tame $\ZZ_p$-quotient of $I_{\ell}$.
Therefore, from the definitions of $i_{\ell}$ and $c_{\ell}$, it follows that if $x$ is in the image of $f'$, then $x$ is unramified at $\ell$. Hence, the image of $f'$ lies in $H^1(G_{\QQ,p}, \omega_p^{k-1}) \subset H^1(G_{\QQ,p\ell},\omega_p^{k-1})$. As we are assuming that $\dim(H^1(G_{\QQ,p}, \omega_p^{k-1}))=1$ and $f'$ is injective, we get that $C'$ is either $0$ or it is generated by one element as an $R$-module. This gives us part~\eqref{bhaag5} of the lemma.
\end{proof}

Reducible pseudo-representations deforming $(\tr(\rhob_0),\det(\rhob_0))$ will play an important role in the proofs of our main results.
Their importance is already highlighted in Wake's work (\cite{W}).
We will now prove a basic result about reducible pseudo-representations which is an analogue of \cite[Lemme $1$]{BC1}. Its proof is also similar to that of \cite[Lemme $1$]{BC1}. But we give it here for the benefit of reader.
We will use it extensively while working with reducible pseudo-representations.
\begin{lem}
\label{redlem}
Let $(t,d) :G_{\QQ,p\ell} \to R$ be a pseudo-representation deforming $(\tr(\rhob_0),\det(\rhob_0))$.
Suppose $A = \begin{pmatrix} R & B \\ C & R\end{pmatrix}$ is a (not necessarily faithful) GMA over $R$ and $\rho : G_{\QQ,p\ell} \to A^{\times}$ is a representation such that
\begin{enumerate}
\item $t=\tr(\rho)$ and $d=\det(\rho)$.
\item If $g \in G_{\QQ,p\ell}$ and $\rho(g) = \begin{pmatrix} a_g & b_g \\ c_g & d_g\end{pmatrix}$, then $a_g \equiv 1 \pmod{m_R}$ and $d_g \equiv \omega_p^{k-1}(g) \pmod{m_R}$.
\item $\rho(g_0) = \begin{pmatrix} a_0 & 0\\ 0 & d_0\end{pmatrix}$ and $R[\rho(G_{\QQ,p\ell})] =A$.
\end{enumerate}
Let $I$ be an ideal of $R$. Then $t \pmod{I}=\chi_1+\chi_2$, for some characters $\chi_1, \chi_2 : G_{\QQ,p\ell} \to (R/I)^{\times}$ deforming $1$ and $\omega_p^{k-1}$, if and only if $BC \subset I$.
Moreover, if this condition is satisfied, then $a_g \pmod{I}=\chi_1(g)$ and $d_g \pmod{I}=\chi_2(g)$ for all $g \in G_{\QQ,p\ell}$.
\end{lem}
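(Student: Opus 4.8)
The plan is to prove the two implications of the stated equivalence separately, using the Generalized Matrix Algebra structure. Throughout, write $\rho(g) = \begin{pmatrix} a_g & b_g \\ c_g & d_g \end{pmatrix}$ for $g \in G_{\QQ,p\ell}$, and denote by $\bar{A} = A/(\text{entries in } I)$, meaning the GMA $\begin{pmatrix} R/I & B/IB \\ C/IC & R/I \end{pmatrix}$ with multiplication induced by $m$; note that when $BC \subset I$, this induced multiplication map $\bar{m} : (B/IB) \otimes_{R/I} (C/IC) \to R/I$ is the zero map, so $\bar{A}$ is a GMA with zero product on the off-diagonal.

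For the ``if'' direction, suppose $BC \subset I$. Composing $\rho$ with the reduction $A \to \bar A$ gives a representation $\bar\rho : G_{\QQ,p\ell} \to \bar A^\times$ with $\tr(\bar\rho) = t \pmod I$ and $\det(\bar\rho) = d \pmod I$. I claim the maps $g \mapsto a_g \pmod I$ and $g \mapsto d_g \pmod I$ are multiplicative. Indeed, for $g,h \in G_{\QQ,p\ell}$, the $(1,1)$-entry of $\rho(gh)$ is $a_g a_h + m(b_g \otimes c_h)$, and since $m(b_g \otimes c_h) \in BC \subset I$, we get $a_{gh} \equiv a_g a_h \pmod I$; similarly $d_{gh} \equiv d_g d_h \pmod I$ using $m(b_h \otimes c_g) \in BC \subset I$. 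So $\chi_1 := (a_\bullet \bmod I)$ and $\chi_2 := (d_\bullet \bmod I)$ are characters $G_{\QQ,p\ell} \to (R/I)^\times$ (they land in units since $a_g \equiv 1$, $d_g \equiv \omega_p^{k-1}(g)$ mod $m_R$), deforming $1$ and $\omega_p^{k-1}$ respectively, and $t \pmod I = \chi_1 + \chi_2$. This also proves the last sentence of the lemma.

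For the ``only if'' direction, suppose $t \pmod I = \chi_1 + \chi_2$ with $\chi_1, \chi_2$ deforming $1, \omega_p^{k-1}$. The key tool is that $\rho(g_0) = \begin{pmatrix} a_0 & 0 \\ 0 & d_0 \end{pmatrix}$ is diagonal with $a_0 \equiv 1$, $d_0 \equiv \omega_p^{k-1}(g_0) \pmod{m_R}$, so $a_0 - d_0 \in R^\times$ since $\omega_p^{k-1}(g_0) \neq 1$. I would evaluate the pseudo-representation identity $t(g_0 g) = t(g_0)t(g) - d(g_0)t(g_0^{-1}g)$, or more directly use the Cayley--Hamilton / GMA relations: for any $g$, expanding $\tr(\rho(g_0)\rho(g)) = a_0 a_g + d_0 d_g$ and $\tr(\rho(g_0^{-1}\rho(g)) = a_0^{-1}a_g + d_0^{-1}d_g$ shows that $a_g \bmod I$ and $d_g \bmod I$ are determined by $t \bmod I$ and the scalars $a_0, d_0$: solving the linear system, $a_g \equiv \chi_1(g)$ and $d_g \equiv \chi_2(g) \pmod I$ (matching the reductions mod $m_R$ to pin down which root is which). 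Then multiplicativity of $a_\bullet \bmod I$ forces $a_{gh} \equiv a_g a_h \pmod I$, i.e. $m(b_g \otimes c_h) \in I$ for all $g, h$. Finally, since $R[\rho(G_{\QQ,p\ell})] = A$, the modules $B$ and $C$ are spanned over $R$ by the $b_g$ and $c_h$ (this is exactly the content of the GMA-generation hypothesis, cf. \cite[Proposition 1.3.9, Theorem 1.5.5]{BC}), so $BC$ is spanned by the elements $m(b_g \otimes c_h)$, all of which lie in $I$; hence $BC \subset I$.

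The main obstacle is the ``only if'' direction: one must carefully extract the entries $a_g, d_g$ from the trace modulo $I$, which requires invoking the pseudo-representation relations (not just the trace being a sum of characters) together with the diagonalization at $g_0$, and then one must know that $B$ and $C$ are generated as $R$-modules by the off-diagonal entries of $\rho(G_{\QQ,p\ell})$ — this is precisely why the hypothesis $R[\rho(G_{\QQ,p\ell})] = A$ is included, and it is the step where I would cite the structure theory of GMAs from \cite{BC} (as in \cite[Lemme 1]{BC1}) rather than reprove it.
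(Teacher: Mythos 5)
Your overall strategy is the same as the paper's: the ``if'' direction via multiplicativity of the diagonal entries modulo $I$ when $BC\subset I$, and the ``only if'' direction by exploiting the diagonal matrix $\rho(g_0)$ with $a_0-d_0\in R^{\times}$ to read off $a_g,d_g$ modulo $I$ from traces, then using multiplicativity of $\chi_1$ to force $b_gc_h\in I$ and the hypothesis $R[\rho(G_{\QQ,p\ell})]=A$ (so $B$, $C$ are spanned by the off-diagonal entries) to conclude $BC\subset I$. The ``if'' direction and the final generation step are fine as written.

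Two points in your ``only if'' sketch need repair, though. First, the specific $2\times 2$ system you propose, built from $t(g_0g)=a_0a_g+d_0d_g$ and $t(g_0^{-1}g)=a_0^{-1}a_g+d_0^{-1}d_g$, has determinant $(a_0-d_0)(a_0+d_0)/(a_0d_0)$ up to sign; since $g_0$ is only required to satisfy $\omega_p^{k-1}(g_0)\neq 1$, the value $\omega_p^{k-1}(g_0)=-1$ is allowed (e.g.\ $p=7$, $k=4$, where $\omega_p^{3}(I_p)=\{\pm 1\}$, so every admissible $g_0$ has this property), in which case $a_0+d_0\in m_R$ and your system is singular. The paper instead pairs $t(g)=a_g+d_g$ with $t(g_0g)=a_0a_g+d_0d_g$, whose determinant $d_0-a_0$ is always a unit. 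Second, the parenthetical ``matching the reductions mod $m_R$'' hides a genuine step: solving the (corrected) system only expresses $a_g,d_g \bmod I$ in terms of $\bar a_0,\bar d_0$ and $\bar t$, so to land on $\chi_1(g),\chi_2(g)$ you must first identify $\bar a_0=\chi_1(g_0)$ and $\bar d_0=\chi_2(g_0)$. The paper does this by observing that both pairs are roots of $X^2-\bar t(g_0)X+\bar d(g_0)$, which requires knowing $\bar d=\chi_1\chi_2$, obtained from the pseudo-representation relation $d(g)=\tfrac{t(g)^2-t(g^2)}{2}$ (valid since $p>2$); your sketch never establishes $\bar d=\chi_1\chi_2$. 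With these two repairs your argument coincides with the paper's proof. A minor point: the ``moreover'' clause concerns an arbitrary decomposition $t\equiv\chi_1+\chi_2$, so it is the ``only if'' computation, not the ``if'' direction, that actually yields it.
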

\begin{proof}
It is easy to see, from the description of $\rho(g)$, that if $BC \subset I$, then $\tr(\rho) \pmod{I}=\chi_1+\chi_2$ for some characters $\chi_1$ and $\chi_2$ deforming $1$ and $\omega_p^{k-1}$. Indeed, we can take $\chi_1(g) = a_g \pmod{I}$ and $\chi_2(g)=d_g\pmod{I}$ for all $g \in G_{\QQ,p\ell}$.

Now suppose $\tr(\rho) \pmod{I}$ is a sum of two characters lifting $1$ and $\omega_p^{k-1}$.
If $r \in R$, then denote its image in $R/I$ by $\overline{r}$.
Suppose $g \in G_{\QQ,p\ell}$ and $\rho(g)=\begin{pmatrix} a_g & b_g \\ c_g & d_g\end{pmatrix}$. Then $t(gg_0) = a_0a_g+d_0d_g$.
By our assumption on $I$, we know that \begin{equation}\label{eq:2}\overline{a_0a_g+d_0d_g} = \chi_1(g_0g)+\chi_2(g_0g).\end{equation}
Now $d(g_0)=a_0d_0$ and hence, $a_0$ and $d_0$ are roots of the polynomial $f(X) = X^2-t(g_0)X+d(g_0) \in R[X]$.

Let $\bar f(X) \in R/I[X]$ be the reduction of $f$ modulo $I$.
So $\overline{a_0}$ and $\overline{d_0}$ are the roots of $\bar f(X)$. 
Now as $p >2$, $d(g)=\frac{t(g)^2-t(g^2)}{2}$ (see \cite[Section $1.4$]{BK}).
As $t \pmod{I}=\chi_1+\chi_2$, it follows that $\overline{t(g)}=\chi_1(g)+\chi_2(g)$ and $\overline{d(g)}=\chi_1(g)\chi_2(g)$.
So $\chi_1(g_0)$ and $\chi_2(g_0)$ are also roots of $\bar f(X)$.
Therefore, we get that $\overline{a_0}=\chi_1(g_0)$ and $\overline{d_0}=\chi_2(g_0)$ by matching their reductions modulo the maximal ideal of $R/I$.

Hence, by \eqref{eq:2}, we get $\overline{a_0a_g+d_0d_g} = \chi_1(g)\overline{a_0}+\chi_2(g)\overline{d_0}$.
On the other hand $\overline{a_g+d_g} = \chi_1(g) + \chi_2(g)$.
So, we get $\overline{a_g(d_0-a_0)}=\chi_1(g)\overline{(d_0-a_0)}$ and $\overline{d_g(a_0-d_0)}=\chi_2(g)\overline{(a_0-d_0)}$.
As $a_0-d_0 \in R^{\times}$, we get that $\overline{a_g}=\chi_1(g)$ and $\overline{d_g}=\chi_2(g)$. This proves the second part of the lemma.

Now if $g, g' \in G_{\QQ,p\ell}$, then $\rho(gg')=\rho(g)\rho(g')$. So we get $a_{gg'}=a_ga_{g'}+b_gc_{g'}$ and $d_{gg'}=d_gd_{g'}+c_gb_{g'}$.
From the previous paragraph, we know that $a_{gg'} \equiv a_ga_{g'} \pmod{I}$ and $d_{gg'} \equiv d_gd_{g'} \pmod{I}$.
Hence, for all $g, g' \in G_{\QQ,p\ell}$, we have $b_gc_{g'} \in I$ and $c_gb_{g'}\in I$. So we get $BC \subset I$ which proves the lemma.
\end{proof}

\subsection{First order deformations}
We will now focus on the $p$-ordinary, $\ell$-unipotent pseudo-representations $(t,d) : G_{\QQ,p\ell} \to \FF_p[\epsilon]/(\epsilon^2)$ with determinant $\omega_p^{k-1}$ deforming $(\tr(\rhob_0),\det(\rhob_0))$. 
Note that such pseudo-representations arise from the tangent space of $R^{\ps,\ord}_{\rhob_0,k}(\ell)/(p)$.
We start with determining the possible dimensions of the space of such deformations.
This will be useful in studying the structures of the Hecke algebras of interest and their Eisenstein ideals.

Let $(T,D) : G_{\QQ,p\ell} \to R^{\ps,\ord}_{\rhob_0,k}(\ell)$ be the universal $p$-ordinary, $\ell$-unipotent deformation of $(\tr(\rhob_0),\det(\rhob_0))$ with determinant $\chi_p^{k-1}$.
Recall, from \S\ref{notsec}, that we denote the tangent space of $R^{\ps,\ord}_{\rhob_0,k}(\ell)/(p)$ by $\tan(R^{\ps,\ord}_{\rhob_0,k}(\ell)/(p))$.
\begin{lem}
\label{tanlem}
$1 \leq \dim(\tan(R^{\ps,\ord}_{\rhob_0,k}(\ell)/(p))) \leq 2$.
\end{lem}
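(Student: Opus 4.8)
The plan is to compute $\dim(\tan(R^{\ps,\ord}_{\rhob_0,k}(\ell)/(p)))$ by analysing the reducibility ideal of the universal $p$-ordinary, $\ell$-unipotent pseudo-representation via the Generalized Matrix Algebra of Lemma~\ref{gmalem} and the reducibility criterion of Lemma~\ref{redlem}, and to separately exhibit one nonzero tangent vector by hand so as to rule out dimension $0$. Throughout write $\calR := R^{\ps,\ord}_{\rhob_0,k}(\ell)/(p)$, a complete noetherian local $\FF_p$-algebra whose tangent space is the set of $p$-ordinary, $\ell$-unipotent first order deformations of $(\tr(\rhob_0),\det(\rhob_0))$ with determinant $\chi_p^{k-1}$, and let $(T,D):G_{\QQ,p\ell}\to\calR$ be the universal such deformation.

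For the lower bound I would produce a reducible first order deformation. Viewing the chosen generator $a_0$ of $\ker(H^1(G_{\QQ,p\ell},1)\to H^1(I_p,1))$ as a homomorphism $G_{\QQ,p\ell}\to\FF_p$, put $\psi_1:=1+\epsilon a_0$ and $\psi_2:=\omega_p^{k-1}(1-\epsilon a_0)$; these are characters into $(\FF_p[\epsilon]/(\epsilon^2))^{\times}$ with $\psi_1\psi_2=\omega_p^{k-1}$, so $(t,d):=(\psi_1+\psi_2,\omega_p^{k-1})$ is a first order deformation of $(\tr(\rhob_0),\det(\rhob_0))$ with determinant $\chi_p^{k-1}$. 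It is $\ell$-unipotent because $\omega_p^{k-1}$ is unramified at $\ell$, whence $t(g)=\psi_1(g)+\psi_2(g)=2$ for $g\in I_{\ell}$; and it is $p$-ordinary because $a_0|_{I_p}=0$ forces $\psi_1(h)-1=0$ and $\psi_2(g)-\omega_p^{k-1}(g)=0$ for all $g,h\in I_p$, so each summand of $t(g'(g-\omega_p^{k-1}(g))(h-1))=\psi_1(g')(\psi_1(g)-\omega_p^{k-1}(g))(\psi_1(h)-1)+\psi_2(g')(\psi_2(g)-\omega_p^{k-1}(g))(\psi_2(h)-1)$ vanishes. Its $\epsilon$-part is the function $a_0\cdot(1-\omega_p^{k-1})$, which is not identically zero because $a_0\neq 0$ and $\omega_p^{k-1}\neq 1$ (this is where $p-1\nmid k$ enters), so the deformation is nontrivial and $\dim(\tan(\calR))\geq 1$.

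For the upper bound I would apply Lemma~\ref{gmalem} to $(T,D)$ to obtain a faithful GMA $A=\begin{pmatrix}\calR & B\\ C & \calR\end{pmatrix}$ and $\rho:G_{\QQ,p\ell}\to A^{\times}$ with $\tr(\rho)=T$, $\det(\rho)=D$, $B=\calR b_{\ell}$, $C=\calR c_{\ell}+\calR c'$, $BC\subseteq m_{\calR}$, and $\rho(i_{\ell})=\begin{pmatrix}1+x & b_{\ell}\\ c_{\ell} & 1-x\end{pmatrix}$ with $x\in m_{\calR}$. The crucial observation is that $\det(\rho)=D=\chi_p^{k-1}$ is unramified at $\ell$, so $\det(\rho(i_{\ell}))=1$, which gives $b_{\ell}c_{\ell}=(1+x)(1-x)-1=-x^2\in m_{\calR}^2$. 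Since $BC$ is generated as an $\calR$-module by $b_{\ell}c_{\ell}$ and $b_{\ell}c'$, and the first generator already lies in $m_{\calR}^2$, the image of $BC$ in $m_{\calR}/m_{\calR}^2$ is spanned by the class of $b_{\ell}c'$ alone; hence $\dim\big((BC+m_{\calR}^2)/m_{\calR}^2\big)\leq 1$.

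It remains to account for the reducible locus. By Lemma~\ref{redlem} (whose hypotheses are met by the GMA just constructed), $BC$ is the smallest ideal $I$ of $\calR$ for which $T\bmod I$ is a sum of two characters deforming $1$ and $\omega_p^{k-1}$, so $\calR/BC$ is the universal ring for reducible $p$-ordinary, $\ell$-unipotent deformations with determinant $\chi_p^{k-1}$, and $\tan(\calR/BC)$ is the space of such first order deformations that are reducible. A computation along the lines of the second paragraph (now linearising the $p$-ordinary condition for a sum of characters and using that $\omega_p^{k-1}|_{I_p}\neq 1$) shows such a deformation is determined by a single $u\in\Hom(G_{\QQ,p\ell},\FF_p)$ with $u|_{I_p}=0$, so $\tan(\calR/BC)$ injects into $\ker(H^1(G_{\QQ,p\ell},1)\to H^1(I_p,1))$, which is one-dimensional under our standing hypotheses; in particular $\dim(\tan(\calR/BC))\leq 1$. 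Finally, from the exact sequence of $\FF_p$-vector spaces
$$0\to (BC+m_{\calR}^2)/m_{\calR}^2\to m_{\calR}/m_{\calR}^2\to m_{\calR}/(BC+m_{\calR}^2)\to 0,$$
together with the identification of $m_{\calR}/(BC+m_{\calR}^2)$ with the cotangent space of $\calR/BC$, we get $\dim(\tan(\calR))=\dim(\tan(\calR/BC))+\dim\big((BC+m_{\calR}^2)/m_{\calR}^2\big)\leq 1+1=2$, which together with the lower bound proves the lemma. The step I expect to be the crux is the identity $b_{\ell}c_{\ell}=-x^2\in m_{\calR}^2$: everything else is either the explicit deformation of the second paragraph or formal manipulation of tangent spaces of quotients, but this observation — that the inertia-at-$\ell$ direction contributes nothing to the reducibility ideal modulo $m_{\calR}^2$, precisely because the determinant is unramified at $\ell$ — is what brings the a priori bound $3$ (one from the reducible locus, two from the generators $c_{\ell},c'$ of $C$) down to $2$.
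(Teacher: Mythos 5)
Your proof is correct and follows essentially the same route as the paper: the lower bound via the explicit reducible deformation built from the character unramified at $p$, and the upper bound via the GMA of Lemma~\ref{gmalem}, the reducibility criterion of Lemma~\ref{redlem}, and the key identity $b_{\ell}c_{\ell}=-x^2$ coming from $\det(\rho(i_{\ell}))=1$. The only differences are cosmetic: the paper first passes to $R^{\ps,\ord}_{\rhob_0,k}(\ell)/(p,\fm_0^2)$ and uses faithfulness to kill $c_{\ell}$ (so $BC$ becomes principal), whereas you work in $R^{\ps,\ord}_{\rhob_0,k}(\ell)/(p)$ directly and simply note that $b_{\ell}c_{\ell}$ dies in the cotangent space, which streamlines the case analysis without changing the argument.
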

\begin{proof} 
Suppose  $\eta_1, \eta_2 : G_{\QQ,p\ell} \to (\FF_p[\epsilon]/(\epsilon^2))^{\times}$ are characters such that $\eta_1(I_p)=1$, $\eta_1(i_{\ell})=1+\epsilon$ and $\eta_2=\omega_p^{k-1}\eta_1^{-1}$.
By class field theory and definition of $i_{\ell}$, it follows that such characters exist and are unique.
Note that $\eta_1$ is a deformation of $1$, $\eta_2$ is a deformation of $\omega_p^{k-1}$ and $\eta_1\eta_2=\omega_p^{k-1}$.
It is easy to verify that the pseudo-representation $$(\eta_1+\eta_2, \eta_1\eta_2) : G_{\QQ,p\ell} \to \FF_p[\epsilon]/(\epsilon^2)$$ is a $p$-ordinary, $\ell$-unipotent deformation of $(\tr(\rhob_0),\det(\rhob_0))$ with determinant $\chi_p^{k-1}$.
Hence, we get that $1 \leq \dim(\tan(R^{\ps,\ord}_{\rhob_0,k}(\ell)/(p)))$.

On the other hand, suppose $(t,d) : G_{\QQ,p\ell} \to \FF_p[\epsilon]/(\epsilon^2)$ is a reducible, $p$-ordinary, $\ell$-unipotent deformation of $(\tr(\rhob_0),\det(\rhob_0))$ with determinant $\chi_p^{k-1}$. 
So there exist two characters $\chi_1, \chi_2 : G_{\QQ,p\ell} \to (\FF_p[\epsilon]/(\epsilon^2))^{\times}$ such that $\chi_1$ lifts $1$, $\chi_2$ lifts $\omega_p^{k-1}$ and $\chi_1\chi_2=\omega_p^{k-1}$.
From part~\eqref{bhaag6} of Lemma~\ref{gmalem}, we get that $\chi_1|_{I_p}=1$ and $\chi_2|_{I_p}=\omega_p^{k-1}$.

As $\chi_1$ is unramified at $p$, it follows, from class field theory and definition of $i_{\ell}$, that $\chi_1 = \eta_1^m$ and $\chi_2 =\omega_p^{k-1}\eta_1^{-m}$ for some integer $m \geq 0$.
So in the space of first order deformations of $(\tr(\rhob_0),\det(\rhob_0))$, the deformation $(t,d) = (\eta_1^m + \omega_p^{k-1}\eta_1^{-m}, \omega_p^{k-1})$ lies in the subspace generated by the deformation $(\eta_1+\eta_2,\eta_1\eta_2)$ found above.
So the space of reducible, $p$-ordinary, $\ell$-unipotent, first order deformations of $(\tr(\rhob_0),\det(\rhob_0))$ with determinant $\chi_p^{k-1}$ has dimension $1$. 

Let $\fm_0$ be the maximal ideal of $R^{\ps,\ord}_{\rhob_0,k}(\ell)$ and let $R:=R^{\ps,\ord}_{\rhob_0,k}(\ell)/(p,\fm_0^2)$.
So $\dim(\tan(R)) = \dim(\tan(R^{\ps,\ord}_{\rhob_0,k}(\ell)/(p)))$.
Denote the pseudo-representation $G_{\QQ,p\ell} \to R$ obtained by composing $(T,D)$ with the natural surjective map $R^{\ps,\ord}_{\rhob_0,k}(\ell) \to R$ by $(t,d)$.
Let $A=\begin{pmatrix} R & B\\ C & R\end{pmatrix}$ be the faithful GMA over $R$ and $\rho : G_{\QQ,p\ell} \to A^{\times}$ be the representation associated to $(t,d)$ by Lemma~\ref{gmalem}. If $BC=0$, then $(t,d)$ is reducible. Hence, we conclude, from the discussion above, that $\dim(\tan(R))=1$. Therefore, we get that $\dim(\tan(R^{\ps,\ord}_{\rhob_0,k}(\ell)/(p))) =1$.

Suppose $B \neq 0$ and $C \neq 0$. Now $\rho(i_{\ell}) = \begin{pmatrix}1+x & b_{\ell} \\ c_{\ell} & 1-x\end{pmatrix}$. As $d=\omega_p^{k-1}$, we see that $d(i_{\ell})=1$. This means $\det(\rho(i_{\ell}))=1$ which implies that $b_{\ell}c_{\ell}= -x^2$. As $x \in m_R$ and $m_R^2=0$, we get $b_{\ell}c_{\ell}=0$.
As $A$ is faithful, part~\eqref{bhaag4} of Lemma~\ref{gmalem} implies that $c_{\ell}=0$. 
Hence, by part~\eqref{bhaag5} of Lemma~\ref{gmalem}, it follows that $C$ is generated by $1$ element over $R$. 
Let $c$ be a generator of $C$ as $R$-module and let $x'=b_{\ell}c \in R$.
So $BC=(b_{\ell}c)=(x')$.

By Lemma~\ref{redlem}, we get that $(t \pmod{(x')}, d \pmod{(x')}) : G_{\QQ,p\ell} \to R/(x')$ is a reducible, $p$-ordinary, $\ell$-unipotent deformation of $(\tr(\rhob_0),\det(\rhob_0))$ with determinant $\chi_p^{k-1}$. 
So any first order deformation of $(\tr(\rhob_0),\det(\rhob_0))$ arising from $R/(x')$ is reducible.
Hence, from above, we conclude that $\dim(\tan(R/(x'))) \leq 1$. This implies that $\dim(\tan(R)) \leq 2$. Therefore, we get $\dim(\tan(R^{\ps,\ord}_{\rhob_0,k}(\ell)/(p))) \leq 2$ and the lemma follows.
\end{proof}

The next result will be used in determining the generators of the cotangent spaces of the Hecke algebras.
\begin{lem}
\label{tangenlem}
Let $R$ be a quotient of $R^{\ps,\ord}_{\rhob_0,k}(\ell)$ and $(t,d) : G_{\QQ,p\ell} \to R$ be the pseudo-representation obtained by composing $(T,D)$ with the quotient map $R^{\ps,\ord}_{\rhob_0,k}(\ell) \to R$. 
Let $A$ be the faithful GMA over $R$ and $\rho : G_{\QQ,p\ell} \to A^{\times}$ be the representation attached to $(t,d)$ by Lemma~\ref{gmalem}.
Let $\rho(i_{\ell}) = \begin{pmatrix} 1 + x & b_{\ell} \\ c_{\ell} & 1-x \end{pmatrix}$.
If $\dim(\tan(R/(p)))=1$ and the deformation $(t',d') : G_{\QQ,p\ell} \to R/(p,m_R^2)$ of $(\tr(\rhob_0),\det(\rhob_0))$ obtained by composing $(t,d)$ with the quotient map $R \to R/(p,m_R^2)$ is reducible, then $m_R$ is generated by $p$ and $x$.
\end{lem}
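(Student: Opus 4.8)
The plan is to prove the equivalent statement that $\bar R:=R/(p,x)$ equals $\FF_p$. Since $(t',d')$ is reducible, Lemma~\ref{redlem}, applied to $R$, the GMA $A$, the representation $\rho$ and the ideal $(p,m_R^2)$, gives $BC\subseteq(p,m_R^2)$. Base changing the GMA along $R\to\bar R$, let $\bar A:=A\otimes_R\bar R=\begin{pmatrix}\bar R&\bar B\\ \bar C&\bar R\end{pmatrix}$ (so $\bar B=B\otimes_R\bar R$ and $\bar C=C\otimes_R\bar R$), let $\bar\rho:G_{\QQ,p\ell}\to\bar A^{\times}$ be the reduction of $\rho$, and let $\bar t,\bar d$ denote its trace and determinant. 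Since the image of $(p,m_R^2)$ in $\bar R$ is $m_{\bar R}^2$, we obtain $\bar B\bar C\subseteq m_{\bar R}^2$; moreover $\bar A$ and $\bar\rho$ still satisfy hypotheses (1)--(3) of Lemma~\ref{redlem}, inherited by reduction from parts~\eqref{bhaag1}--\eqref{bhaag3} of Lemma~\ref{gmalem}.

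Next I would apply Lemma~\ref{redlem} a second time, now to $\bar R$, $\bar A$, $\bar\rho$ and the ideal $m_{\bar R}^2$: from $\bar B\bar C\subseteq m_{\bar R}^2$ it produces characters $\chi_1,\chi_2:G_{\QQ,p\ell}\to(\bar R/m_{\bar R}^2)^{\times}$ deforming $1$ and $\omega_p^{k-1}$, with $\bar a_g\equiv\chi_1(g)$ and $\bar d_g\equiv\chi_2(g)\pmod{m_{\bar R}^2}$ for all $g$. The crux is to show $\chi_1=1$. By part~\eqref{bhaag6} of Lemma~\ref{gmalem} we have $a_g=1$ for $g\in I_p$, so $\chi_1|_{I_p}=1$; and since $x$ maps to $0$ in $\bar R$, the $(1,1)$-entry $1+x$ of $\rho(i_\ell)$ maps to $1$, so $\chi_1(i_\ell)=1$. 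Hence $\chi_1$ is a deformation of the trivial character with values in the pro-$p$ group $1+m_{\bar R}/m_{\bar R}^2$, unramified at $p$ (and, $p$ being odd, also at $\infty$), killing $i_\ell$. By Kronecker--Weber the maximal abelian pro-$p$ extension of $\QQ$ unramified outside $\ell$ and $\infty$ is cyclic of $p$-power degree and totally and tamely ramified at $\ell$, so its Galois group is topologically generated by the image of $i_\ell$; therefore $\chi_1=1$, and consequently $\chi_2=\omega_p^{k-1}\chi_1^{-1}=\omega_p^{k-1}$.

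It then follows that $\bar t(g)=\bar a_g+\bar d_g\equiv 1+\omega_p^{k-1}(g)\pmod{m_{\bar R}^2}$ for every $g$, which is a scalar in $\FF_p$. Since $R^{\ps,\ord}_{\rhob_0,k}(\ell)$ is a quotient of $R^{\ps}_{\rhob_0}$, and $R^{\ps}_{\rhob_0}$ is topologically generated over $\ZZ_p$ by the values of its universal trace (a $2$-dimensional pseudo-representation being determined by its trace; cf.\ \cite[Section~$1.4$]{BK}), the ring $\bar R$ is topologically generated over $\FF_p$ by $\{\bar t(g):g\in G_{\QQ,p\ell}\}$. Hence $\bar R=\FF_p+m_{\bar R}^2$, so $m_{\bar R}=m_{\bar R}^2$, and Nakayama's lemma forces $m_{\bar R}=0$, i.e.\ $\bar R=\FF_p$; this is exactly the assertion $m_R=(p,x)$. (The hypothesis $\dim(\tan(R/(p)))=1$ is not needed for this conclusion — if $m_R=(p)$ it holds trivially — but it is the situation in which the lemma will be applied, where it moreover says that $x$ spans the one-dimensional space $m_R/((p)+m_R^2)$.)

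The step I expect to be the main obstacle is the identification $\chi_1=1$: it requires verifying carefully that the reduction of $(A,\rho)$ modulo $(p,x)$ still meets the hypotheses of Lemma~\ref{redlem}, and then invoking the same class field theory input that underlies Lemma~\ref{tanlem} to conclude that a trivial-deformation character killing $i_\ell$ must vanish. The remaining manipulations — tracking the ideals $BC$ and $(p,m_R^2)$ through the quotient $R\to\bar R$ and reducing the GMA multiplication — are routine.
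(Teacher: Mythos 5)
Your proposal is correct, but it reaches the conclusion by a genuinely different route than the paper. The paper uses the hypothesis $\dim(\tan(R/(p)))=1$ to identify $R/(p,m_R^2)$ with $\FF_p[\epsilon]/(\epsilon^2)$, invokes the classification of reducible first-order deformations from the proof of Lemma~\ref{tanlem} to write $\chi_1(i_{\ell})=1+a\epsilon$ with $a\neq 0$, and then reads off from the ``moreover'' clause of Lemma~\ref{redlem} that the image of $x$ generates the cotangent line of $R/(p)$, whence $m_R=(p,x)$. You instead pass to $\bar R=R/(p,x)$: you apply Lemma~\ref{redlem} twice (once to extract $BC\subseteq(p,m_R^2)$ from reducibility, once modulo $(p,x)+m_R^2$), use part~\eqref{bhaag6} of Lemma~\ref{gmalem} and the same class-field-theory input that underlies Lemma~\ref{tanlem} to force $\chi_1=1$, so that the trace becomes $1+\omega_p^{k-1}$ modulo $m_{\bar R}^2$, and then conclude $\bar R=\FF_p$ from generation of the pseudodeformation ring by trace values plus Nakayama. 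What your route buys: it avoids the non-vanishing of $a$ and, as you note, does not actually use the hypothesis $\dim(\tan(R/(p)))=1$. What it costs: you must invoke that $R$ is topologically generated by the values of $t$ --- this is true and standard (it is built into Chenevier's construction of $R^{\ps}_{\rhob_0}$ via the usual universality argument, and the paper's own proof tacitly needs the same fact to see that its $\FF_p[\epsilon]/(\epsilon^2)$-valued deformation is non-trivial, i.e.\ $a\neq 0$), but your parenthetical justification (``a $2$-dimensional pseudo-representation is determined by its trace'') is not the right reason and should be replaced by an appeal to the construction or to the universality argument. Also, the base change of the GMA to $\bar R$ is harmless but unnecessary: you can apply Lemma~\ref{redlem} directly over $R$ with the ideal $(p,x)+m_R^2$, which contains $BC$, and read off $\chi_1(i_{\ell})=1$ from $a_{i_{\ell}}=1+x$ there.
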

\begin{proof}
As $\dim(\tan(R/(p)))=1$, we see that $R/(p,m_R^2) \simeq \FF_p[\epsilon]/(\epsilon^2)$. So $(t',d')$ gives us a non-trivial $\FF_p[\epsilon]/(\epsilon^2)$-valued pseudo-representation. We will now use this identification.
Since $(t',d')$ is reducible, $t' = \chi_1 + \chi_2$, where $\chi_1, \chi_2 : G_{\QQ,p\ell} \to (\FF_p[\epsilon]/(\epsilon^2))^{\times}$ are characters deforming $1$ and $\omega_p^{k-1}$, respectively.
From the proof of Lemma~\ref{tanlem}, we know that $\chi_1|_{I_p} = 1$, $\chi_2|_{I_p}=\omega_p^{k-1}$, $\chi_1(i_{\ell}) = 1 + a\epsilon$ and $\chi_2(i_{\ell}) = 1 - a\epsilon$ for some non-zero $a \in \FF_p$.
From Lemma~\ref{redlem}, we get that the image of $1+x$ in $R/(p,m_R^2) \simeq \FF_p[\epsilon]/(\epsilon^2)$ is $1+a\epsilon$. Therefore the image of $x$ in $R/(p,m_R^2) \simeq \FF_p[\epsilon]/(\epsilon^2)$  generates the ideal $(\epsilon)$.
Hence, we conclude that $m_R$ is generated by $(p,x)$.
\end{proof}

Recall that we have chosen a non-zero generator $b_0$ of $H^1(G_{\QQ,p},\omega_p^{k-1})$ and have denoted the cup product of $c_0$ and $b_0$ by $c_0 \cup b_0$. So $c_0 \cup b_0 \in H^2(G_{\QQ,p\ell},1)$.
We now give a necessary condition, in terms of this cup product, for the existence of a first order deformation of $(\tr(\rhob_0),\det(\rhob_0))$ which is $p$-ordinary, $\ell$-unipotent with determinant $\omega_p^{k-1}$ and is not reducible.
This lemma is the first step towards establishes the link between the principality of the Eisenstein ideal and the non-vanishing of the cup product $c_0 \cup b_0$.
The proof uses techniques similar to the ones used in \cite{B1}.

\begin{lem}
\label{cuplem}
If $\dim(\tan(R^{\ps,\ord}_{\rhob_0,k}(\ell)/(p))) = 2$, then $c_0 \cup b_0 =0$.
\end{lem}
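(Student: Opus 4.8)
The plan is to argue by contrapositive: assuming $c_0 \cup b_0 \neq 0$, I will show that $\dim(\tan(R^{\ps,\ord}_{\rhob_0,k}(\ell)/(p))) = 1$. By Lemma~\ref{tanlem} it suffices to rule out the possibility that the dimension is $2$. Following the set-up in the proof of Lemma~\ref{tanlem}, I would put $R := R^{\ps,\ord}_{\rhob_0,k}(\ell)/(p,\fm_0^2)$, let $(t,d) : G_{\QQ,p\ell} \to R$ be the induced pseudo-representation, and take the faithful GMA $A = \left(\begin{smallmatrix} R & B \\ C & R\end{smallmatrix}\right)$ with representation $\rho : G_{\QQ,p\ell} \to A^{\times}$ furnished by Lemma~\ref{gmalem}. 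If $BC = 0$ the pseudo-representation is reducible and the dimension is $1$ by the argument in Lemma~\ref{tanlem}, so I may assume $B \neq 0$ and $C \neq 0$. As in that proof, since $d = \omega_p^{k-1}$ and $m_R^2 = 0$ we get $b_\ell c_\ell = -x^2 = 0$, hence $c_\ell = 0$ by faithfulness and part~\eqref{bhaag4}, and by part~\eqref{bhaag5} the module $C$ is generated by a single element $c'$; also $B = R b_\ell$ by part~\eqref{bhaag4}.

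Next I would extract the cohomology classes that $\rho$ defines. Reducing $\rho$ modulo $m_R$ and using parts~\eqref{bhaag2}, \eqref{bhaag4}, \eqref{bhaag6} of Lemma~\ref{gmalem}, the upper-right entries $g \mapsto \overline{b_g}$ (suitably normalized) give a class in $H^1(G_{\QQ,p\ell}, \omega_p^{1-k})$ which is unramified at $p$ by part~\eqref{bhaag6} and unramified at $\ell$ since $\rho|_{I_\ell}$ factors through the tame $\ZZ_p$-quotient and $B = Rb_\ell$; hence, by the triviality of the $\omega_p^{1-k}$-component of the class group, this class is a multiple of $c_0$ — in fact it should be exactly (a nonzero multiple of) $c_0$ because $B \neq 0$. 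Similarly the lower-left entries give a class in $H^1(G_{\QQ,p\ell}, \omega_p^{k-1})$ which is unramified at $\ell$ (again because $\rho|_{I_\ell}$ factors through the tame quotient and the coefficient $c_\ell$ of the $I_\ell$-part vanishes), hence lies in $H^1(G_{\QQ,p}, \omega_p^{k-1})$, so it is a multiple of $b_0$; since $C \neq 0$ it is a nonzero multiple of $b_0$. The key mechanism is that the associativity/cocycle relation $\rho(gh) = \rho(g)\rho(h)$ forces the product $m : B \otimes_R C \to R$ together with the two $1$-cocycles to witness a cup product: the obstruction to lifting the reducible mod-$m_R$ pseudo-representation to $R$ along the square-zero extension $m_R$ is governed by $c_0 \cup b_0 \in H^2(G_{\QQ,p\ell}, 1)$. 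Concretely, I would compute, for $g, g' \in G_{\QQ,p\ell}$, that $a_{gg'} - a_g a_{g'} = b_g c_{g'}$, and check that the map $(g,g') \mapsto b_g c_{g'} \in BC \subset m_R$ represents the cup product of the class of $\{b_g\}$ and the class of $\{c_g\}$ inside $H^2(G_{\QQ,p\ell}, 1) \otimes BC$ — this is the technique of \cite{B1} referenced in the excerpt.

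Putting this together: since $BC$ is generated by $b_\ell c'$ and the relevant entries of $\rho$ realize the classes $c_0$ and $b_0$ up to nonzero scalars, nonvanishing of $BC$ would produce a nonzero element of $H^2(G_{\QQ,p\ell}, 1)$ equal to a nonzero multiple of $c_0 \cup b_0$; but under the hypothesis $c_0 \cup b_0 \neq 0$ this is consistent, so instead I run the argument the other way. More precisely: if $\dim(\tan(R)) = 2$ then $R/(p,\fm_0^2)$ is not forced to be reducible, so $BC \neq 0$; choosing coordinates so that $BC = (x')$ with $x' = b_\ell c'$, Lemma~\ref{redlem} gives that $(t,d) \bmod (x')$ is reducible, and the map $f : \Hom(B'/m_R B', \FF_p) \hookrightarrow H^1(G_{\QQ,p\ell},\omega_p^{1-k})$ and its analogue $f'$ identify the residual classes with $c_0$ and $b_0$. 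The vanishing of the degree-two obstruction — which is exactly $c_0 \cup b_0$ up to a nonzero scalar coming from these identifications — is then necessary for $BC$ to be allowed to be nonzero in the square-zero quotient $R$; contrapositively, if $\dim(\tan) = 2$ then $c_0 \cup b_0 = 0$. I expect the main obstacle to be the bookkeeping that pins down the cup-product interpretation of $(g,g') \mapsto b_g c_{g'}$: one must carefully choose the GMA basis, normalize the cocycles representing $c_0$ and $b_0$, verify that the relevant $2$-cocycle is a coboundary iff $BC = 0$, and confirm that the scalar relating it to $c_0 \cup b_0$ is a unit (which uses $B = Rb_\ell$ and $C$ generated by one element, so that $BC$ is a cyclic $R$-module and the pairing is "full rank"). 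The local conditions at $p$ and $\ell$ (part~\eqref{bhaag6} and the tame factorization) are what guarantee the classes land in the one-dimensional spaces spanned by $c_0$ and $b_0$ rather than in larger cohomology groups, and this is where the hypotheses on the class group and on $\dim H^1(G_{\QQ,p},\omega_p^{k-1})$ get used.
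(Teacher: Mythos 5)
Your plan follows essentially the same route as the paper's proof of Lemma~\ref{cuplem}: use Lemma~\ref{gmalem} on a square-zero quotient, invoke the local conditions (part~\eqref{bhaag6}, the tame factorization at $\ell$ together with $c_\ell=0$, the class-group hypothesis and $\dim H^1(G_{\QQ,p},\omega_p^{k-1})=1$) to identify the normalized off-diagonal cocycles with nonzero multiples of $c_0$ and $b_0$, and then read the multiplicativity of the $(1,1)$-entry as a coboundary relation for the cup product. The only cosmetic difference is that the paper runs this on a single non-reducible deformation valued in $\FF_p[\epsilon]/(\epsilon^2)$ (which exists, by the proof of Lemma~\ref{tanlem}, when the tangent space is two dimensional), whereas you work over $R^{\ps,\ord}_{\rhob_0,k}(\ell)/(p,\fm_0^2)$; this is harmless, since $m_R\,BC\subset m_R^2=0$ and faithfulness give $m_RB=m_RC=0$, so $B\cong C\cong\FF_p$, and one composes with a functional on $m_R$ nonvanishing on $BC$. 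One point of logic should be corrected, though: nonvanishing of $BC$ does not ``produce a nonzero element of $H^2$,'' and the relevant $2$-cocycle is not ``a coboundary iff $BC=0$.'' Rather, the relation $a_{gh}-a_g-a_h=\alpha\beta\gamma\,\omega_p^{k-1}(g)c_0(g)b_0(h)$ always exhibits the standard cocycle representing $c_0\cup b_0$ as a coboundary; what $BC\neq 0$ (together with $R[\rho(G_{\QQ,p\ell})]=A$, which makes the maps $f,f'$ injective and the off-diagonal classes nonzero) buys is that the scalar $\alpha\beta\gamma$ is a unit, so the conclusion is precisely $c_0\cup b_0=0$, exactly as in the paper.
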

\begin{proof}
If $\dim(\tan(R^{\ps,\ord}_{\rhob_0,k}(\ell)/(p))) = 2$, then the proof of Lemma~\ref{tanlem} implies that there exists a pseudo-representation $(t,d) : G_{\QQ,p\ell} \to \FF_p[\epsilon]/(\epsilon^2)$ such that $(t,d)$ is a $p$-ordinary, $\ell$-unipotent deformation of $(\tr(\rhob_0),\det(\rhob_0))$ with determinant $\omega_p^{k-1}$ and $(t,d)$ is \emph{not} reducible. Let $(t_0,d_0)$ be such a deformation.

Let $A= \begin{pmatrix} \FF_p[\epsilon]/(\epsilon^2) & B \\ C & \FF_p[\epsilon]/(\epsilon^2) \end{pmatrix}$ be the faithful GMA over $\FF_p[\epsilon]/(\epsilon^2)$ and $\rho : G_{\QQ,p\ell} \to A^{\times}$ be the representation attached to $(t_0,d_0)$ by Lemma~\ref{gmalem}. So both $B \neq 0$ and $C \neq 0$.
Now we know, from part~\eqref{bhaag4} of Lemma~\ref{gmalem}, that $\rho(i_{\ell}) = \begin{pmatrix} 1+x & b_{\ell} \\ c_{\ell} & 1-x\end{pmatrix}$ with $x \in (\epsilon)$.
As $d_0(i_{\ell})=1$, we get that $b_{\ell}c_{\ell}=-x^2=0$.
We also know, from part~\eqref{bhaag4} of Lemma~\ref{gmalem}, that $B = \FF_p[\epsilon]/(\epsilon^2).b_{\ell}$. Since $B \neq 0$, we get that $b_{\ell} \neq 0$ and hence, $c_{\ell}=0$.
So we conclude, from part~\eqref{bhaag5} of Lemma~\ref{gmalem}, that $C$ is also generated by one element over $\FF_p[\epsilon]/(\epsilon^2)$. 

Let $c$ be a generator of $C$.
As $BC \subset (\epsilon)$, we get $\epsilon BC=0$. As $A$ is faithful, we get that $\epsilon B =0$ and $\epsilon C=0$.
Hence, both $B$ and $C$ are isomorphic to $\FF_p$ as $\FF_p[\epsilon]/(\epsilon^2)$-modules.
The choice of generators $b_{\ell}$ for $B$ and $c$ for $C$ identifies both $B$ and $C$ with $\FF_p$.
In particular, if $g \in G_{\QQ,p\ell}$, then $\rho(g) = \begin{pmatrix} 1+a_g\epsilon & b_gb_{\ell} \\ c_gc &\omega_p^{k-1}(g) + d_g\epsilon\end{pmatrix}$ with $a_g, b_g, c_g, d_g \in \FF_p$.

Since $\rho$ is a representation and $\epsilon b_{\ell} = \epsilon c =0$, we get that the map $G_{\QQ,p\ell} \to \FF_p$ sending $g$ to $c_g$ defines an element of $H^1(G_{\QQ,p\ell},\omega_p^{k-1})$ and the map $G_{\QQ,p\ell} \to \FF_p$ sending $g$ to $\omega_p^{1-k}(g)b_g$ defines an element of $H^1(G_{\QQ,p\ell},\omega_p^{1-k})$. As $\FF_p[\epsilon]/(\epsilon^2)[\rho(G_{\QQ,p\ell})] = A$, it is easy to verify (using proof of Lemma ~\ref{gmalem}) that both these elements are non-zero.

Note that part~\eqref{bhaag6} of Lemma~\ref{gmalem} implies that $b_h=0$ for all $h \in I_p$. Hence, the cohomology class defined by $b_g$ lies in $\ker(H^1(G_{\QQ,p\ell},\omega_p^{1-k}) \to H^1(I_p,\omega_p^{1-k}))$. 
Since $c_{\ell}=0$ and $\rho|_{I_{\ell}}$ factors through the tame $\ZZ_p$-quotient of $I_{\ell}$ (part~\eqref{bhaag4} of Lemma~\ref{gmalem}), the definition of $i_{\ell}$ implies that the cohomology class defined by $c_g$ lies in $H^1(G_{\QQ,p},\omega_p^{k-1})$.

Since both these spaces are one dimensional and generated by $c_0$ and $b_0$, respectively, it follows that there exist non-zero elements $\alpha, \beta \in \FF_p$ such that $c_g = \alpha b_0(g)$ and $b_g = \beta\omega_p^{k-1}(g)c_0(g)$ for all $g \in G_{\QQ,p\ell}$. Let $\gamma \in \FF_p$ such that $b_{\ell}c=\gamma\epsilon$. So $\gamma \neq 0$.
Now we have $$1+a_{gh}\epsilon = (1+a_g\epsilon)(1+a_h\epsilon) + b_gc_h\gamma\epsilon = (1+a_g\epsilon)(1+a_h\epsilon) +\alpha\beta\gamma\omega_p^{k-1}(g)c_0(g)b_0(h)\epsilon.$$
 So we have $a_{gh} - a_g - a_h = \alpha\beta\gamma\omega_p^{k-1}(g)c_0(g)b_0(h)$ for all $g, h \in G_{\QQ,p\ell}$.
As $\alpha\beta\gamma \neq 0$, it follows, from the definition of the cup product, that $c_0 \cup b_0 = 0$ (see \cite[Section $2.1$]{SS}).
\end{proof}

We will now give a criteria for the vanishing of the cup product $c_0 \cup b_0$.

\begin{lem}
\label{vanishlem}
Suppose Vandiver's conjecture holds for $p$.
Then $c_0 \cup b_0 = 0$ if and only if $\prod_{j=1}^{p-1}(1-\zeta_p^j)^{j^{k-2}}  \in (\ZZ/\ell\ZZ)^{\times}$ is a $p$-th power in $(\ZZ/\ell\ZZ)^{\times}$, where $\zeta_p \in (\ZZ/\ell\ZZ)^{\times}$ is a primitive $p$-th root of unity.
\end{lem}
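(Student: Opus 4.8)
The plan is to translate the cup product $c_0 \cup b_0 \in H^2(G_{\QQ,p\ell},1)$ into a purely local computation at $\ell$ by invoking Poitou--Tate duality, and then to identify the resulting local pairing explicitly in terms of cyclotomic units. First I would note that since $c_0$ is unramified everywhere (it lies in $\ker(H^1(G_{\QQ,p\ell},\omega_p^{1-k}) \to H^1(I_p,\omega_p^{1-k}))$ and is unramified at $\ell$ by construction via $\rhob_{c_0}$) and $b_0 \in H^1(G_{\QQ,p},\omega_p^{k-1})$ is unramified outside $p$, the cup product $c_0 \cup b_0$, viewed in $H^2(G_{\QQ,p\ell},\ZZ/p\ZZ(0))$, can be detected by its localizations. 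The key point is that $c_0$ is \emph{locally trivial at $p$} (this is exactly the defining property of $c_0$ via Lemma~\ref{cohomlem}), so $(c_0 \cup b_0)|_{G_{\QQ_p}} = 0$; and since $b_0$ is unramified at $\ell$ while the cohomology of $I_\ell$ on $\omega_p^{1-k}$ controls whether $c_0|_{G_{\QQ_\ell}}$ can pair nontrivially, the only surviving local contribution is at $\ell$. Invoking the sum-of-invariants formula from global class field theory, $c_0 \cup b_0 = 0$ if and only if $(c_0 \cup b_0)|_{G_{\QQ_\ell}} = 0$ in $H^2(G_{\QQ_\ell},\ZZ/p\ZZ) \cong \ZZ/p\ZZ$ (using that the only ramified place contributing, $\ell$, carries all the defect).

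Next I would make the local class at $\ell$ explicit. Over $G_{\QQ_\ell}$ we have $\omega_p \equiv 1$ (as $p \mid \ell - 1$), so $\omega_p^{1-k}|_{G_{\QQ_\ell}}$ and $\omega_p^{k-1}|_{G_{\QQ_\ell}}$ are trivial characters, and $H^1(G_{\QQ_\ell},\ZZ/p\ZZ) \cong \Hom(G_{\QQ_\ell}^{\mathrm{ab}},\ZZ/p\ZZ)$ has the standard basis given by the unramified class (dual to $\frob_\ell$) and a ramified class (dual to $\ell$ under the Artin map, i.e., detected on $I_\ell$). The restriction $c_0|_{G_{\QQ_\ell}}$ is the \emph{ramified} class: indeed $c_0$ is unramified at $\ell$ as a class of $G_{\QQ,p\ell}$, but under the cup product structure what matters is its image twisted appropriately — more precisely, $c_0$ cuts out the extension $\QQ(\zeta_\ell^{(p)})$ and its behavior at $\ell$ is governed by the ramification of this degree-$p$ subextension of $\QQ(\zeta_\ell)/\QQ$, which is totally ramified at $\ell$. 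Dually, $b_0|_{G_{\QQ_\ell}} \in H^1(G_{\QQ_\ell},\ZZ/p\ZZ)$ is an unramified class (since $b_0 \in H^1(G_{\QQ,p},\omega_p^{k-1})$ is unramified at $\ell$), so it is a multiple of the class dual to $\frob_\ell$, and the local cup product of a ramified class with an unramified class in $H^2(G_{\QQ_\ell},\ZZ/p\ZZ) \cong \ZZ/p\ZZ$ is given by the tame symbol / local reciprocity, hence is nonzero exactly when $b_0|_{G_{\QQ_\ell}} \neq 0$ (equivalently when the value of $b_0$ on $\frob_\ell$ is nonzero mod $p$).

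The remaining and most delicate step is to compute the class $b_0|_{G_{\QQ_\ell}}$, equivalently its value on $\frob_\ell$, in terms of arithmetic data, and to recognize the answer as $\prod_{j=1}^{p-1}(1-\zeta_p^j)^{j^{k-2}} \bmod \ell$. Here $b_0$ generates $H^1(G_{\QQ,p},\omega_p^{k-1})$; by Kummer theory (twisted by $\omega_p^{k-1}$) and the analysis of cyclotomic units, a generator of this one-dimensional space is explicitly given by a cyclotomic unit in $\QQ(\zeta_p)$, namely (an appropriate power-combination of) $1-\zeta_p$, with the twist by $\omega_p^{k-2}$ producing the weighting by $j^{k-2}$ in the product $\prod_j (1-\zeta_p^j)^{j^{k-2}}$. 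Evaluating the associated Kummer class at $\frob_\ell$ amounts to asking whether this cyclotomic unit is a $p$-th power modulo a prime of $\QQ(\zeta_p)$ above $\ell$ — and since $p \mid \ell - 1$ means $\ZZ/\ell\ZZ$ contains the $p$-th roots of unity $\zeta_p$, reducing the unit modulo such a prime gives precisely the element $\prod_{j=1}^{p-1}(1-\zeta_p^j)^{j^{k-2}} \in (\ZZ/\ell\ZZ)^\times$, with $\zeta_p \in \ZZ/\ell\ZZ$ the image of the global root of unity. Thus $b_0|_{G_{\QQ_\ell}} = 0$ iff this element is a $p$-th power in $(\ZZ/\ell\ZZ)^\times$. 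Combining with the previous paragraph, $c_0 \cup b_0 = 0$ iff $\prod_{j=1}^{p-1}(1-\zeta_p^j)^{j^{k-2}}$ is a $p$-th power mod $\ell$, as claimed. The main obstacle will be pinning down the explicit cyclotomic-unit generator of $H^1(G_{\QQ,p},\omega_p^{k-1})$ with the correct $j^{k-2}$ exponents and verifying that the local pairing at $\ell$ (via the tame symbol and the identification $\zeta_p \in \ZZ/\ell\ZZ$) produces exactly this product rather than a nonzero power of it; a Gauss-sum or Stickelberger-type bookkeeping argument, together with the nondegeneracy of local Tate duality, should close this gap.
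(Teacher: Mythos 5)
Your skeleton follows the paper's proof: reduce $c_0\cup b_0=0$ to the vanishing of the local cup product at $\ell$ (the paper cites \cite[Proposition $2.4.1$]{SS} for exactly this), observe that over $G_{\QQ_{\ell}}$ both coefficient characters are trivial, that $b_0|_{G_{\QQ_{\ell}}}$ is unramified while $c_0|_{G_{\QQ_{\ell}}}$ is ramified, so the local product vanishes iff $b_0|_{G_{\QQ_{\ell}}}=0$, and finally interpret $b_0|_{G_{\QQ_{\ell}}}=0$ as $\ell$ splitting in a Kummer extension generated by a cyclotomic unit. However, your treatment of $c_0$ at $\ell$ is wrong as written and self-contradictory. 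In the first paragraph you assert $c_0$ is unramified at $\ell$ ``by construction''; nothing in the definition of $c_0$ gives this, and if it were true the argument would collapse, since an unramified class cupped with the unramified class $b_0|_{G_{\QQ_{\ell}}}$ is always zero and your criterion would become vacuous. In the second paragraph you instead claim $c_0|_{G_{\QQ_{\ell}}}$ is ramified because ``$c_0$ cuts out $\QQ(\zeta_{\ell}^{(p)})$''; this is also false: the degree-$p$ subfield of $\QQ(\zeta_{\ell})$ is cut out by a class with \emph{trivial} coefficients (that is the role of $a_0$), whereas $c_0$ cuts out a $\ZZ/p\ZZ$-extension of $\QQ(\zeta_p)$ on which $\text{Gal}(\QQ(\zeta_p)/\QQ)$ acts by $\omega_p^{1-k}\neq 1$. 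The argument you actually need is the one in the proof of Lemma~\ref{gmalem}: $c_0$ is unramified at $p$ by definition, so if it were also unramified at $\ell$ it would give an everywhere-unramified class in the $\omega_p^{1-k}$-eigenspace, contradicting the hypothesis that the $\omega_p^{1-k}$-component of the $p$-part of the class group of $\QQ(\zeta_p)$ is trivial; hence $c_0|_{I_{\ell}}\neq 0$.

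Second, the step you yourself call the ``main obstacle'' --- that the field cut out by $b_0$, after adjoining $\zeta_p$, is $\QQ\bigl(\zeta_p,\bigl(\prod_{j=1}^{p-1}(1-\zeta_p^j)^{j^{k-2}}\bigr)^{1/p}\bigr)$ --- is the substantive content of the lemma, and your proposal does not prove it; it is precisely what the paper imports from \cite[Section $5.3$]{SS}, where the hypotheses $\dim(H^1(G_{\QQ,p},\omega_p^{k-1}))=1$ and the class-group vanishing are used. (Your worry about obtaining the product ``rather than a nonzero power of it'' is immaterial: being a $p$-th power in $(\ZZ/\ell\ZZ)^{\times}$ is unchanged under raising to any exponent prime to $p$, and $b_0$ is in any case only determined up to $\FF_p^{\times}$-scaling.) You also skip the short bridge the paper makes explicit: $b_0|_{G_{\QQ_{\ell}}}=0$ iff $\ell$ splits completely in the fixed field $K'$ of $\ker(\rhob_{b_0})$, which, since $p\mid \ell-1$, is equivalent to splitting completely in $K'(\zeta_p)$, and Hensel's lemma then converts ``$p$-th power in $\QQ_{\ell}$'' into ``$p$-th power in $(\ZZ/\ell\ZZ)^{\times}$''. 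So the route is the right one, but as it stands the ramification claim rests on an incorrect justification and the key cyclotomic-unit identification is deferred rather than established.
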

\begin{proof}
By \cite[Proposition $2.4.1$]{SS}, we know that $c_0 \cup b_0 = 0$ if and only if $c_0|_{G_{\QQ_{\ell}}} \cup b_0|_{G_{\QQ_{\ell}}} = 0$.
Note that $c_0|_{G_{\QQ_{\ell}}}, b_0|_{G_{\QQ_{\ell}}} \in H^1(G_{\QQ_{\ell}},1)$. Since $c_0|_{G_{\QQ_{\ell}}}$ is ramified at $\ell$, we see that $c_0|_{G_{\QQ_{\ell}}} \neq 0$. On the other hand, $b_0|_{G_{\QQ_{\ell}}}$ is unramified at $\ell$.
So $c_0|_{G_{\QQ_{\ell}}} \cup b_0|_{G_{\QQ_{\ell}}} = 0$ if and only if $b_0|_{G_{\QQ_{\ell}}} = 0$.

Now let $\rhob_{b_0} : G_{\QQ,p}\to \GL_2(\FF_p)$ be the representation given by $\begin{pmatrix} \omega_p^{k-1} & * \\ 0 & 1\end{pmatrix}$, where $*$ corresponds to $b_0$ and let $K'$ be the extension of $\QQ$ fixed by $\ker(\rhob_{b_0})$.
So $b_0|_{G_{\QQ_{\ell}}} = 0$ if and only if $\ell$ splits completely in $K'$.
As $p \mid \ell-1$, $\ell$ splits completely in $K'$ if and only if $\ell$ splits completely in $K'(\zeta_p)$.

Let $\mathcal{U} = \ZZ[\zeta_p,p^{-1}]^{\times}$. So $\dfrac{\mathcal{U}}{\mathcal{U}^p}$ is a $\text{Gal}(\QQ(\zeta_p)/\QQ)$-module.  
For any integer $i$, let $\dfrac{\mathcal{U}}{\mathcal{U}^p}[\omega_p^i]$ be the $\omega_p^i$-component of $\dfrac{\mathcal{U}}{\mathcal{U}^p}$.
Using the inflation-restriction exact sequence and Kummer theory, we get that $\dfrac{\mathcal{U}}{\mathcal{U}^p}[\omega_p^{2-k}]$ is isomorphic to a subgroup of $H^1(G_{\QQ,p},\omega_p^{k-1})$.
Since Vandiver's conjecture holds for $p$, we know, from Remark~\ref{rem2}, that $\dim(H^1(G_{\QQ,p},\omega_p^{k-1}))=1$ (which is consistent with the hypothesis~\eqref{teen} of Setup~\ref{basic}).
Observe that $\dfrac{\mathcal{U}}{\mathcal{U}^p}[\omega_p^{2-k}]$ is an $\FF_p$-vector space of dimension $1$ and hence, $H^1(G_{\QQ,p},\omega_p^{k-1}) \simeq \dfrac{\mathcal{U}}{\mathcal{U}^p}[\omega_p^{2-k}]$.
We refer the reader to the discussion in \cite[Section $5.3$]{SS} appearing just before \cite[Theorem $5.3.2$]{SS} for more details.

Hence, if $\Xi \in \mathcal{U}$ is an element such that $\Xi \not\in \mathcal{U}^p$ and $g(\Xi) \equiv \Xi^{\omega_p^{2-k}(g)} \pmod{\mathcal{U}^p}$ for all $g \in \text{Gal}(\QQ(\zeta_p)/\QQ)$, then $K'(\zeta_p)$ is obtained by attaching a $p$-th root of $\Xi$ to $\QQ(\zeta_p)$.

Now let $\Xi= \prod_{j=1}^{p-1}(1-\zeta_p^j)^{j^{k-2}}$.
Observe that $g(\Xi) \equiv \Xi^{\omega_p^{2-k}(g)} \pmod{\mathcal{U}^p}$ for all $g \in \text{Gal}(\QQ(\zeta_p)/\QQ)$.
Recall that, by our assumption, Vandiver's conjecture holds for $p$.
Therefore, by combining \cite[Lemma 2.7]{L}, \cite[Lemma 8.1]{Wash} and \cite[Theorem 8.2]{Wash}, we get that the set $\{1-\zeta_p^a \mid a \in \ZZ, 0 < a < p/2\}$ is a $\ZZ$-basis of the free part of $\mathcal{U}$.
As $k$ is even, this implies that $\Xi \in \mathcal{U}\setminus \mathcal{U}^p$.
Hence, we conclude that the extension $K'(\zeta_p)$ is obtained by attaching a $p$-th root of $\prod_{j=1}^{p-1}(1-\zeta_p^j)^{j^{k-2}}$ to $\QQ(\zeta_p)$ (see also \cite[Remark $3.2.1$]{W}).

So $\ell$ splits completely in $K'(\zeta_p)$ if and only if $\prod_{j=1}^{p-1}(1-\zeta_p^j)^{j^{k-2}}$ is a $p$-th power in $\QQ_{\ell}$.
By Hensel's lemma, $\prod_{j=1}^{p-1}(1-\zeta_p^j)^{j^{k-2}}$ is a $p$-th power in $\QQ_{\ell}$ if and only if $\prod_{j=1}^{p-1}(1-\zeta_p^j)^{j^{k-2}}  \in (\ZZ/\ell\ZZ)^{\times}$ is a $p$-th power in $(\ZZ/\ell\ZZ)^{\times}$ which proves the lemma.
\end{proof}

\subsection{Pseudo-representations and representations}
\label{repsubsec}
Let $R^{\defo,\ord}_{\rhob_{c_0},k}(\ell)$ be the deformation ring introduced in Definition~\ref{ordinarydef} and $\rho^{\univ, \ell} : G_{\QQ,p\ell} \to \GL_2(R^{\defo,\ord}_{\rhob_{c_0},k}(\ell))$ be the corresponding deformation.
Note that $(\tr(\rho^{\univ,\ell}),\det(\rho^{\univ,\ell})) : G_{\QQ,p\ell} \to R^{\defo,\ord}_{\rhob_{c_0},k}(\ell)$ is a deformation of $(\tr(\rhob_0),\det(\rhob_0))$.
Note that $\det(\rho^{\univ,\ell}) = \chi_{p}^{k-1}$.

As $\rho^{\univ,\ell}$ is $p$-ordinary, it follows that, under a suitable basis, $\rho^{\univ,\ell}(g) = \begin{pmatrix} \chi_{p}(g)^{k-1} & b_g \\ 0 & 1 \end{pmatrix}$ for all $g \in I_p$.
So $(\rho^{\univ,\ell}(g) - \chi_{p}(g)^{k-1})(\rho^{\univ,\ell}(h)-1) = \begin{pmatrix} 0 & 0 \\ 0 & 0 \end{pmatrix}$ for all $g,h \in I_p$.
Hence, $(\tr(\rho^{\univ,\ell}),\det(\rho^{\univ,\ell}))$ is $p$-ordinary. Moreover, it is $\ell$-unipotent by definition.

Therefore, $(\tr(\rho^{\univ,\ell}),\det(\rho^{\univ,\ell})) $ induces a map $\phi : R^{\ps,\ord}_{\rhob_0,k}(\ell) \to R^{\defo,\ord}_{\rhob_{c_0},k}(\ell)$.

\begin{lem}
\label{surjlem}
The map $\phi : R^{\ps,\ord}_{\rhob_0,k}(\ell) \to R^{\defo,\ord}_{\rhob_{c_0},k}(\ell)$ induced by $(\tr(\rho^{\univ,\ell}),\det(\rho^{\univ,\ell})) $ is surjective.
\end{lem}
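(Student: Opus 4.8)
\emph{Strategy.} Both $R^{\ps,\ord}_{\rhob_0,k}(\ell)$ and $R^{\defo,\ord}_{\rhob_{c_0},k}(\ell)$ are objects of $\mathcal{C}$, so $\phi$ is surjective if and only if it is surjective on cotangent spaces $m/m^2$ (topological Nakayama), equivalently -- dualizing -- if and only if the induced map on tangent spaces
\[
\tan(R^{\defo,\ord}_{\rhob_{c_0},k}(\ell)) \longrightarrow \tan(R^{\ps,\ord}_{\rhob_0,k}(\ell)/(p))
\]
is injective. Under the usual identifications with deformation functors this map sends a first order deformation $\rho : G_{\QQ,p\ell} \to \GL_2(\FF_p[\epsilon]/(\epsilon^2))$ of $\rhob_{c_0}$ which is ordinary, has $\tr(\rho(g))=2$ for $g\in I_{\ell}$ and has determinant $\omega_p^{k-1}$, to the first order deformation $(\tr(\rho),\det(\rho))$ of $(\tr(\rhob_0),\det(\rhob_0))$. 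So it suffices to prove: if the pseudo-representation $(\tr(\rho),\det(\rho))$ is the \emph{constant} one, then $\rho$ is conjugate to $\rhob_{c_0}\otimes_{\FF_p}\FF_p[\epsilon]/(\epsilon^2)$.

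\emph{Analysis of such a $\rho$.} Fix the basis in which $\rhob_{c_0}$ is upper triangular and write $\rho(g)=\rhob_{c_0}(g)+\epsilon M(g)$ with $M(g)=\begin{pmatrix}\alpha_g & \beta_g\\ \gamma_g & \delta_g\end{pmatrix}$; then $M$ is a cocycle and defines a class $[M]\in H^1(G_{\QQ,p\ell},\ad\rhob_{c_0})$. Constancy of $\det$ forces $\alpha_g+\delta_g=0$, so $[M]\in H^1(G_{\QQ,p\ell},\ad^0\rhob_{c_0})$, and constancy of $\tr$ forces the further relation $\tr(\rhob_{c_0}(g)M(g))=0$ for all $g$. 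The complete flag preserved by $\rhob_{c_0}$ gives a $G_{\QQ,p\ell}$-stable filtration $0\subset W_1\subset W_2\subset\ad^0\rhob_{c_0}$ with $W_1\simeq\FF_p(\omega_p^{1-k})$ (strictly upper triangular part), $W_2/W_1\simeq\FF_p$ (diagonal part) and $\ad^0\rhob_{c_0}/W_2\simeq\FF_p(\omega_p^{k-1})$ (strictly lower triangular part). The plan is to show, first, that the image $\overline{M}\in H^1(G_{\QQ,p\ell},\omega_p^{k-1})$ of $[M]$ -- i.e. the class of $g\mapsto\gamma_g$ -- vanishes, so that after a conjugation $\rho$ is globally upper triangular; here one uses that restricting $\tr(\rhob_{c_0}(g)M(g))=0$ to $I_{\ell}$ (where $\rhob_{c_0}$ is unipotent with off-diagonal entry the class $c_0$, which is ramified at $\ell$) forces $\gamma|_{I_{\ell}}=0$, that the $p$-ordinary condition pins down the local structure of $\rho$ at $p$ (the eigencharacter on the ordinary line and the unramified quotient are $\omega_p^{k-1}$ and $1$ exactly, as $p-1\nmid k$), and that an everywhere unramified class for $\omega_p^{k-1}$ is controlled by the triviality of the relevant component of the $p$-part of the class group of $\QQ(\zeta_p)$ from the Setup. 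Second, once $\gamma\equiv0$, the relation $\tr(\rhob_{c_0}(g)M(g))=0$ reads $\alpha_g(\omega_p^{k-1}(g)-1)=0$, and $g\mapsto\alpha_g$ is then a homomorphism $G_{\QQ,p\ell}\to\FF_p$ vanishing on the complement of $\ker(\omega_p^{k-1})$, hence identically; so $M$ is valued in $W_1$ and $\rho$ is upper triangular with extension class $c_0+\epsilon[M]$, $[M]\in H^1(G_{\QQ,p\ell},\omega_p^{1-k})$. Third, the $p$-ordinary condition forces $[M]\in\ker(H^1(G_{\QQ,p\ell},\omega_p^{1-k})\to H^1(I_p,\omega_p^{1-k}))$, which by Lemma~\ref{cohomlem} is the line $\FF_p\cdot c_0$, and an upper triangular first order deformation of $\rhob_{c_0}$ with extension class in $\FF_p\cdot c_0$ is conjugate, via a diagonal matrix, to $\rhob_{c_0}\otimes_{\FF_p}\FF_p[\epsilon]/(\epsilon^2)$. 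This gives the injectivity, and hence the lemma.

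\emph{Main difficulty.} The substantial point is the first step: showing that a constant-pseudo-character, $p$-ordinary, $\ell$-unipotent first order deformation of the non-split $\rhob_{c_0}$ is, after conjugation, globally upper triangular (equivalently $\overline{M}=0$). This is exactly where the precise shapes of the $p$-ordinary and $\ell$-unipotent conditions must be combined with the global cohomological inputs of the Setup (the class group hypothesis and Lemma~\ref{cohomlem}); the remaining two steps are routine bookkeeping with cocycles and with the conjugation action of $1+\epsilon\cdot M_2(\FF_p)$ on first order deformations.
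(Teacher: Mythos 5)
Your overall reduction is the same as the paper's: surjectivity of $\phi$ follows once one shows that a first order, $p$-ordinary deformation of $\rhob_{c_0}$ with determinant $\omega_p^{k-1}$ whose trace is exactly $1+\omega_p^{k-1}$ is conjugate to the trivial deformation, and your steps 2 and 3 (killing the diagonal part once the deformation is upper triangular, then using Lemma~\ref{cohomlem} and a diagonal conjugation) agree with the paper's endgame. The gap is in step 1, which you yourself flag as the substantial point: the inputs you list do not prove $\overline{M}=0$. The $p$-ordinary condition only constrains the ``upper-triangular'' data at $p$: it produces a stable line lifting the $\omega_p^{k-1}$-eigenline with unramified quotient, which controls the restriction to $G_{\QQ_p}$ of the upper-right ($\omega_p^{1-k}$-valued) class and forces the diagonal class to be unramified, but it imposes no condition at all on $\gamma|_{G_{\QQ_p}}$, i.e.\ on $\overline{M}|_{G_{\QQ_p}}$. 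Moreover the Setup assumes the triviality of the $\omega_p^{1-k}$-eigenspace of the class group, not of the $\omega_p^{k-1}$-eigenspace, so ``an everywhere unramified class for $\omega_p^{k-1}$'' is not killed by the hypotheses; all they give is $\dim(H^1(G_{\QQ,p},\omega_p^{k-1}))=1$ with generator $b_0$. Hence from $\gamma|_{I_\ell}=0$ you only get $\overline{M}\in\FF_p\,b_0$, and nothing in your list excludes a nonzero multiple of $b_0$: a representative of $b_0$ inflated from $G_{\QQ,p}$ already has trivial restriction to $I_\ell$, so unramifiedness at $\ell$ cannot rule it out.

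The missing ingredient is the full force of reducibility of the pseudo-representation, i.e.\ constancy of the trace used on \emph{products}, not just the one-variable relations $\tr(M(g))=0$ and $\tr(\rhob_{c_0}(g)^{-1}M(g))=0$. This is exactly how the paper proceeds: after conjugating so that $\rho(g_0)$ is diagonal (\cite[Lemma $3.1$]{D3}), one has $R[\rho(G_{\QQ,p\ell})]=\begin{pmatrix} R & B\\ C & R\end{pmatrix}$ with $R=\FF_p[\epsilon]/(\epsilon^2)$ (\cite[Lemma $2.4.5$]{Bel}); since $\tr(\rho)=1+\omega_p^{k-1}$ is a sum of two characters, Lemma~\ref{redlem} --- whose proof evaluates the trace at elements $g_0g$ and uses that $a_0-d_0$ is a unit --- shows that the diagonal entries are exactly $1$ and $\omega_p^{k-1}$ and that $BC=0$; as $\rhob_{c_0}$ is non-split, $B=R$ contains a unit, so $C=0$ and the lower-left entries vanish identically (not merely as a cohomology class). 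If you prefer to stay in cocycle language, you must reproduce this two-variable input, e.g.\ the identity $u_g\gamma_h+u_h\gamma_g=\alpha_g(\omega_p^{k-1}(h)-1)+\alpha_h(\omega_p^{k-1}(g)-1)$ obtained by expanding $\tr(M(gh))=0$ via the cocycle relation and then specializing $g=g_0$; the local-at-$p$ and class-group considerations you invoke cannot substitute for it. (A minor slip, harmless since you impose both conditions: constancy of the trace gives $\alpha_g+\delta_g=0$, while constancy of the determinant gives $\tr(\rhob_{c_0}(g)^{-1}M(g))=0$, not the other way around.)
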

\begin{proof}
To prove the lemma, it suffices to prove that if $\rho : G_{\QQ,p\ell} \to \GL_2(\FF_p[\epsilon]/(\epsilon^2))$ is an ordinary deformation of $\rhob_{c_0}$ such that $\det(\rho)=\omega_p^{k-1}$ and $\tr(\rho) = 1 + \omega_p^{k-1}$, then $\rho \simeq \rhob_{c_0}$.
Denote $\FF_p[\epsilon]/(\epsilon^2)$ by $R$ and let $\rho$ be such a representation.
By \cite[Lemma $3.1$]{D3}, after replacing $\rho$ by a suitable element in its equivalence class if necessary, we can assume that $\rho(g_0)=\begin{pmatrix}a_0 & 0\\ 0 & d_0\end{pmatrix}$. 
Since $a_0 \neq d_0$, \cite[Lemma $2.4.5$]{Bel} implies that there exist ideals $B, C \subset R$ such that $R[\rho(G_{\QQ,p\ell})] = \begin{pmatrix}R & B\\ C & R\end{pmatrix}$. 

Note that $R[\rho(G_{\QQ,p\ell})]$ is a GMA over $R$ with multiplication of B and C given by multiplication in $R$.
As $\rho \pmod{(\epsilon)} = \rhob_{c_0}$, it follows that $B=R$.
Now $(\tr(\rho),\det(\rho))$ is reducible. Hence, applying Lemma~\ref{redlem} to the GMA $R[\rho(G_{\QQ,p\ell})]$, we get that $BC=0$. As $B=R$, we have $C=0$.
Moreover, Lemma~\ref{redlem} also implies that $\rho(g) = \begin{pmatrix}1 & b_g\\ 0 & \omega_p^{k-1}(g)\end{pmatrix}$ for all $g \in G_{\QQ,p\ell}$. 

So if $f_1,f_2 : G_{\QQ,p\ell} \to \FF_p$ are functions such that $b_g = f_1(g) + \epsilon(f_2(g))$, then $\omega_p^{1-k}f_1, \omega_p^{1-k}f_2 \in H^1(G_{\QQ,p\ell},\omega_p^{1-k})$.
As $\rho$ is $p$-ordinary, it follows that $\rho|_{I_p} \simeq 1 \oplus \omega_p^{k-1}$.
Hence, by changing the basis if necessary, we can assume that $b_h=0$ for all $h \in I_p$.

Thus, we see that $\omega_p^{1-k}f_1, \omega_p^{1-k}f_2 \in \ker(H^1(G_{\QQ,p\ell},\omega_p^{1-k}) \to H^1(I_p,\omega_p^{1-k}))$.
From Lemma~\ref{cohomlem}, we know that $\ker(H^1(G_{\QQ,p\ell},\omega_p^{1-k}) \to H^1(I_p,\omega_p^{1-k}))$ is generated by $c_0$.
Hence, there exist $\alpha, \beta \in \FF_p$ such that $\omega_p^{1-k}f_1 = \alpha c_0$ and $\omega_p^{1-k}f_2 = \beta c_0$.
Note that $\alpha = 1$ as $\rho \pmod{(\epsilon)} = \rhob_{c_0}$.
So conjugating $\rho$ by $\begin{pmatrix} (1+ \epsilon\beta)^{-1} & 0 \\ 0 & 1\end{pmatrix}$ gives us $\rhob_{c_0}$ which proves the lemma.
\end{proof}

\begin{defi}
\begin{enumerate}
\item Let $I_0$ be the ideal of $R^{\defo,\ord}_{\rhob_{c_0},k}(\ell)$ generated by the set $\{(\tr(\rho^{\univ,\ell}(\frob_q)) - (1+q^{k-1}) \mid q \text{ is a prime } \neq p,\ell \}$. We call it the Eisenstein ideal of $R^{\defo,\ord}_{\rhob_{c_0},k}(\ell)$.
\item Let $J_0$ be the ideal of $R^{\ps,\ord}_{\rhob_0,k}(\ell)$ generated by the set $\{T(\frob_q) - (1+q^{k-1}) \mid q \text{ is a prime } \neq p,\ell \}$. We call it the Eisenstein ideal of $R^{\ps,\ord}_{\rhob_0,k}(\ell)$.
\end{enumerate}
\end{defi}
We will now give an upper bound on the index of $I_0$ in $R^{\defo,\ord}_{\rhob_{c_0},k}(\ell)$ in terms of $\nu$ and $v_p(k)$ introduced in \S\ref{notsec}.

\begin{lem}
\label{eisenlem}
The ideal $I_0$ has finite index in $R^{\defo,\ord}_{\rhob_{c_0},k}(\ell)$ and $R^{\defo,\ord}_{\rhob_{c_0},k}(\ell)/I_0$ is cyclic of order at most $ p^{\nu+v_p(k)}$.
\end{lem}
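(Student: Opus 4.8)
The plan is to pin down the structure of $R^{\defo,\ord}_{\rhob_{c_0},k}(\ell)/I_0$ by analyzing what happens to the universal deformation once we impose the Eisenstein relations. First I would observe that modulo $I_0$ the trace of $\rho^{\univ,\ell}$ agrees with $1 + \chi_p^{k-1}$ on all Frobenii at primes $q \neq p,\ell$, and since these are dense in $G_{\QQ,p\ell}$ and the trace is continuous, $\tr(\rho^{\univ,\ell}) \equiv 1 + \chi_p^{k-1} \pmod{I_0}$ on all of $G_{\QQ,p\ell}$. By Lemma~\ref{redlem} applied to the GMA $R[\rho^{\univ,\ell}(G_{\QQ,p\ell})]$ over $R := R^{\defo,\ord}_{\rhob_{c_0},k}(\ell)$ (using that, as in the proof of Lemma~\ref{surjlem}, after conjugating $\rho^{\univ,\ell}$ so that $\rho^{\univ,\ell}(g_0)$ is diagonal, this GMA has upper-right module $B = R$ since $\rho^{\univ,\ell}$ reduces to $\rhob_{c_0}$), the reducibility of the trace modulo $I_0$ forces $BC \subset I_0$, hence $C \subset I_0$. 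So over $R/I_0$ the representation becomes upper-triangular: $\rho^{\univ,\ell} \pmod{I_0} = \begin{pmatrix} \chi_1 & * \\ 0 & \chi_2 \end{pmatrix}$ with $\chi_1$ deforming $1$, $\chi_2$ deforming $\chi_p^{k-1}$, and $\chi_1 \chi_2 = \chi_p^{k-1}$.

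Next I would identify $R/I_0$ with the ring cut out by the deformations of the \emph{character} $\chi_1$ of $1$, subject to the remaining constraints. By class field theory, a deformation $\chi_1 : G_{\QQ,p\ell} \to (R/I_0)^\times$ of the trivial character factors through the Galois group of the maximal abelian pro-$p$ extension of $\QQ$ unramified outside $p,\ell$; that group is (up to finite prime-to-$p$ pieces) $\ZZ_p \times \ZZ_p^\times$, with the first factor the tame quotient of $I_\ell$ and the second coming from $I_p$. The $p$-ordinary condition (as used in the proof of Lemma~\ref{tanlem}) forces $\chi_1|_{I_p} = 1$, killing the contribution from the wild inertia at $p$; and the $\ell$-unipotent condition $\tr(\rho^{\univ,\ell}(g)) = 2$ for $g \in I_\ell$ combined with $\chi_1\chi_2 = \chi_p^{k-1}$ forces $\chi_1|_{I_\ell}$ and $\chi_2|_{I_\ell}$ to be inverse characters, so $\chi_1$ is a deformation of the trivial character ramified only at $\ell$ through a quotient of order dividing $\ell - 1$, i.e.\ through a cyclic group of order $p^\nu$. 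At the same time, $\chi_1 = \chi_p^{(1-k)/2}\cdot(\text{unramified-at-}p)$ type constraints tie the total determinant $\chi_p^{k-1}$: writing $\chi_1 = \psi$ and $\chi_2 = \chi_p^{k-1}\psi^{-1}$, the condition that $\chi_2$ be unramified at $p$ (part of $p$-ordinariness, matching the $\eta_2$ character in the proof of Lemma~\ref{tanlem}) forces $\psi^2 \equiv \chi_p^{k-1}$ on $I_p$, and since $\chi_p$ restricted to $I_p$ generates a procyclic group of order $p^{v_p(\#)} \cdot(\text{prime-to-}p)$ the ambiguity here is of size dividing $p^{v_p(k)}$. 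Putting these together, $R/I_0$ is a quotient of $\ZZ_p[(\ZZ/p^\nu) \times (\ZZ/p^{v_p(k)})]$'s relevant augmentation-type ring, giving $|R/I_0| \leq p^{\nu + v_p(k)}$.

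The main obstacle I expect is the bookkeeping in the second paragraph: precisely matching the wild-inertia-at-$p$ contribution to $v_p(k)$ and the tame-inertia-at-$\ell$ contribution to $\nu$. The cleanest route is probably to work with $\chi_1/\chi_1^{(0)}$ where $\chi_1^{(0)}$ is a fixed choice of character reducing appropriately (e.g.\ the Teichmüller-type lift), so that the quotient is a homomorphism from an explicit abelian pro-$p$ group into $1 + m_{R/I_0}$; then the order of the image of such a homomorphism is bounded by the order of the source, which by the ramification restrictions is exactly the product of the $\ell$-part (contributing $p^\nu$, since $\nu = v_p(\ell-1)$) and the part measuring the failure of $\chi_p^{k-1}$ to be a square on the relevant local group (contributing $p^{v_p(k)}$, since $\chi_p^{k-1}$ on $\gal(\QQ_p(\zeta_{p^\infty})/\QQ_p)$ has image of index controlled by $v_p(k-1)$... one must be careful: it is $v_p(k)$ that appears because $k$ even and the half-determinant twist $\chi_p^{k/2}$ enters via the $\ell^{k/2}$ in the Steinberg-or-unramified normalization). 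Finiteness of $I_0$ in $R$ then follows because $R$ is noetherian and $R/I_0$ is a finite ring, so $I_0$ is open, hence of finite index; alternatively finiteness is immediate from the explicit description. I would double-check the exponent against Wake's \cite[Theorem $5.1.2$]{W} and Mazur's \cite[Proposition II.$9.6$]{M2}, which should give the matching index for the Hecke side and thereby serve as a consistency check that the bound $p^{\nu + v_p(k)}$ is the right one.
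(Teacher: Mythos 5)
Your first paragraph is essentially sound and matches the paper's opening move: modulo $I_0$ the trace of $\rho^{\univ,\ell}$ is $1+\chi_p^{k-1}$ (by Chebotarev), and via Lemma~\ref{redlem} (or the argument of Lemma~\ref{surjlem}, as the paper does) the representation becomes upper triangular over $R:=R^{\defo,\ord}_{\rhob_{c_0},k}(\ell)/I_0$. But note that the diagonal characters are then \emph{exactly} $1$ and $\chi_p^{k-1}$, not merely deformations of them: $\chi_1+\chi_2=1+\chi_p^{k-1}$ and $\chi_1\chi_2=\chi_p^{k-1}$, and the two are distinguished residually. Combined with the surjectivity of $\phi$ from Lemma~\ref{surjlem} (so that $R^{\defo,\ord}_{\rhob_{c_0},k}(\ell)$ is generated by trace values), this already shows $R$ is a quotient of $\ZZ_p$ --- a point you never state, and which is what makes a $p$-power bound meaningful.

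The genuine gap is in your second paragraph, where the bound $p^{\nu+v_p(k)}$ must actually be produced. Bounding the group through which the diagonal characters can factor cannot bound $|R|$: as just noted, those characters are forced to be $1$ and $\chi_p^{k-1}$ on the nose, so they carry no information about the size of the ring, and nothing in your argument rules out $R\cong\ZZ_p$ (infinite). Symptomatically, your derivation of the factor $p^{v_p(k)}$ appeals to a square-root ambiguity of $\chi_p^{k-1}$ on $I_p$ and to the Steinberg normalization $\ell^{k/2}$, neither of which occurs in the definition of $R^{\defo,\ord}_{\rhob_{c_0},k}(\ell)$, and you flag the uncertainty yourself. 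The correct mechanism uses the off-diagonal entry, which your proposal never touches: since $\rho^{\univ,\ell}\pmod{I_0}$ still deforms $\rhob_{c_0}$ and $c_0$ is ramified at $\ell$, the cocycle value $b_\ell$ at $i_\ell$ is a \emph{unit} of $R$; applying the tame relation $g_\ell i_\ell g_\ell^{-1}=i_\ell^{\ell}$ to the upper-triangular $\rho$ with diagonal $(1,\chi_p^{k-1})$ gives $(\ell^{1-k}-\ell)b_\ell=0$, hence $\ell^k-1=0$ in $R$. Thus $R$ is a quotient of $\ZZ_p/(\ell^k-1)\ZZ_p$, and $v_p(\ell^k-1)=\nu+v_p(k)$ is the true source of both exponents, yielding finiteness and the stated bound. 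Without this use of the $\ell$-ramified extension class, the character bookkeeping cannot yield the lemma.
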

\begin{proof}
Note that the ideal generated by $\phi(J_0)$ in $R^{\defo,\ord}_{\rhob_{c_0},k}(\ell)$ is $I_0$ where $\phi : R^{\ps,\ord}_{\rhob_0,k}(\ell) \to R^{\defo,\ord}_{\rhob_{c_0},k}(\ell)$ is the surjective map obtained in Lemma~\ref{surjlem}.
By definition of $J_0$, it follows that $R^{\ps,\ord}_{\rhob_0,k}(\ell) /J_0 \simeq \ZZ_p$. Indeed, it is the kernel of the map $R^{\ps,\ord}_{\rhob_0,k}(\ell) \to \ZZ_p$ induced by the pseudo-representation $(1+\chi_p^{k-1},\chi_p^{k-1})$.
So $R^{\defo,\ord}_{\rhob_{c_0},k}(\ell)/I_0$ is a quotient of $\ZZ_p$ and hence, it is cyclic.

Let $R=R^{\defo,\ord}_{\rhob_{c_0},k}(\ell)/I_0$ and $\rho : G_{\QQ,p\ell} \to \GL_2(R)$ be the representation obtained by composing $\rho^{\univ,\ell}$ with the natural surjective map $R^{\defo,\ord}_{\rhob_{c_0},k}(\ell) \to R^{\defo,\ord}_{\rhob_{c_0},k}(\ell)/I_0$.
Note that $\tr(\rho) = 1 + \chi_{p}^{k-1}$ and $\det(\rho)=\chi_{p}^{k-1}$.
Using the arguments of Lemma~\ref{surjlem}, we conclude that (after replacing $\rho$ with a representation in its equivalence class if necessary) $\rho = \begin{pmatrix} 1 & * \\ 0 & \chi_{p}^{k-1}\end{pmatrix}$, where $*$ is non-zero and is unramified at $p$.

As $\rho$ lifts $\rhob_{c_0}$, it follows, from the definition of $i_{\ell}$, that $\rho(i_{\ell}) = \begin{pmatrix} 1 & b_{\ell} \\ 0 & 1\end{pmatrix}$ and $b_{\ell} \in (R^{\defo,\ord}_{\rhob_{c_0},k}(\ell)/I_0)^{\times}$.
Let $g_{\ell} \in G_{\QQ_{\ell}}$ be a lift of $\frob_{\ell}$.
As $\rho(g_{\ell}i_{\ell}g_{\ell}^{-1})=\rho(i_{\ell})^{\ell}$, we get that $(\ell^{1-k}-\ell)b_{\ell}=0$.
Since $b_{\ell}$ is a unit, we get that $\ell^k-1=0$.
Hence it follows that $R/I_0$ is finite (as it is a quotient of $\ZZ_p$) and $|R/I_0| \leq |\ZZ_p/(\ell^k-1)\ZZ_p|$.
Now the highest power of $p$ dividing $\ell^k-1$ is $\nu+v_p(k)$ and the lemma follows from this.
\end{proof}

 We will now give a necessary and sufficient condition for the existence of a reducible, $p$-ordinary first order deformation of $\rhob_{c_0}$ with determinant $\omega_p^{k-1}$.
Before proceeding further, let us develop some notation.

Let $\text{Ad}(\rhob_{c_0})$ be the adjoint representation of $\rhob_{c_0}$. So it is the space of $2 \times 2$ matrices over $\FF_p$ on which $g \in G_{\QQ,p\ell}$ acts by conjugation by $\rhob_{c_0}(g)$. 
Let $\text{Ad}^0(\rhob_{c_0})$ be the subspace of trace $0$ matrices of $\text{Ad}(\rhob_{c_0})$ and $V$ be the subspace of $\text{Ad}^0(\rhob_{c_0})$ given by upper triangular matrices.

It is easy to verify that $V$ is a $G_{\QQ,p\ell}$-subrepresentation of $\text{Ad}^0(\rhob_{c_0})$ and it is isomorphic to $\rhob'_{c_0} := \rhob_{c_0} \otimes \omega_p^{1-k}$.
Note that $\text{Ad}^0(\rhob_{c_0})/\rhob'_{c_0} = \omega_p^{k-1}$. So the natural map $H^1(G_{\QQ,p\ell}, \rhob'_{c_0}) \to H^1(G_{\QQ,p\ell}, \text{Ad}^0(\rhob_{c_0}))$ is injective.

By class field theory, we know that $\dim(\ker(H^1(G_{\QQ,p\ell},1) \to H^1(I_p,1))) = 1$. Recall that we have chosen a generator $a_0$ of $\ker(H^1(G_{\QQ,p\ell},1) \to H^1(I_p,1))$ and have denoted by $c_0 \cup a_0$ the cup product of $c_0$ and $a_0$. So $c_0 \cup a_0 \in H^2(G_{\QQ,p\ell}, \omega_p^{1-k})$.

Let $R$ be an object of $\mathcal{C}$.
We say that a deformation $\rho : G_{\QQ,p\ell} \to \GL_2(R)$ of $\rhob_{c_0}$ is reducible if there exist characters $\chi_1, \chi_2 : G_{\QQ,p\ell} \to R^{\times}$ deforming $1$ and $\omega_p^{k-1}$, respectively such that $\tr(\rho) = \chi_1 + \chi_2$.

We will now prove one of the key lemmas which link first order reducible deformations of $(\tr(\rhob_0),\det(\rhob_0))$ with the vanishing of the cup product $c_0 \cup a_0$.

\begin{lem}
\label{inflem}
There exists a $p$-ordinary deformation $\rho : G_{\QQ,{p\ell}} \to \GL_2(\FF_p[\epsilon]/(\epsilon^2))$ of $\rhob_{c_0}$ with determinant $\omega_p^{k-1}$ which is reducible and not isomorphic to $\rhob_{c_0}$ if and only if $c_0 \cup a_0 =0$.
\end{lem}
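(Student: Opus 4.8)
The plan is to translate the existence of a reducible $p$-ordinary first-order deformation of $\rhob_{c_0}$ (with the fixed determinant) that is not isomorphic to $\rhob_{c_0}$ into a cohomological statement, and then identify the obstruction to lifting with the cup product $c_0 \cup a_0 \in H^2(G_{\QQ,p\ell},\omega_p^{1-k})$. First I would set $R = \FF_p[\epsilon]/(\epsilon^2)$ and write a deformation $\rho$ of $\rhob_{c_0}$ in the form $\rho(g) = (I + \epsilon\, u(g))\,\rhob_{c_0}(g)$, where $u : G_{\QQ,p\ell} \to \mathrm{Ad}(\rhob_{c_0})$ is a cocycle; since $\det(\rho) = \omega_p^{k-1}$ is constant we may take $u$ valued in $\mathrm{Ad}^0(\rhob_{c_0})$. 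The condition that $\rho$ is reducible in the sense defined above (trace is a sum of two characters lifting $1$ and $\omega_p^{k-1}$) forces, after a change of basis, that $\rho$ is actually upper triangular, i.e.\ the class of $u$ lies in the image of $H^1(G_{\QQ,p\ell}, V) \hookrightarrow H^1(G_{\QQ,p\ell}, \mathrm{Ad}^0(\rhob_{c_0}))$, where $V \cong \rhob'_{c_0} = \rhob_{c_0}\otimes\omega_p^{1-k}$ is the space of upper-triangular trace-zero matrices (this injectivity is already noted in the excerpt). The $p$-ordinariness of $\rho$ restricts the class further by a condition at $I_p$.

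Next I would analyze $H^1(G_{\QQ,p\ell}, \rhob'_{c_0})$ using the short exact sequence $0 \to \omega_p^{1-k} \to \rhob'_{c_0} \to 1 \to 0$ of $G_{\QQ,p\ell}$-modules, coming from the filtration of $\rhob_{c_0}$ by $1$ (the sub is $\omega_p^{1-k} \otimes \omega_p^{0}$... more precisely: the sub-line of $\rhob_{c_0}$ is the trivial character, so after twisting by $\omega_p^{1-k}$ the sub-line of $\rhob'_{c_0}$ is $\omega_p^{1-k}$ and the quotient is $\omega_p^{1-k}\cdot\omega_p^{k-1} = 1$). The associated long exact sequence gives
\begin{equation}
H^1(G_{\QQ,p\ell}, \omega_p^{1-k}) \to H^1(G_{\QQ,p\ell}, \rhob'_{c_0}) \to H^1(G_{\QQ,p\ell}, 1) \xrightarrow{\ \partial\ } H^2(G_{\QQ,p\ell}, \omega_p^{1-k}),
\end{equation}
and the crucial point is that the connecting map $\partial$ is given (up to a nonzero scalar) by cup product with the extension class of $\rhob_{c_0}$, which is exactly $c_0$. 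A first-order deformation coming from a genuinely new extension direction (not already accounted for by the diagonal twists that produce $\rho \cong \rhob_{c_0}$) corresponds to lifting a suitable nonzero class in $H^1(G_{\QQ,p\ell},1)$ to $H^1(G_{\QQ,p\ell}, \rhob'_{c_0})$; by exactness such a lift exists iff that class is killed by $\partial = c_0 \cup (-)$. The $p$-ordinary constraint forces the relevant class in $H^1(G_{\QQ,p\ell},1)$ to lie in $\ker(H^1(G_{\QQ,p\ell},1)\to H^1(I_p,1))$, which by class field theory is one-dimensional and generated by $a_0$; hence the lift exists iff $c_0 \cup a_0 = 0$. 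I would also have to check that the deformation built this way is automatically $\ell$-unipotent-compatible and that, conversely, any reducible non-isomorphic $p$-ordinary deformation produces a nonzero such class — i.e.\ that the "new" direction and the "twist" direction $\rho \cong \rhob_{c_0}$ are genuinely distinguished, so that $c_0 \cup a_0 = 0$ really does yield a deformation not isomorphic to $\rhob_{c_0}$.

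I expect the main obstacle to be bookkeeping around the $p$-ordinary condition and the precise identification of $\partial$ with $c_0 \cup (-)$ together with the normalization ensuring that the resulting deformation is not equivalent to $\rhob_{c_0}$. Concretely, one must be careful that the ordinariness of $\rho$ at $p$ means $\rho|_{G_{\QQ_p}}$ is upper triangular with unramified quotient character; after writing $\rho$ upper triangular globally this translates into the off-diagonal cocycle being unramified at $p$ in the appropriate sense, which is what pins the base class down to the line spanned by $a_0$. The direction "deformation exists $\Rightarrow$ $c_0\cup a_0 = 0$" should then follow by running the long exact sequence backwards: the off-diagonal entry of such a $\rho$, viewed appropriately, gives a class in $H^1(G_{\QQ,p\ell},\rhob'_{c_0})$ mapping to a nonzero multiple of $a_0$ in $H^1(G_{\QQ,p\ell},1)$, whence $\partial(a_0) = 0$, i.e.\ $c_0 \cup a_0 = 0$. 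I would cite \cite[Lemma $3.1$]{D3} (as in the proof of Lemma~\ref{surjlem}) to normalize $\rho(g_0)$ diagonal and \cite[Section $2.1$]{SS} for the cup-product description of $\partial$.
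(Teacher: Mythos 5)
Your plan is in substance the same as the paper's proof: the paper also identifies reducible $p$-ordinary first-order deformations of $\rhob_{c_0}$ with classes in $\ker\bigl(H^1(G_{\QQ,p\ell},\rhob'_{c_0}) \to H^1(I_p,\rhob'_{c_0})\bigr)$, and then computes with an explicit $3\times 3$ upper-triangular representation with entries $\omega_p^{1-k}, c_0, F$ on the first row and $1, a_0$ below it; the statement that the coboundary of $-F$ equals $c_0 \cup a_0$ is exactly your identification of the connecting map $\partial$ in the sequence $H^1(G,\omega_p^{1-k}) \to H^1(G,\rhob'_{c_0}) \to H^1(G,1) \xrightarrow{\partial} H^2(G,\omega_p^{1-k})$ with cup product against $c_0$. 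So the forward direction (ordinary reducible deformation $\Rightarrow$ the base class is a nonzero multiple of $a_0$ and lifts, hence $c_0\cup a_0=0$) is fine as you describe it.

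The one place where your plan is too optimistic is the converse, which you fold into ``bookkeeping around the $p$-ordinary condition.'' Exactness only gives you \emph{some} lift $x$ of $a_0$ to $H^1(G_{\QQ,p\ell},\rhob'_{c_0})$ when $c_0\cup a_0=0$; but ordinarity requires $x|_{I_p}=0$ in $H^1(I_p,\rhob'_{c_0})$, i.e.\ the ``$F$-component'' of the lift must also be trivial on $I_p$, and this is not automatic from $a_0|_{I_p}=0$. One must modify the chosen lift by a class in $H^1(G_{\QQ,p\ell},\omega_p^{1-k})$ whose restriction to $G_{\QQ_p}$ cancels $F|_{G_{\QQ_p}}$, and the existence of such a global class is where the running hypotheses enter: the paper uses that the $\omega_p^{1-k}$-component of the $p$-part of the class group of $\QQ(\zeta_p)$ is trivial, so that (by Lemma~\ref{cohomlem} and its proof) the restriction map $H^1(G_{\QQ,p},\omega_p^{1-k}) \to H^1(G_{\QQ_p},\omega_p^{1-k})$ is an isomorphism between one-dimensional spaces, which lets one arrange $F(I_p)=0$. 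Without this arithmetic input the equivalence could fail, so in writing up the proof you should make this adjustment step explicit rather than treat it as formal; with it, your argument becomes the paper's.
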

\begin{proof}
Let $\rho : G_{\QQ,{p\ell}} \to \GL_2(\FF_p[\epsilon]/(\epsilon^2))$ be a deformation of $\rhob_{c_0}$ with determinant $\omega_p^{k-1}$.
Let $R=\FF_p[\epsilon]/(\epsilon^2)$.
From \cite[Lemma $3.1$]{D3}, we can assume, by changing the basis if necessary, that $\rho(g_0)=\begin{pmatrix} a_0 &0\\ 0 & d_0\end{pmatrix}$ with $a_0 \pmod{(\epsilon)} =1 $ and $d_0 \pmod{(\epsilon)} = \omega_p^{k-1}(g_0)$.

Now \cite[Lemma $2.4.5$]{Bel} implies that there exists an ideal $ C \subset R$ such that $R[\rho(G_{\QQ,p\ell})] = \begin{pmatrix}R & R\\ C & R\end{pmatrix}$. 
By Lemma~\ref{redlem}, it follows that $\rho$ is reducible if and only if $C=0$ and in this case, $\rho \simeq \begin{pmatrix} \chi_1 & *\\ 0 & \chi_2\end{pmatrix}$, where $\chi_1$ and $\chi_2$ are characters of $G_{\QQ,p\ell}$ deforming $1$ and $\chi_p^{k-1}$, respectively.

Note that $\rho$ corresponds to an element of $x \in H^1(G_{\QQ,p\ell},\text{Ad}^0(\rhob_{c_0}))$. So from the previous paragraph, it follows that $\rho$ is reducible if and only if $x \in  H^1(G_{\QQ,p\ell},\rhob'_{c_0}) \subset  H^1(G_{\QQ,p\ell},\text{Ad}^0(\rhob_{c_0}))$.
Since $\chi_1$ is a lift of $1$ and $\chi_2$ is a lift of $\omega_p^{k-1}$, it follows that if $\rho$ is reducible, then $\rho$ is $p$-ordinary if and only if $\rho|_{I_p} \simeq 1 \oplus \omega_p^{k-1}$.

So $\rho$ is reducible and $p$-ordinary if and only if $x \in \ker(H^1(G_{\QQ,p\ell},\rhob'_{c_0}) \to H^1(I_p,\rhob'_{c_0}))$.
Thus there exists a non-trivial, reducible, $p$-ordinary deformation $\rho : G_{\QQ,p\ell} \to \GL_2(\FF_p[\epsilon]/(\epsilon^2))$ of $\rhob_{c_0}$ with determinant $\omega_p^{k-1}$ if and only if $\ker(H^1(G_{\QQ,p\ell},\rhob'_{c_0}) \to H^1(I_p,\rhob'_{c_0})) \neq 0$.

Note that an element $x$ of $H^1(G_{\QQ,p\ell},\rhob'_{c_0})$ gives a representation $\rho'_x : G_{\QQ,p\ell} \to \GL_3(\FF_p)$ such that 
$$\rho'_x(g) = \begin{pmatrix} \omega_p^{1-k}(g) & c_0(g) & F(g) \\ 0 & 1 & b(g) \\ 0 & 0 & 1\end{pmatrix} \text{ for all } g \in G_{\QQ,p\ell}.$$
Note that $b \in H^1(G_{\QQ,p\ell},1)$.
So $x \in \ker(H^1(G_{\QQ,p\ell},\rhob'_{c_0}) \to H^1(I_p,\rhob'_{c_0}))$ if and only if $b(I_p)=0$ and $F(I_p)=0$ in the corresponding $\rho'_x$ (after changing the basis if necessary).
Hence, $\ker(H^1(G_{\QQ,p\ell},\rhob'_{c_0}) \to H^1(I_p,\rhob'_{c_0})) \neq 0$ if and only if there exists a representation $\rho' : G_{\QQ,p\ell} \to \GL_3(\FF_p)$ such that $$\rho'(g) = \begin{pmatrix} \omega_p^{1-k}(g) & c_0(g) & F(g) \\ 0 & 1 & a_0(g) \\ 0 & 0 & 1\end{pmatrix} \text{ for all } g \in G_{\QQ,p\ell} \text{ and } F(I_p)=0.$$

Now if such a $\rho'$ exists, then it is easy to verify that the coboundary of $-F : G_{\QQ,p\ell} \to \FF_p$ is $c_0 \cup a_0$ and hence, $c_0 \cup a_0 =0$.

On the other hand, suppose $c_0 \cup a_0=0$ and let $F : G_{\QQ,p\ell} \to \FF_p$ be the map such that coboundary of $-F$ is $c_0 \cup a_0$.
Since $c_0$ is unramified at $p$ and $\omega_p^{1-k}|_{G_{\QQ_p}} \neq 1$, it follows that $c_0 |_{G_{\QQ_p}} =0$. Hence, $F|_{G_{\QQ_p}} \in H^1(G_{\QQ_p},\omega_p^{1-k})$.

Since we are assuming that the $\omega_p^{1-k}$-component of the $p$-part of the class group of $\QQ(\zeta_p)$ is trivial, we know that $\ker(H^1(G_{\QQ,p},\omega_p^{1-k}) \to H^1(G_{\QQ_p},\omega_p^{1-k})) =0$. 
From the proof of Lemma~\ref{cohomlem}, we know that $\dim(H^1(G_{\QQ,p},\omega_p^{1-k})) = \dim(H^1(G_{\QQ_p},\omega_p^{1-k})) =1$. 
So the restriction map $H^1(G_{\QQ,p} , \omega_p^{1-k}) \to H^1(G_{\QQ_p},\omega_p^{1-k})$ is an isomorphism.
Hence, we can change $F$ by a suitable element of $H^1(G_{\QQ,p} , \omega_p^{1-k})$ to assume that $F(I_p)=0$.

This means that there exists a representation $\rho' : G_{\QQ,p\ell} \to \GL_3(\FF_p)$ such that
$\rho'(g) = \begin{pmatrix} \omega_p^{1-k}(g) & c_0(g) & F(g) \\ 0 & 1 & a_0(g) \\ 0 & 0 & 1\end{pmatrix}$ for all $g \in G_{\QQ,p\ell}$ and $F(I_p)=0$. This completes the proof of the lemma.
\end{proof}

We say that a pseudo-representation $(t,d) : G_{\QQ,p\ell} \to R$ arises from a representation if there exists a representation $\rho : G_{\QQ,p\ell} \to \GL_2(R)$ such that $\tr(\rho) =t$ and $\det(\rho) = d$.

\begin{lem}
\label{infinlem}
Suppose $(t,d) : G_{\QQ,p\ell} \to \FF_p[\epsilon]/(\epsilon^2)$ is a non-trivial, reducible, $p$-ordinary, $\ell$-unipotent pseudo-representation with determinant $\omega_p^{k-1}$ deforming $(\tr(\rhob_0),\det(\rhob_0))$. Then $(t,d)$ arises from a $p$-ordinary representation deforming $\rhob_{c_0}$ if and only if $c_0 \cup a_0=0$.
\end{lem}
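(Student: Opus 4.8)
The plan is to translate the question of whether a reducible first-order pseudo-representation $(t,d)$ lifts to an actual representation deforming $\rhob_{c_0}$ into a statement about the existence of a certain upper-triangular $\GL_3$-valued representation, and then invoke Lemma~\ref{inflem}. First I would use the structure results already established: since $(t,d)$ is reducible, write $t = \chi_1 + \chi_2$ with $\chi_1$ lifting $1$ and $\chi_2$ lifting $\omega_p^{k-1}$, and by part~\eqref{bhaag6} of Lemma~\ref{gmalem} together with the proof of Lemma~\ref{tanlem}, $\chi_1|_{I_p} = 1$, $\chi_2|_{I_p} = \omega_p^{k-1}$, and $\chi_1 = \eta_1^m$, $\chi_2 = \omega_p^{k-1}\eta_1^{-m}$ for the character $\eta_1$ of the proof of Lemma~\ref{tanlem}; the non-triviality of $(t,d)$ forces $m \neq 0$ in $\FF_p$. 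So $(t,d)$ is, up to scaling, the pseudo-representation coming from $(\eta_1 + \omega_p^{k-1}\eta_1^{-1}, \omega_p^{k-1})$, and I may assume $m=1$.

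Next I would set up the representation-theoretic side. A representation $\rho : G_{\QQ,p\ell} \to \GL_2(\FF_p[\epsilon]/(\epsilon^2))$ deforming $\rhob_{c_0}$ with $\tr(\rho) = t$, $\det(\rho) = d$, that is reducible and $p$-ordinary, corresponds (by the analysis in the proof of Lemma~\ref{inflem}) to a class $x \in \ker(H^1(G_{\QQ,p\ell},\rhob'_{c_0}) \to H^1(I_p,\rhob'_{c_0}))$, where $\rhob'_{c_0} = \rhob_{c_0}\otimes\omega_p^{1-k}$; concretely such an $x$ gives a $\GL_3(\FF_p)$-valued representation with the matrix shape displayed in the proof of Lemma~\ref{inflem}, whose $(1,2)$-entry is $c_0$, whose $(2,3)$-entry $b$ is a class in $H^1(G_{\QQ,p\ell},1)$, and whose $(1,3)$-entry $F$ satisfies $\delta(-F) = c_0 \cup b$. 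The one point that needs care, and which I expect to be the main obstacle, is checking that the $\ell$-unipotence of $(t,d)$ together with the ordinarity conditions pins down $b$ to lie in the line spanned by $a_0$ (equivalently, that $b \in \ker(H^1(G_{\QQ,p\ell},1) \to H^1(I_p,1))$ and is the class controlling the deformation): the ramification-at-$\ell$ data of $\rho$ is governed by $b_\ell$ on the torsion-free tame quotient, and one must extract from $\tr(\rho)(i_\ell) = 2$ (the $\ell$-unipotent condition) that the diagonal character $\chi_1$ contributes the ramification $\eta_1$ at $\ell$, forcing $b$ to be unramified at $p$ and hence (after adjusting by $H^1(G_{\QQ,p},\omega_p^{k-1})$-type choices, or rather by an element of the one-dimensional $\ker(H^1(G_{\QQ,p\ell},1)\to H^1(I_p,1))$) proportional to $a_0$ with a nonzero scalar dictated by $m$.

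Once $b$ is identified with a nonzero multiple of $a_0$, the existence of the desired $\rho$ becomes exactly the solvability of the extension problem for the $\GL_3$-representation with corner entry $F$ satisfying $\delta(-F) = c_0 \cup a_0$ and $F(I_p) = 0$, which is precisely the content of Lemma~\ref{inflem}: such $\rho'$ (and hence such $\rho$) exists if and only if $c_0 \cup a_0 = 0$. For the converse direction, if $c_0 \cup a_0 = 0$, Lemma~\ref{inflem} produces a $p$-ordinary reducible deformation $\rho$ of $\rhob_{c_0}$ with determinant $\omega_p^{k-1}$; I would then verify that $(\tr(\rho),\det(\rho))$ is $\ell$-unipotent --- this follows because the upper-triangular shape forces $\tr(\rho(g)) = 2$ for $g \in I_\ell$ once one knows the diagonal characters are $\eta_1^{\pm}$-twists behaving as in the proof of Lemma~\ref{tanlem}, but one may need to twist $\rho$ by an unramified-at-$p$ character to arrange the correct $\ell$-inertia behaviour --- and that it recovers the given $(t,d)$ up to the scaling already normalized. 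Assembling these two implications gives the equivalence. The delicate bookkeeping is entirely in the second paragraph: matching the $\ell$-ramification of the reducible pseudo-representation with the class $a_0$ and keeping track of the nonzero scalars so that the cup product $c_0 \cup a_0$ (rather than some multiple that could vanish spuriously) is what governs the obstruction.
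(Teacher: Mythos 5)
Your proposal is correct and follows essentially the same route as the paper: both implications reduce to Lemma~\ref{inflem}, the $\ell$-unipotence of $\tr(\rho)$ is checked directly on $I_{\ell}$ (the paper gets $\tr(\rho(g))=2$ for $g\in I_\ell$ simply because $\chi_1(g)\chi_2(g)=\omega_p^{k-1}(g)=1$, so no twisting is needed), and the matching of the specific $(t,d)$ uses exactly the one-dimensionality of the reducible, $p$-ordinary, $\ell$-unipotent first-order pseudo-deformation space from the proof of Lemma~\ref{tanlem}. The bookkeeping you anticipate about pinning $b$ down to a non-zero multiple of $a_0$ is already packaged inside Lemma~\ref{inflem} (ordinarity, not $\ell$-unipotence, makes $b$ unramified at $p$, and non-triviality of $(t,d)$ makes it non-zero), so the paper's proof is shorter but not different in substance.
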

\begin{proof}
Suppose $\rho : G_{\QQ,p\ell} \to \GL_2(\FF_p[\epsilon]/(\epsilon^2))$ is a $p$-ordinary deformation of $\rhob_{c_0}$ such that $t=\tr(\rho)$ and $d=\det(\rho)$.
As $(t,d)$ is non-trivial, reducible and $d=\omega_p^{k-1}$, Lemma~\ref{inflem} implies that $c_0 \cup a_0 =0$.

Now suppose $c_0 \cup a_0 =0$. Then Lemma~\ref{inflem} implies that there exists a non-trivial reducible $p$-ordinary deformation $\rho : G_{\QQ,p\ell} \to \GL_2(\FF_p[\epsilon]/(\epsilon^2))$ of $\rhob_{c_0}$ with determinant $\omega_p^{k-1}$.
So $\tr(\rho) = \chi_1 + \chi_2$, where $\chi_1, \chi_2 : G_{\QQ,p\ell} \to (\FF_p[\epsilon]/(\epsilon^2))^{\times}$ are characters deforming $1$ and $\omega_p^{k-1}$, respectively.
Since $\rho$ is $p$-ordinary, $\chi_1$ is unramified at $p$.
If $g \in I_{\ell}$, then $\chi_1(g) = 1+a_g\epsilon$ for some $a_g \in \FF_p$. As $d(g)=\chi_1(g)\chi_2(g) =\omega_p^{k-1}(g)=1$, so $\chi_2(g) = (1+a_g\epsilon)^{-1}=1 - a_g\epsilon$. So $\tr(\rho(g))=2$ for all $g \in I_{\ell}$.
Therefore, $(\tr(\rho),\det(\rho)) : G_{\QQ,p\ell} \to \FF_p[\epsilon]/(\epsilon^2)$ is a non-trivial, reducible, $p$-ordinary, $\ell$-unipotent pseudo-representation with determinant $\omega_p^{k-1}$ deforming $(\tr(\rhob_0),\det(\rhob_0))$.

From the proof of Lemma~\ref{tanlem}, we know that the space of first order deformations of $(\tr(\rhob_0),\det(\rhob_0))$ which are reducible, $p$-ordinary, $\ell$-unipotent with determinant $\omega_p^{k-1}$ has dimension $1$.
Therefore, we can find a deformation $\rho'$ of $\rhob_{c_0}$ in the subspace of the first order deformations of $\rhob_{c_0}$ generated by $\rho$ such that $\tr(\rho) =t$ and $\det(\rho)=d$. This proves the lemma.
\end{proof}

\section{Hecke algebras}
\label{heckesec}
We will now introduce the Hecke algebras that we will be working with and collect their properties. We will mostly follow \cite{W} in this section.
Let $M_k(\ell,\ZZ_p)$ be the space of classical modular forms of level $\Gamma_0(\ell)$ and weight $k$ with Fourier coefficients in $\ZZ_p$ and let $S_k(\ell,\ZZ_p)$ be its submodule of cusp forms.
Let $\TT$ be the $\ZZ_p$-subalgebra of $\text{End}_{\ZZ_p}(M_k(\ell,\ZZ_p))$ generated by the Hecke operators $T_q$ for primes $q \neq \ell$ and the Atkin--Lehner operator $w_{\ell}$ at $\ell$.
Let $\TT^0$ be the $\ZZ_p$-subalgebra of $\text{End}_{\ZZ_p}(S_k(\ell,\ZZ_p))$ generated by the Hecke operators $T_q$ for primes $q \neq \ell$ and the Atkin--Lehner operator $w_{\ell}$ at $\ell$.

The restriction of the action of Hecke operators from $M_k(\ell,\ZZ_p)$ to $S_k(\ell,\ZZ_p)$ gives a surjective morphism $\TT \to \TT^0$.
Let $I^{\eis}$ be the ideal of $\TT$ generated by the set $\{w_{\ell}+1, T_q - (1+q^{k-1}) \mid q \neq \ell \text{ is a prime}\}$. It is easy to verify that $I^{\eis}$ is a prime ideal $\TT$ and it corresponds to the Eisenstein series of level $\Gamma_0(\ell)$ and weight $k$ having $w_{\ell}$-eigenvalue $-1$ (see \cite[Section $2$]{W} for more details).
Let $\fm$ be the ideal of $\TT$ generated by $p$ and $I^{\eis}$. So $\fm$ is a maximal ideal of $\TT$.
Denote by $\TT_{\fm}$ and $\TT^0_{\fm}$ the completion of $\TT$ and $\TT^0$ at $\fm$, respectively.
It follows from \cite{W} that $\TT^0_{\fm}$ is non-zero.
 Note that the surjective map $\TT \to \TT^0$ induces a surjective map $F : \TT_{\fm} \to \TT^0_{\fm}$.

Let $\fm^0$ be the maximal ideal of $\TT^0$.
As $\TT$ is a finite $\ZZ_p$-module, \cite[Corollary 7.6]{E} implies that $\TT_{\mathfrak{m}}$ is the localization of $\TT$ at $\fm$ and it is a finite $\ZZ_p$-module.
Similarly, $\TT^0_{\fm}$ is the localization of $\TT^0$ at $\fm^0$ and it is a finite $\ZZ_p$-module.
Note that both $\TT$ and $\TT^0$ are reduced.
As $\TT_{\fm}$ and $\TT^0_{\fm}$ are localizations of $\TT$ and $\TT^0$, respectively, we get that both $\TT_{\fm}$ and $\TT^0_{\fm}$ are reduced.

The residue field of both $\TT_{\fm}$ and $\TT^0_{\fm}$ is $\FF_p$.
Denote the maximal ideals of $\TT_{\fm}$ and $\TT^0_{\fm}$ by $\fm$ and $\fm^0$, respectively.
By abuse of notation, denote by $I^{\eis}$ the ideal of $\TT_{\fm}$ generated by the set $\{w_{\ell}+1, T_q - (1+q^{k-1}) \mid q \neq \ell \text{ is a prime}\}$.
We call it the Eisenstein ideal of $\TT_{\fm}$.
Denote by $I^{\eis,0}$ the ideal of $\TT^0_{\fm}$ generated by the set $\{w_{\ell}+1, T_q - (1+q^{k-1}) \mid q \neq \ell \text{ is a prime}\}$.
We call it the Eisenstein ideal of $\TT^0_{\fm}$.

We will now collect some of the properties of $\TT_{\fm}$ and $\TT^0_{\fm}$.
We begin by relating $\TT_{\fm}$ with the pseudo-deformation ring $R^{\ps,\ord}_{\rhob_0,k}(\ell)$ introduced in \S\ref{defsec}.
\begin{lem}
\label{pseudoordlem}
There exists a pseudo-representation $(\tau_{\ell}, \delta_{\ell}) : G_{\QQ,p\ell} \to \TT_{\fm}$ such that $(\tau_{\ell}, \delta_{\ell})$ is a $p$-ordinary, Steinberg-or-unramified at $\ell$ deformation of $(\tr(\rhob_0),\det(\rhob_0))$ with determinant $\chi_p^{k-1}$ and $\tau_{\ell}(\frob_q) = T_q$ for all primes $q \nmid p\ell$. The morphism $\phi_{\TT} : R^{\ps,\ord}_{\rhob_0,k}(\ell) \to \TT_{\fm}$ induced by $(\tau_{\ell},\delta_{\ell})$ is surjective.
\end{lem}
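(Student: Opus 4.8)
The plan is to build $(\tau_{\ell},\delta_{\ell})$ from the $p$-adic Galois representations of the eigenforms contributing to $\TT_{\fm}$, in the spirit of \cite{W}. First I would note that the Hecke operators act semisimply on $M_k(\ell,\overline{\QQ_p})$, so $\TT$ is reduced and $\ZZ_p$-torsion free; hence $\TT_{\fm}$ embeds into $\TT_{\fm}[1/p]\cong\prod_f K_f$, a finite product of $p$-adic fields indexed up to conjugacy by the normalized eigenforms $f$ of $M_k(\ell,\ZZ_p)_{\fm}$. Deligne attaches to each $f$ a representation $\rho_f\colon G_{\QQ,p\ell}\to\GL_2(\overline{\QQ_p})$ unramified outside $p\ell$ with $\det\rho_f=\chi_p^{k-1}$ and $\tr\rho_f(\frob_q)=T_q$ (the image of $T_q$ in $K_f$) for $q\nmid p\ell$. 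Taking $\rho=\prod_f\rho_f$, $\tau_{\ell}=\tr\rho$ and $\delta_{\ell}=\det\rho=\chi_p^{k-1}$ gives a pseudo-representation valued a priori in $\prod_f K_f$. Because $\tau_{\ell}(\frob_q)=T_q\in\TT_{\fm}$ for $q\nmid p\ell$ and the Frobenii are dense by Chebotarev, continuity forces $\tau_{\ell}$, and hence $\delta_{\ell}(g)=\tfrac12\big(\tau_{\ell}(g)^2-\tau_{\ell}(g^2)\big)$ (here $p>2$), to take values in the closed subring $\TT_{\fm}$. Comparing $\tau_{\ell}\bmod\fm$ with $\tr\rhob_0$ on Frobenii and invoking Brauer--Nesbitt together with Chebotarev then identifies $(\tau_{\ell},\delta_{\ell})\bmod\fm$ with $(\tr\rhob_0,\det\rhob_0)$, so $(\tau_{\ell},\delta_{\ell})$ is a deformation of $(\tr\rhob_0,\det\rhob_0)$ with determinant $\chi_p^{k-1}$.

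Next I would verify the local conditions one eigenform at a time and descend along $\TT_{\fm}\hookrightarrow\prod_f K_f$, since the identities in Definitions~\ref{orddefi} and~\ref{stdefi} are identities of $\TT_{\fm}$-valued functions. As a preliminary, note that $w_{\ell}\in\TT_{\fm}$ satisfies $w_{\ell}^2=1$ and $w_{\ell}\equiv-1\pmod\fm$, so, $p$ being odd, $w_{\ell}-1$ is a unit and $w_{\ell}=-1$ in $\TT_{\fm}$; hence every $\ell$-new eigenform occurring in $M_k(\ell,\ZZ_p)_{\fm}$ has Atkin--Lehner sign $-1$ and $U_{\ell}$-eigenvalue $\ell^{k/2-1}$. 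With this, for each $f$ either $f$ is $\ell$-new, so $\rho_f|_{G_{\QQ_{\ell}}}$ is the expected unramified twist of the Steinberg representation, $\rho_f(h)-1$ is nilpotent on $I_{\ell}$, and $\big(\rho_f(g_{\ell})-\ell^{k/2}\big)\big(\rho_f(h)-1\big)=0$ for a Frobenius lift $g_{\ell}$ (the eigenvalue $\ell^{k/2}$ being exactly what the computed $U_{\ell}$-eigenvalue and $\det=\chi_p^{k-1}$ force), or $f$ is old from level $1$, so $\rho_f$ is unramified at $\ell$ and $\rho_f(h)=1$ on $I_{\ell}$; in both cases $\tr\rho_f=2$ on $I_{\ell}$ and the Steinberg-or-unramified identity holds, and the Eisenstein factor $1\oplus\chi_p^{k-1}$ is unramified at $\ell$ and satisfies both. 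At $p$: since $T_p\equiv1+p^{k-1}\equiv1\pmod\fm$ is a unit, each $f$ is $p$-ordinary, and by local--global compatibility at $p$ (Wiles) $\rho_f|_{I_p}\sim\begin{pmatrix}\chi_p^{k-1}&*\\0&1\end{pmatrix}$, whence $\big(\rho_f(g)-\chi_p^{k-1}(g)\big)\big(\rho_f(h)-1\big)=0$ for $g,h\in I_p$, which after multiplying by $\rho_f(g')$ and taking traces is precisely the $p$-ordinarity condition of Definition~\ref{orddefi}.

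Since $(\tau_{\ell},\delta_{\ell})$ is then $p$-ordinary and Steinberg-or-unramified at $\ell$ (in particular $\ell$-unipotent) with determinant $\chi_p^{k-1}$, universality of $R^{\ps,\ord}_{\rhob_0,k}(\ell)$ produces $\phi_{\TT}$, and that of $R^{\ps,\st}_{\rhob_0,k}(\ell)$ produces $\psi_{\TT}$ through which $\phi_{\TT}$ factors. For surjectivity of $\phi_{\TT}$, its image is a closed $\ZZ_p$-subalgebra of $\TT_{\fm}$ containing $\tau_{\ell}(\frob_q)=T_q$ for all $q\nmid p\ell$ and $w_{\ell}=-1$, so it only remains to put $T_p$ in the image. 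Here I would use that each $\rho_f|_{G_{\QQ_p}}$ has, by $p$-ordinarity, an unramified character $\lambda_f$ with $\lambda_f(\frob_p)$ the unit root of $X^2-T_pX+p^{k-1}$ over $K_f$; that this polynomial reduces mod $\fm$ to $X(X-1)$, with distinct roots, so its unit root $\alpha$ already lies in $\TT_{\fm}$ by Hensel's lemma; and that $\tau_{\ell}|_{G_{\QQ_p}}=\lambda+\chi_p^{k-1}\lambda^{-1}$ where $\lambda=(\lambda_f)_f$. Choosing a Frobenius lift $g_p$ with $\omega_p^{k-1}(g_p)\neq1$ (possible since $\omega_p^{k-1}|_{I_p}\neq1$), one sees that $\alpha=\lambda(\frob_p)$ is the root $\equiv1\pmod\fm$ of $X^2-\tau_{\ell}(g_p)X+\chi_p^{k-1}(g_p)$, hence lies in the subring of $\TT_{\fm}$ generated by $\tau_{\ell}(g_p)$ over $\ZZ_p$, hence in the image of $\phi_{\TT}$; therefore so does $T_p=\alpha+p^{k-1}\alpha^{-1}$. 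Since $\TT$ is generated over $\ZZ_p$ by $\{T_q:q\neq\ell\}$ and $w_{\ell}$, this shows $\phi_{\TT}$ is surjective.

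I expect the main obstacle to be this last step, namely extracting $T_p$ from $(\tau_{\ell},\delta_{\ell})$, together with the careful descent of the pointwise local conditions from $\prod_f K_f$ down to $\TT_{\fm}$. Both rely on having a $g_0\in I_p$ with $\omega_p^{k-1}(g_0)\neq1$, i.e.\ on the $p$-distinguished situation; this is precisely the role of the hypothesis $p-1\nmid k$ (which also, in our setup, forces $\zeta(1-k)$ to be a $p$-adic unit, and is why no auxiliary Frobenius parameter is needed in the $p$-ordinary condition). The remaining ingredients, namely Deligne's construction, local--global compatibility at $\ell$ and $p$, and ordinarity of eigenforms with unit $a_p$, are standard, so the real work is the organization and the bookkeeping.
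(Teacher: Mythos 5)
Your proposal is correct and follows essentially the same route as the paper, which constructs $(\tau_{\ell},\delta_{\ell})$ by gluing the pseudo-representations of the eigenforms lifting $\rhob_0$ (citing \cite[Section 3.2]{W} and \cite[Proposition 4.2.4]{WWE1}), deduces $p$-ordinarity from the unit $T_p$-eigenvalue and the Steinberg-or-unramified condition from the level at $\ell$, and defers surjectivity to the argument of \cite{WWE1}. You merely spell out the details the paper outsources, in particular the descent from $\prod_f K_f$ to $\TT_{\fm}$ and the recovery of $T_p$ from the unit root via Hensel's lemma in the $p$-distinguished situation, which is exactly the intended argument.
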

\begin{proof}
For $k>2$, the lemma follows from \cite[Section $3.2$]{W}. For $k=2$, the lemma follows from \cite[Proposition $4.2.4$]{WWE1}.
Indeed, the pseudo-representation $(\tau_{\ell},\delta_{\ell})$ is obtained by gluing the pseudo-representations corresponding to the semi-simple $p$-adic Galois representations attached to the modular eigenforms of level $\Gamma_0(\ell)$ and weight $k$ lifting $\rhob_0$.
Here we say that an eigenform $f$ of level $\Gamma_0(\ell)$ and weight $k$ lifts $\rhob_0$ if $a_q(f) \equiv 1+q^{k-1} \pmod{\varpi_f}$ for all primes $q \neq \ell$, where $a_q(f)$ is the $T_q$-eigenvalue of $f$ and $\varpi_f$ is a uniformizer of the ring of integers of the finite extension of $\QQ_p$ obtained by attaching the Hecke eigenvalues of $f$ to $\QQ_p$.

Suppose $f$ is an eigenform of level $\Gamma_0(\ell)$ and weight $k$ lifting $\rhob_0$ and $\rho_f$ is the semi-simple $p$-adic Galois representation attached to $f$.
Then the $T_p$-eigenvalue of $f$ is a $p$-adic unit and hence, $\rho_f$ is $p$-ordinary. This means that the pseudo-representation attached to $\rho_f$ is also $p$-ordinary (see \S\ref{repsubsec}).
As $f$ is of level $\Gamma_0(\ell)$, we know that $\rho_f|_{G_{\QQ_{\ell}}}$ is either unramified or Steinberg. Hence, the pseudo-representation attached to $\rho_f$ is Steinberg-or-unramified at $\ell$ (see \cite[Observation $1.9.2$]{WWE2}). This proves the desired properties of the pseudo-representation $(\tau_{\ell},\delta_{\ell})$.
The surjectivity of $\phi_{\TT}$ can be concluded in the same way as in \cite[Proposition $4.2.4$]{WWE1}.
\end{proof}

We now prove that the $\TT^0_{\fm}$-valued pseudo-representation obtained by composing $(\tau_{\ell},\delta_{\ell})$ with the natural surjective map $F : \TT_{\fm} \to \TT^0_{\fm}$ arises from an actual representation.

\begin{lem}
\label{ordlem}
There exists a $p$-ordinary deformation $\rho_{\TT^0} : G_{\QQ,p\ell} \to \GL_2(\TT^0_{\fm})$ of $\rhob_{c_0}$ with determinant $\chi_p^{k-1}$ such that $\tr(\rho_{\TT^0}(\frob_q)) = T_q$ for all primes $q \nmid p\ell$ and $\tr(\rho_{\TT^0}(g)) =2$ for all $g \in I_{\ell}$.
The morphism $\phi_{\TT^0} : R^{\defo,\ord}_{\rhob_{c_0},k}(\ell) \to \TT^0_{\fm}$ induced by $\rho_{\TT^0}$ is surjective.
\end{lem}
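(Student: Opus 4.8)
The plan is to construct $\rho_{\TT^0}$ by finding, for each minimal prime of $\TT^0_{\fm}$, the associated $p$-adic Galois representation, gluing them into a single representation valued in the total ring of fractions, and then showing it can be conjugated to land in $\GL_2(\TT^0_{\fm})$ in the ordinary shape lifting $\rhob_{c_0}$. Concretely, $\TT^0_{\fm}$ is reduced (being a Hecke algebra acting faithfully on a space of cusp forms over $\ZZ_p$), so it embeds into a product of discrete valuation rings indexed by the normalized cuspidal eigenforms $f$ of level $\Gamma_0(\ell)$ and weight $k$ congruent to $\rhob_0$. For each such $f$, the semisimple $p$-adic representation $\rho_f$ is irreducible (it is attached to a cusp form and is congruent to $1 \oplus \omega_p^{k-1}$, which is reducible but the lift is not — irreducibility follows from the usual arguments, or one works with pseudo-representations and applies Lemma~\ref{redlem} as in the proof of Lemma~\ref{eisenlem}), is $p$-ordinary since the $T_p$-eigenvalue is a $p$-adic unit, and has $\det = \chi_p^{k-1}$. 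One shows $\tr(\rho_f(\frob_q)) = a_q(f) = T_q$ and $\tr(\rho_f(g)) = 2$ for $g \in I_\ell$ (Steinberg-or-unramified with trivial mod-$p$ reduction of the relevant character forces unipotent inertia image after twisting).

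Next I would descend from the product of DVRs to $\TT^0_{\fm}$ itself. The pseudo-representation $(\tau^0_\ell, \delta^0_\ell) := (\tau_\ell \bmod \ker F, \delta_\ell \bmod \ker F)$ obtained from Lemma~\ref{pseudoordlem} is already defined over $\TT^0_{\fm}$, and it agrees with the glued $\prod_f \tr(\rho_f)$ after embedding $\TT^0_{\fm}$ into $\prod_f \mathcal{O}_f$; this identifies our candidate trace/determinant over $\TT^0_{\fm}$. To produce an honest representation, I would invoke the GMA machinery: since $\rhob_0 = 1 \oplus \omega_p^{k-1}$ with $1 \neq \omega_p^{k-1}$ (the $p$-distinguished hypothesis $p-1 \nmid k$), one can choose $g_0 \in I_p$ with $\omega_p^{k-1}(g_0) \neq 1$, put the pseudo-representation in GMA form over $\TT^0_{\fm}$ as in \cite[Proposition $2.4.2$]{Bel} (exactly as in Lemma~\ref{gmalem}), so that $(\tau^0_\ell,\delta^0_\ell) = (\tr\rho, \det\rho)$ for $\rho : G_{\QQ,p\ell} \to A^\times$ with $A = \begin{pmatrix} \TT^0_{\fm} & B \\ C & \TT^0_{\fm}\end{pmatrix}$. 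The key point is that this GMA is \emph{reducible to $M_2$}, i.e.\ one of $B$, $C$ is free of rank one and the other multiplication data makes $A \hookrightarrow M_2(\TT^0_{\fm})$. The argument for $C$ (or $B$) being faithful and free of rank one runs as in Lemma~\ref{gmalem}, parts \eqref{bhaag4}, \eqref{bhaag5}: the reducibility locus is a proper ideal (by \cite[Theorem $5.1.1$]{W}, the biggest reducible quotient of $\TT_{\fm}$ is small, and $\TT^0_{\fm}$ is not contained in it since $\TT^0_{\fm} \neq 0$ but the Eisenstein quotient is), so $\rho$ is ``generically irreducible''; combined with the one-dimensionality of $\ker(H^1(G_{\QQ,p\ell},\omega_p^{1-k}) \to H^1(I_p,\omega_p^{1-k}))$ (Lemma~\ref{cohomlem}) and $\dim H^1(G_{\QQ,p},\omega_p^{k-1}) = 1$, one gets that $B$ is free of rank one over $\TT^0_{\fm}$, giving the embedding into $\GL_2(\TT^0_{\fm})$. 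After choosing the $B$-basis appropriately, $\rho|_{I_p}$ has the form $\begin{pmatrix} 1 & * \\ 0 & \chi_p^{k-1}\end{pmatrix}$ by part~\eqref{bhaag6} of Lemma~\ref{gmalem} applied over $\TT^0_{\fm}$, which is exactly $p$-ordinariness in Mazur's sense with $\eta_2$ unramified; and the mod-$\fm^0$ reduction is $\begin{pmatrix} 1 & * \\ 0 & \omega_p^{k-1}\end{pmatrix}$ with the $*$ unramified at $p$, hence a nonzero multiple of $c_0$, so the reduction is $\rhob_{c_0}$ up to conjugacy. This gives the deformation $\rho_{\TT^0}$ with $\tr(\rho_{\TT^0}(\frob_q)) = \tau^0_\ell(\frob_q) = T_q$ and $\tr(\rho_{\TT^0}(g)) = \tau^0_\ell(g) = 2$ for $g \in I_\ell$.

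Finally, surjectivity of $\phi_{\TT^0} : R^{\defo,\ord}_{\rhob_{c_0},k}(\ell) \to \TT^0_{\fm}$ follows formally: $\TT^0_{\fm}$ is topologically generated over $\ZZ_p$ by the $T_q = \tr(\rho_{\TT^0}(\frob_q))$ for primes $q \nmid p\ell$ (by Chebotarev, these traces generate the image of $\phi_{\TT^0}$, and the $T_q$ generate $\TT^0_{\fm}$), so the image is all of $\TT^0_{\fm}$; one also checks $\rho_{\TT^0}$ satisfies the defining conditions of $R^{\defo,\ord}_{\rhob_{c_0},k}(\ell)$ — it is an ordinary deformation of $\rhob_{c_0}$ with $\det = \chi_p^{k-1}$ (shown above) and $\tr(\rho_{\TT^0}(g)) = 2$ on $I_\ell$, so it is killed by the ideal $I$ defining $R^{\defo,\ord}_{\rhob_{c_0},k}(\ell)$ from $R^{\defo,\ord}_{\rhob_{c_0},k}$. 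The main obstacle is the descent step: verifying that the GMA $A$ over $\TT^0_{\fm}$ really embeds into $M_2(\TT^0_{\fm})$, i.e.\ that $B$ is free of rank one. This is where \cite[Theorem $5.1.1$]{W} (controlling the reducible quotient, so that $\rho$ is not ``too reducible'') together with the cohomological dimension computations of Lemma~\ref{cohomlem} and the hypothesis $\dim H^1(G_{\QQ,p},\omega_p^{k-1}) = 1$ are essential; everything else is a routine adaptation of Lemmas~\ref{gmalem}, \ref{redlem} and the argument in Lemma~\ref{surjlem}.
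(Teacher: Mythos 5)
Your proposal follows essentially the same route as the paper's proof: push $(\tau_{\ell},\delta_{\ell})$ to $\TT^0_{\fm}$ via $F$, apply Lemma~\ref{gmalem}, use reducedness of $\TT^0_{\fm}$ together with irreducibility of the pseudo-representation modulo each minimal prime (the cuspidal eigenforms $\rho_f$) to see that $B$ is free of rank one on $b_{\ell}$, conjugate/embed into $\GL_2(\TT^0_{\fm})$, read off $p$-ordinariness and the $I_{\ell}$-trace condition, identify the residual extension class with $c_0$ via Lemma~\ref{cohomlem}, and deduce surjectivity of $\phi_{\TT^0}$ from Lemma~\ref{pseudoordlem}. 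The only imprecision is the parenthetical appeal to \cite[Theorem $5.1.1$]{W} for ``generic irreducibility'' --- what is actually needed is irreducibility at \emph{every} minimal prime, which your first paragraph correctly supplies via the irreducibility of the Galois representations attached to cusp forms, exactly as in the paper --- so this does not affect the argument.
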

\begin{proof}
Composing $(\tau_{\ell},\delta_{\ell})$ with the surjective map $F : \TT_{\fm} \to \TT^0_{\fm}$ gives us a pseudo-representation $(\tau^0_{\ell},\delta^0_{\ell}) : G_{\QQ,p\ell} \to \TT^0_{\fm}$ which is a $p$-ordinary, Steinberg-or-unramified at $\ell$ deformation of $(\tr(\rhob_0),\det(\rhob_0))$ with determinant $\chi_p^{k-1}$.
Let $A=\begin{pmatrix} \TT^0_{\fm} & B \\ C & \TT^0_{\fm} \end{pmatrix}$ be the GMA over $\TT^0_{\fm}$ and $\rho : G_{\QQ,p\ell} \to A^{\times}$ the representation associated to $(\tau^0_{\ell},\delta^0_{\ell})$ by Lemma~\ref{gmalem}.

Note that $\TT^0_{\fm}$ is reduced. Let $K_0$ be the total fraction ring of $\TT^0_{\fm}$.
So, by \cite[Proposition $1.3.12$]{BC}, we can assume that $B$ and $C$ are fractional ideals of $K_0$ and the multiplication between $B$ and $C$ is given by the multiplication in $K_0$.
If $I=BC$, then $\tau^0_{\ell} \pmod{I}$ is a sum of two characters. But minimal primes of $\TT^0_{\fm}$ correspond to cuspidal eigenforms $f$ of level $\Gamma_0(\ell)$ and weight $k$ lifting $\rhob_0$. Hence, if $P$ is a minimal prime of $\TT^0_{\fm}$, then $(\tau^0_{\ell} \pmod{P},\delta^0_{\ell}\pmod{P})$ is not reducible as it is the pseudo-representation corresponding to the $p$-adic Galois representation attached to the cuspidal eigenform corresponding to $P$. 
So $I$ is not contained in any minimal prime of $\TT^0_{\fm}$. So $B \neq 0$. 

If $\alpha B =0$, then we have $\alpha I=0$. So $\alpha$ should be in every minimal prime of $\TT^0_{\fm}$. As $\TT^0_{\fm}$ is reduced, it means that $\alpha = 0$. 
Now if $\rho(i_{\ell}) = \begin{pmatrix} 1+x & b_{\ell} \\ c_{\ell} & 1-x\end{pmatrix}$, then part~\eqref{bhaag4} of Lemma~\ref{gmalem} implies that $B$ is generated by $b_{\ell}$ over $\TT^0_{\fm}$. 
Hence, $B$ is a free $\TT^0_{\fm}$-module of rank $1$ generated by $b_{\ell}$ over $\TT^0_{\fm}$.
As $I$ is not contained in any of the minimal primes of $\TT^0_{\fm}$, it follows that $b_{\ell} \in K_0^{\times}$.

So $\begin{pmatrix} b_{\ell}^{-1} & 0\\ 0 & 1\end{pmatrix} A \begin{pmatrix} b_{\ell} & 0\\ 0 & 1\end{pmatrix} = \begin{pmatrix}\TT^0_{\fm} & \TT^0_{\fm}\\ I & \TT^0_{\fm}\end{pmatrix}$.
Hence, conjugating $\rho$ by $\begin{pmatrix} b_{\ell}^{-1} & 0\\ 0 & 1\end{pmatrix}$ gives us a representation $\rho' : G_{\QQ,p\ell} \to \GL_2(\TT^0_{\fm})$.
As $\tr(\rho)=\tau^0_{\ell}$, it follows that $\tr(\rho')=\tau^0_{\ell}$. Hence, we get $\tr(\rho'(g))=2$ for all $g \in I_{\ell}$.
From part~\eqref{bhaag6} of Lemma~\ref{gmalem}, it follows that $\rho(h) = \begin{pmatrix} 1 & 0\\ c_h & \chi_p^{k-1}(h)\end{pmatrix}$ for all $h \in I_p$.
Hence, $\rho'$ is $p$-ordinary as it is a conjugate of $\rho$ by a diagonal matrix.

Therefore, $\rho' \pmod{\fm^0} = \begin{pmatrix}1 & *\\ 0 & \omega_p^{k-1}\end{pmatrix}$ where $*$ is non-zero and unramified at $p$ i.e. $\rho'\pmod{\fm^0}$ arises from an element of $\ker(H^1(G_{\QQ,p\ell},\omega_p^{1-k}) \to H^1(G_{\QQ_p},\omega_p^{1-k}))$.
By Lemma~\ref{cohomlem}, it follows that $\ker(H^1(G_{\QQ,p\ell},\omega_p^{1-k}) \to H^1(G_{\QQ_p},\omega_p^{1-k}))$ is generated by $c_0$.
Hence, conjugating $\rho'$ with a suitable diagonal matrix gives us the representation $\rho_{\TT^0} : G_{\QQ,p\ell} \to \GL_2(\TT^0_{\fm})$ satisfying the statement of the lemma.

The existence of $\rho_{\TT^0}$ implies that the map $R^{\ps,\ord}_{\rhob_0,k}(\ell) \to \TT^0_{\fm}$ induced by $(\tau^0_{\ell},\delta^0_{\ell})$ factors through $R^{\defo,\ord}_{\rhob_{c_0},k}(\ell)$ to give the map $\phi_{\TT^0}$ induced by $\rho_{\TT^0}$. Hence the surjection of $\phi_{\TT^0}$ follows from Lemma~\ref{pseudoordlem}.
\end{proof}

We now show that the space of first order deformations of $(\tr(\rhob_0),\det(\rhob_0))$ arising from $\TT_{\fm}$ always contains reducible deformations.

\begin{lem}
\label{principallem}
If $\dim(\tan(\TT_{\fm}/(p))) = 1$, then $I^{\eis}$ is principal and the pseudo-representation $(\tau_{\ell} \pmod{(p,\fm^2)},\delta_{\ell} \pmod{(p,\fm^2)}) : G_{\QQ,p\ell} \to \TT_{\fm}/(p,\fm^2)$ is reducible.
\end{lem}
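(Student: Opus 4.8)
The plan is to treat the two assertions separately. The principality of $I^{\eis}$ is a short structural remark, and the real work is the reducibility statement, which I would reduce to a computation inside a generalized matrix algebra over the dual numbers.

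\textbf{Principality of $I^{\eis}$.} Since $\dim(\tan(\TT_\fm/(p)))=1$, the maximal ideal $\fm$ is generated by $p$ together with one further element, so $\TT_\fm$ is topologically generated over $\ZZ_p$ by a single element and hence is a quotient of $\ZZ_p[[x]]$. On the other hand, the hypotheses of the setup force $\zeta(1-k)\in\ZZ_{(p)}^{\times}$, so the normalized weight $k$ Eisenstein series of level $\Gamma_0(\ell)$ has $p$-integral Fourier coefficients and therefore defines a surjective $\ZZ_p$-algebra map $\TT_\fm\to\ZZ_p$ whose kernel is exactly $I^{\eis}$ (indeed $\TT_\fm$ is topologically generated over $\ZZ_p$ by the $T_q$, $q\nmid\ell$, and $w_\ell$, and the listed generators of $I^{\eis}$ are precisely the differences of these operators from their eigenvalues on the Eisenstein series, so $\TT_\fm/I^{\eis}\cong\ZZ_p$). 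The kernel of any local $\ZZ_p$-algebra surjection $\ZZ_p[[x]]\twoheadrightarrow\ZZ_p$ is, after a change of variable, generated by $x$; pushing this forward to the quotient $\TT_\fm$ shows $I^{\eis}$ is principal.

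\textbf{Reducibility modulo $(p,\fm^2)$.} Put $R=\TT_\fm/(p,\fm^2)$, so $R\cong\FF_p[\epsilon]/(\epsilon^2)$. I would apply Lemma~\ref{gmalem} to the $p$-ordinary, $\ell$-unipotent pseudo-representation $(\tau_\ell\bmod(p,\fm^2),\delta_\ell\bmod(p,\fm^2)):G_{\QQ,p\ell}\to R$ (whose determinant is $\chi_p^{k-1}\equiv\omega_p^{k-1}$) to obtain a faithful GMA $A=\begin{pmatrix}R&B\\C&R\end{pmatrix}$ and a representation $\rho:G_{\QQ,p\ell}\to A^{\times}$ with $\rho(i_\ell)=\begin{pmatrix}1+x&b_\ell\\c_\ell&1-x\end{pmatrix}$, $x\in m_R$, $B=Rb_\ell$, and $C$ generated by $\{c_\ell,c'\}$. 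By Lemma~\ref{redlem} it suffices to prove $BC=0$. Since $\det\rho(i_\ell)=\delta_\ell(i_\ell)=\chi_p^{k-1}(i_\ell)=1$ and $m_R^2=0$, one gets $b_\ell c_\ell=-x^2=0$. If $b_\ell=0$ then $B=0$ and we are done; so suppose $b_\ell\neq 0$. Then $b_\ell c_\ell=0$ and faithfulness force $c_\ell=0$, and since $BC\subseteq m_R$ with $m_R^2=0$, faithfulness also gives $m_RB=m_RC=0$, so $B=\FF_p b_\ell$ and $C=\FF_p c'$; the only obstruction to $BC=0$ is $b_\ell c'\neq 0$.

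\textbf{Ruling out $b_\ell c'\neq 0$.} Assume $b_\ell c'\neq 0$. Using $\ell\equiv 1\pmod p$ and $c_\ell=0$, a short induction in $A$ gives $\rho(i_\ell)^{\ell}=\rho(i_\ell)$, so the relation $g_\ell i_\ell g_\ell^{-1}=i_\ell^{\ell}$ (for a lift $g_\ell$ of $\frob_\ell$) shows that $\rho(g_\ell)$ commutes with $\rho(i_\ell)$; comparing the $(1,1)$-entries of $\rho(g_\ell)\rho(i_\ell)$ and $\rho(i_\ell)\rho(g_\ell)$ yields $b_\ell c(g_\ell)=0$, hence $c(g_\ell)=0$ because $c(g_\ell)\in\FF_p c'$ and $b_\ell c'\neq 0$. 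As $\rho|_{I_\ell}$ factors through the tame $\ZZ_p$-quotient generated by $i_\ell$ and $c_\ell=0$, it follows that $\rho|_{G_{\QQ_\ell}}$ is upper triangular, so the nonzero class in $H^1(G_{\QQ,p\ell},\omega_p^{k-1})$ attached to $c'$ by Lemma~\ref{gmalem} is unramified at $\ell$, hence lies in $H^1(G_{\QQ,p},\omega_p^{k-1})=\FF_p b_0$, and moreover restricts to $0$ on $G_{\QQ_\ell}$. By Lemma~\ref{vanishlem} this forces $b_0|_{G_{\QQ_\ell}}=0$, i.e.\ $c_0\cup b_0=0$. Combined with Lemma~\ref{cuplem} and the surjection $\phi_\TT:R^{\ps,\ord}_{\rhob_0,k}(\ell)\to\TT_\fm$ of Lemma~\ref{pseudoordlem}, this is exactly the configuration that cannot occur when $\dim(\tan(R^{\ps,\ord}_{\rhob_0,k}(\ell)/(p)))=1$; and in the remaining case $\dim(\tan(R^{\ps,\ord}_{\rhob_0,k}(\ell)/(p)))=2$ with $c_0\cup b_0=0$, the principality of $I^{\eis}$ (first part), together with \cite[Theorem~$5.1.2$]{W} and, for $k=2$, \cite[Proposition~II.$9.6$]{M2}, shows that the Eisenstein ideal of $\TT^0_\fm$ is non-principal, forcing $\dim(\tan(\TT_\fm/(p)))\neq 1$ against our hypothesis. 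In every case $b_\ell c'=0$, so $BC=0$ and the pseudo-representation is reducible.

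The hard part, and the step I would expect to require the most care, is precisely the last one: once the local analysis forces $c_0\cup b_0=0$, there is no purely cohomological contradiction left, and one must feed back the comparison between $R^{\ps,\ord}_{\rhob_0,k}(\ell)$, $\TT_\fm$ and $\TT^0_\fm$ (via $\phi_\TT$, Lemma~\ref{ordlem}, the first part of this lemma, and the index computations of Wake and Mazur) to see that the hypothesis $\dim(\tan(\TT_\fm/(p)))=1$ already excludes $c_0\cup b_0=0$. Everything else is bookkeeping with GMAs over $\FF_p[\epsilon]/(\epsilon^2)$.
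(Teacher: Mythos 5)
Your first two steps are sound: the principality argument is the paper's own, and the GMA analysis over $\FF_p[\epsilon]/(\epsilon^2)$ correctly reduces everything to excluding $b_\ell c'\neq 0$, with your local computation at $\ell$ showing that an irreducible first-order deformation would force $b_0|_{G_{\QQ_\ell}}=0$, hence $c_0\cup b_0=0$ (this parallels the proof of Lemma~\ref{cuplem}, via the local criterion from the proof of Lemma~\ref{vanishlem}). The gap is in the very last step, the one you yourself flag as the hard part. You claim that in the case $\dim(\tan(R^{\ps,\ord}_{\rhob_0,k}(\ell)/(p)))=2$ with $c_0\cup b_0=0$, the principality of $I^{\eis}$ together with \cite[Theorem $5.1.2$]{W} and \cite[Proposition II.$9.6$]{M2} shows that $I^{\eis,0}$ is non-principal. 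As stated this is incoherent (principality of $I^{\eis}$ immediately gives principality of its image $I^{\eis,0}$ under $F$), and the results you cite only compute the index $\TT^0_{\fm}/I^{\eis,0}\simeq \ZZ/p^{\nu+v_p(k)}\ZZ$; they say nothing about principality. The implication ``$c_0\cup b_0=0$ implies $I^{\eis}$ is not principal'' is, for $k>2$, precisely the content of Theorem~\ref{thm2} and Theorem~\ref{thm3} of the paper, whose proofs require $p\mid k$ (with a Jochnowitz weight-raising argument otherwise), use \cite[Theorem $5.1.1$]{W} describing the reducibility quotient of $\TT_{\fm}$, and — crucially — invoke Lemma~\ref{principallem} itself, so appealing to it here is circular. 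For $k=2$ the claimed non-principality is simply false: Mazur proves $I^{\eis}$ is always principal, and $c_0\cup b_0=0$ can certainly occur, so in that case your argument produces no contradiction at all.

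What is actually needed to close the case $b_\ell c'\neq 0$ is an arithmetic input that your proposal never uses: for $k>2$ the paper quotes \cite[Theorem $5.1.1$]{W}, which says the reducibility ideal of $(\tau_\ell,\delta_\ell)$ is contained in $(p,\fm^2)$ (so the first-order deformation is reducible outright, with no case distinction on the cup product); for $k=2$ the paper runs your GMA computation globally, picks by Chebotarev a prime $q\equiv 1\pmod p$ not split in $K_{c_0}$ and $K_{b_0}$ so that $T_q-q-1$ would generate $I^{\eis}$, and contradicts \cite[Proposition II.$16.1$]{M2}, since such a $q$ is not a good prime in Mazur's sense. Without one of these inputs (or a genuine substitute), the irreducible configuration with $c_0\cup b_0=0$ cannot be ruled out, and the reducibility assertion of the lemma remains unproved.
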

\begin{proof}
If $\dim(\tan(\TT_{\fm}/(p))) = 1$, then $\TT_{\fm}$ is a quotient of $\ZZ_p\llbracket X \rrbracket$. As $\TT_{\fm}/I^{\eis} \simeq \ZZ_p$, it follows that $I^{\eis}$ is principal.
For $k > 2$, the reducibility of $(\tau_{\ell} \pmod{(p,\fm^2)},\delta_{\ell} \pmod{(p,\fm^2)})$ follows from \cite[Theorem $5.1.1$]{W}.

Suppose $k=2$. Then the lemma follows from work of Calegari and Emerton (by combining \cite[Proposition 3.12]{CE} and \cite[Proposition 5.5]{CE}).
However, we will give a different proof here as we are not using the deformation conditions studied by them.
It follows from \cite{M2} that $\dim(\tan(\TT_{\fm}/(p))) = 1$ and $I^{\eis}$ is principal.
So $\TT_{\fm}/(p,\fm^2) \simeq \FF_p[\epsilon]/(\epsilon^2)$.
Denote $\TT_{\fm}/(p,\fm^2)$ by $R$ and $(\tau_{\ell} \pmod{(p,\fm^2)},\delta_{\ell} \pmod{(p,\fm^2)})$ by $(t,d)$.
Suppose $(t,d)$ is not reducible.

Let $A$ be the faithful GMA over $R$ and $\rho : G_{\QQ,p\ell} \to A^{\times}$ be the representation associated to $(t,d)$ by Lemma~\ref{gmalem}.
From the proof of Lemma~\ref{cuplem}, it follows that there exist non-zero constants $\alpha, \beta, \gamma \in \FF_p$ such that if $\rho(g) = \begin{pmatrix} a_g & b_g \\ c_g & d_g \end{pmatrix}$, then $b_gc_g=\alpha\beta\gamma\omega_p^{k-1}(g)c_0(g)b_0(g)\epsilon$ for all $g \in G_{\QQ,p\ell}$.
Let $K_{c_0}$ be the extension of $\QQ$ fixed by the kernel of the representation $\begin{pmatrix} \omega_p^{1-k} & *\\ 0 & 1\end{pmatrix}$ defined by $c_0$ and $K_{b_0}$ be the extension of $\QQ$ fixed by the kernel of the representation $\begin{pmatrix} \omega_p^{k-1} & *\\ 0 & 1\end{pmatrix}$ defined by $b_0$.

By Chebotarev density theorem, there exists a prime $q$ such that $q \nmid p\ell$, $p \mid q-1$ and $q$ is not totally split in both $K_{c_0}$ and $K_{b_0}$.
This means that $c_0(\frob_q) \neq 0$ and $b_0(\frob_q) \neq 0$.
So if $\rho(\frob_q) = \begin{pmatrix} a & b\\ c & d\end{pmatrix}$, then $bc \neq 0$. Now $a = 1+x\epsilon$, $d = 1 +y\epsilon$ with $x,y \in \FF_p$ and $\det(\rho(\frob_q))=1$.
Hence, it follows that $x+y \neq 0$. Therefore, $\tr(\rho(\frob_q)) - q-1 \neq 0$ and hence, it generates the cotangent space of $R$. 
The image of $T_q$ under the surjective map $\TT_{\fm} \to R$ is $\tr(\rho(\frob_q))$.
Hence, it follows that $p$ and $T_q-q-1$ generate the maximal ideal $\fm$ of $\TT_{\fm}$.
 Note that $T_q-q-1 \in I^{\eis}$ and $\TT_{\fm}/(p, T_q-q-1) \simeq \FF_p$.
As $\TT_{\fm}/I^{\eis} \simeq \ZZ_p$, we get that $T_q-q-1$ generates $I^{\eis}$.
Since $q$ is not a nice prime in the sense of Mazur (\cite{M2}), \cite[Proposition II.$16.1$]{M2} gives a contradiction.
Hence, $(t,d)$ is reducible which implies the lemma.
\end{proof}

We will now briefly review modular forms modulo $p$ as they form a crucial ingredient of the proof of Theorem~\ref{thm3} and Corollary~\ref{main}.

Let $i > 0$ be an even integer and $M_i(\ell,\ZZ_p)$ be the space of classical modular forms of level $\Gamma_0(\ell)$ and weight $i$ with Fourier coefficients in $\ZZ_p$. Using the $q$-expansion principle, we identify $M_i(\ell,\ZZ_p)$ with a submodule of $\ZZ_p \llbracket q \rrbracket$.
Let $M_i(\ell,\FF_p)$ be the image of $M_i(\ell,\ZZ_p)$ under the natural surjective map $\ZZ_p \llbracket q \rrbracket \to \FF_p \llbracket q \rrbracket$ obtained by reducing the coefficients of power series modulo $p$. So $M_i(\ell,\FF_p)$ is the space of modular forms modulo $p$ of weight $i$ and level $\Gamma_0(\ell)$ (in the sense of Serre and Swinnerton-Dyer).

Let $\TT_i$ be the $\ZZ_p$-subalgebra of $\text{End}_{\ZZ_p}(M_i(\ell,\ZZ_p))$ generated by the Hecke operators $T_q$ for primes $q \neq \ell$ and the Atkin--Lehner operator $w_{\ell}$ at $\ell$.
So, under the notation developed above, we have $\TT_k = \TT$.
Let $\mathfrak{n}$ be a maximal ideal of $\TT_i$ and let $(\TT_i)_{\mathfrak{n}}$ be the completion of $\TT_i$ at $\mathfrak{n}$.
As $\TT_i$ is a finite $\ZZ_p$-module, \cite[Corollary 7.6]{E} implies that $(\TT_i)_{\mathfrak{n}}$ is the localization of $\TT_i$ at $\mathfrak{n}$ and $\TT_i = (\TT_i)_{\mathfrak{n}} \times S$, where $S$ is the product of localizations of $\TT_i$ at maximal ideals other than $\mathfrak{n}$.

Let $M_i(\ell,\ZZ_p)_{\mathfrak{n}}$ be the localization of $M_i(\ell,\ZZ_p)$ at $\mathfrak{n}$.
From the product decomposition of $\TT_i$ given in the previous paragraph, we conclude that $M_i(\ell,\ZZ_p)_{\mathfrak{n}}$ is a submodule of $M_i(\ell,\ZZ_p)$ and moreover, it is a direct summand of $M_i(\ell,\ZZ_p)$.
Note that $(\TT_i)_{\mathfrak{n}}$ is the largest quotient of $\TT_i$ acting faithfully on $M_i(\ell,\ZZ_p)_{\mathfrak{n}}$.

The action of $\TT_i$ on $M_i(\ell,\ZZ_p)$ also gives an action of $\TT_i$ on $M_i(\ell,\FF_p)$ and this action factors through $\TT_i/(p)$.
Let $M_i(\ell,\FF_p)_{\mathfrak{n}}$ be the localization of $M_i(\ell,\FF_p)$ at the maximal ideal $\mathfrak{n}$. 
As $\TT_i/(p)$ is Artinian, it follows that $M_i(\ell,\FF_p)_{\mathfrak{n}}$ is the submodule of $M_i(\ell,\FF_p)$ consisting of generalized eigenvectors corresponding to the system of eigenvalues defined by $\mathfrak{n}$. In other words, $M_i(\ell,\FF_p)_{\mathfrak{n}} = \{ f \in M_i(\ell,\FF_p) \mid \mathfrak{n}^k.f=0 \text{ for some } k > 0 \}$.

Under the mod $p$ reduction map $M_i(\ell,\ZZ_p) \to M_i(\ell,\FF_p)$, $M_i(\ell,\ZZ_p)_{\mathfrak{n}}$ gets mapped onto $M_i(\ell,\FF_p)_{\mathfrak{n}}$.
Thus the action of $\TT_i$ on $M_i(\ell,\FF_p)_{\mathfrak{n}}$ factors through $(\TT_i)_{\mathfrak{n}}/(p)$.
\begin{lem}
\label{quotient}
If $(p-1) \nmid i$, then $(\TT_i)_{\mathfrak{n}}/(p)$ is the largest quotient of $(\TT_i)_{\mathfrak{n}}$ acting faithfully on $M_i(\ell,\FF_p)_{\mathfrak{n}}$.
\end{lem}

\begin{proof}
As $p$ is odd and $w_{\ell}$ is an involution, it follows that the image of $w_{\ell}$ in $(\TT_i)_{\mathfrak{n}}$ is either $1$ or $-1$.
Since $i > 0$, we get, from \cite[Corollary 2.1.4]{Oh}, a perfect pairing $$G: (\TT_i)_{\mathfrak{n}} \times M_i(\ell,\ZZ_p)_{\mathfrak{n}} \to \ZZ_p$$ which sends $(T,f)$ to $a_1(Tf)$, where $a_1(Tf)$ is the coefficient of $q$ in the $q$-expansion of $Tf$.

Let $\overline{\mathbb{T}_i}$ be the largest quotient of $(\TT_i)_{\mathfrak{n}}$ acting faithfully on $M_i(\ell,\FF_p)_{\mathfrak{n}}$.
As $(p-1) \nmid i$, we know that no non-zero modular form in $M_i(\ell,\FF_p)$ has \emph{constant} Fourier expansion i.e. $M_i(\ell,\FF_p) \setminus \{0\} \subset \FF_p \llbracket q \rrbracket \setminus \FF_p$ (see the discussion on Page $459$ of \cite{G} for more details).
Hence, by applying \cite[Corollary 2.1.4]{Oh} again, we get that the map $$\bar{G}: \overline{\TT_i} \times M_i(\ell,\FF_p)_{\mathfrak{n}} \to \FF_p$$ which sends $(T,\bar{f})$ to $a_1(T\bar{f})$ is a perfect pairing.

As $G$ is a perfect pairing, we get that the $\ZZ_p$-ranks of $(\TT_i)_{\mathfrak{n}}$ and $M_i(\ell,\ZZ_p)_{\mathfrak{n}}$ are equal.
Recall that $M_i(\ell,\FF_p)_{\mathfrak{n}}$ is the reduction of $M_i(\ell,\ZZ_p)_{\mathfrak{n}}$ modulo $p$.
So, we conclude that the $\FF_p$-dimension of $M_i(\ell,\FF_p)_{\mathfrak{n}}$ is same as the $\ZZ_p$-rank of $M_i(\ell,\ZZ_p)_{\mathfrak{n}}$.
Since $\bar{G}$ is a perfect pairing, the $\FF_p$-dimensions of $\overline{\TT_i}$ and $M_i(\ell,\FF_p)_{\mathfrak{n}}$ are the same.
Therefore, the $\FF_p$-dimensions of $\overline{\TT_i}$  and $(\TT_i)_{\mathfrak{n}}/(p)$ are equal.
As $(\TT_i)_{\mathfrak{n}}/(p)$ surjects onto $\overline{\TT_i}$, we infer that $(\TT_i)_{\mathfrak{n}}/(p) \simeq \overline{\TT_i}$.
\end{proof}

We will now relate the principality of $I^{\eis,0}$ with that of $I^{\eis}$. This result will be used in the proof of Theorem~\ref{thm2}.
Recall that $F : \TT_{\fm} \to \TT^0_{\fm}$ is the map induced from the natural surjective map $\TT \to \TT^0$.
\begin{lem}
\label{principal}
$I^{\eis}$ is principal if and only if $I^{\eis,0}$ is principal.
\end{lem}
\begin{proof}
Since $I^{\eis,0}$ is the ideal generated by $F(I^{\eis})$, it follows that $I^{\eis,0}$ is principal if $I^{\eis}$ is principal.

Now suppose $I^{\eis,0}$ is principal.
From Lemma~\ref{tanlem} and Lemma~\ref{pseudoordlem}, we know that there exists a surjective map $f : \ZZ_p \llbracket x,y \rrbracket \to \TT_{\fm}$.
Moreover, we can choose this map so that $f(x), f(y) \in I^{\eis}$ and hence, $I^{\eis} = (f(x),f(y))$.
As $I^{\eis,0}$ is principal and $F$ is surjective, we get, using Nakayama's lemma, that $I^{\eis}$ is either $(F(f(x)))$ or $(F(f(y)))$.
Hence, $\ker(F)$ contains either $f(y)-rf(x)$ for some $r \in \TT_{\fm}$ or $f(x)-r'f(y)$ for some $r' \in \TT_{\fm}$.

Suppose $f(y)-rf(x) \in \ker(F)$ for some $r \in \TT_{\fm}$.
Recall that $f(y)-rf(x) \in I^{\eis}$.
Suppose the $\ZZ_p$-rank of $M_k(\ell,\ZZ_p)_{\fm}$ is $d$.
Note that, the Eisenstein subspace $E_k(\ell,\ZZ_p)_{\fm}$ of $M_k(\ell,\ZZ_p)_{\fm}$ has $\ZZ_p$-rank $1$ and the cuspidal subspace $S_k(\ell,\ZZ_p)_{\fm}$ of $M_k(\ell,\ZZ_p)_{\fm}$ has $\ZZ_p$-rank $d-1$ (see \cite[Section 2.2]{W} for more details).

Now $E_k(\ell,\ZZ_p)_{\fm} \cap S_k(\ell,\ZZ_p)_{\fm} = \{0\}$.
Hence, if $g \in M_k(\ell,\ZZ_p)_{\fm}$, then there exist $g' \in E_k(\ell,\ZZ_p)_{\fm}$, $h \in S_k(\ell,\ZZ_p)_{\fm}$ and an integer $n \geq 0$ such that $g =\dfrac{g'-h}{p^n}$.
Thus if $\sigma \in I^{\eis}$, then $\sigma(g')=0$, $\sigma(h) \in S_k(\ell,\ZZ_p)_{\fm}$ and $\sigma(g) \in M_k(\ell,\ZZ_p)_{\fm}$.
Therefore, we conclude that $\sigma(g) \in S_k(\ell,\ZZ_p)_{\fm}$ and hence, $\sigma.M_k(\ell,\ZZ_p)_{\fm} \subset S_k(\ell,\ZZ_p)_{\fm}$.

Since $f(y)-rf(x) \in I^{\eis} \cap \ker(F)$, $(f(y)-rf(x))^2.M_k(\ell,\ZZ_p)_{\fm}=0$ i.e. $(f(y)-rf(x))^2=0$.
Recall that $\TT_{\fm}$ is reduced.
Therefore, we get that $f(y)-rf(x)=0$. As $I^{\eis}=(f(x),f(y))$, we conclude that $I^{\eis}$ is principal.

Using the same argument as above, we get that if $f(x)-r'f(y) \in \ker(F)$ for some $r' \in \TT_{\fm}$, then $I^{\eis}$ is a principal ideal.
This finishes the proof of the lemma.
\end{proof}

We will now consider $I^{\eis}$ as a $\TT_{\fm}$-module and determine its annihilator.
This result will be crucially used in the proofs of Theorem~\ref{thm2} and Part~\eqref{item1} of Theorem~\ref{thmb}.

\begin{lem}
\label{annih}
The annihilator of the $\TT_{\fm}$-module $I^{\eis}$ is $\ker(F)$.
\end{lem}
\begin{proof}
Suppose $\alpha \in \ker(F)$ and $\beta \in I^{\eis}$.
Recall, from the proof of Lemma~\ref{principal}, that $\beta M_k(\ell,\ZZ_p)_{\fm} \subset S_k(\ell,\ZZ_p)_{\fm}$.
So $\alpha\beta M_k(\ell,\ZZ_p)_{\fm} =0$.
Therefore, it follows that $\alpha\beta=0$ for all $\beta \in I^{\eis}$ and $\alpha \in \ker(F)$.

On the other hand, suppose $\alpha \in \TT_{\fm}$ and $\alpha I^{\eis}=0$.
As $\TT_{\fm}$ is reduced, $\alpha \not\in I^{\eis}$.
Since $\TT_{\fm}$ has Krull dimension $1$ and $\TT_{\fm}/I^{\eis} \simeq \ZZ_p$, $I^{\eis}$ is a minimal prime ideal of $\TT_{\fm}$.
Let $\mathcal{S}$ be the set of all minimal prime ideals of $\TT_{\fm}$ which are different from $I^{\eis}$.
As $\TT^0_{\fm}$ is non-zero, $\mathcal{S}$ is non-empty.
So, if $P \in \mathcal{S}$, then $I^{\eis} \not\subset P$ and hence, $\alpha \in P$. Thus, $\alpha \in \cap_{P \in \mathcal{S}} P$.

Now we have $I^{\eis}\ker(F)=0$.
Therefore, if $P \in \mathcal{S}$, then $\ker(F) \subset P$.
As $\TT_{\fm}$ is a local ring of Krull dimension $1$, $\mathcal{S}$ is the set of all primes of $\TT_{\fm}$ which are minimal over $\ker(F)$.
Since $\TT^0_{\fm} \simeq \TT_{\fm}/\ker(F)$ is reduced, we conclude that $\cap_{P \in \mathcal{S}} P =\ker(F)$.
Therefore, we get that $\alpha \in \ker(F)$.
\end{proof}

\section{Main results}
\label{mainsec}
We are now ready to prove our main results. We know, from \cite{M2} and \cite{W}, that $\dim(\tan(\TT_{\fm}/(p))) \geq 1$.

\begin{thm}
\label{thm1}
Suppose $\dim(\tan(\TT_{\fm}/(p))) = 1$. Then $\rank_{\ZZ_p}(\TT^0_{\fm}) = 1$ if and only if $c_0 \cup a_0 \neq 0$.
\end{thm}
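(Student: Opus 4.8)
The statement is an equivalence; since $\TT^0_{\fm}\neq 0$ forces $\rank_{\ZZ_p}(\TT^0_{\fm})\geq 1$, I would prove instead that $\rank_{\ZZ_p}(\TT^0_{\fm})>1\iff c_0\cup a_0=0$, by the two implications below. The common framework is the square of surjections $\phi_{\TT}:R^{\ps,\ord}_{\rhob_0,k}(\ell)\to\TT_{\fm}$, $\phi:R^{\ps,\ord}_{\rhob_0,k}(\ell)\to R^{\defo,\ord}_{\rhob_{c_0},k}(\ell)$, $\phi_{\TT^0}:R^{\defo,\ord}_{\rhob_{c_0},k}(\ell)\to\TT^0_{\fm}$, $F:\TT_{\fm}\to\TT^0_{\fm}$, which commutes ($\phi_{\TT^0}\circ\phi=F\circ\phi_{\TT}$) because by Lemma~\ref{ordlem} the map $R^{\ps,\ord}_{\rhob_0,k}(\ell)\to\TT^0_{\fm}$ attached to $(\tau^0_\ell,\delta^0_\ell)$ factors through $\rho_{\TT^0}$. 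Since $\dim(\tan(\TT_{\fm}/(p)))=1$, Lemma~\ref{principallem} gives that $I^{\eis}$ is principal, that $\TT_{\fm}/I^{\eis}\simeq\ZZ_p$, and that $(\tau_\ell,\delta_\ell)\bmod(p,\fm^2)$ is reducible; as $\TT_{\fm}$ is $\ZZ_p$-flat with $\TT_{\fm}\neq\ZZ_p$ (it has both an Eisenstein and a cuspidal quotient), $\TT_{\fm}/(p,\fm^2)\simeq\FF_p[\epsilon]/(\epsilon^2)$, so this first order deformation is nontrivial, and it is $p$-ordinary, $\ell$-unipotent with determinant $\omega_p^{k-1}$ because $(\tau_\ell,\delta_\ell)$ is.

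Assume first $\rank_{\ZZ_p}(\TT^0_{\fm})>1$. As $\TT^0_{\fm}$ is a $\ZZ_p$-flat quotient of $\TT_{\fm}$ other than $\ZZ_p$, we get $\dim(\tan(\TT^0_{\fm}/(p)))=1$, hence $\TT^0_{\fm}/(p,(\fm^0)^2)\simeq\FF_p[\epsilon]/(\epsilon^2)$. Pushing the reducible first order deformation above along $F$, the pseudo-representation $(\tau^0_\ell,\delta^0_\ell)\bmod(p,(\fm^0)^2)$ is a nontrivial, reducible, $p$-ordinary, $\ell$-unipotent first order deformation of $(\tr(\rhob_0),\det(\rhob_0))$ with determinant $\omega_p^{k-1}$; by Lemma~\ref{ordlem} it arises from the reduction of $\rho_{\TT^0}$, a $p$-ordinary deformation of $\rhob_{c_0}$. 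Lemma~\ref{infinlem} then forces $c_0\cup a_0=0$.

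Conversely, assume $c_0\cup a_0=0$; I would show $\phi_{\TT}(\ker\phi)\subseteq\ker F\cap(p,\fm^2)$ and deduce the claim. The inclusion in $\ker F$ is immediate from $\phi_{\TT^0}\circ\phi=F\circ\phi_{\TT}$. For the inclusion in $(p,\fm^2)$: by Lemma~\ref{infinlem} (applied with $c_0\cup a_0=0$) the reducible deformation $(\tau_\ell,\delta_\ell)\bmod(p,\fm^2)$ arises from a $p$-ordinary representation $\rho$ deforming $\rhob_{c_0}$, which is automatically $\ell$-unipotent and hence factors through $R^{\defo,\ord}_{\rhob_{c_0},k}(\ell)$; the resulting map $R^{\ps,\ord}_{\rhob_0,k}(\ell)\to\FF_p[\epsilon]/(\epsilon^2)$ induces the same pseudo-representation as $R^{\ps,\ord}_{\rhob_0,k}(\ell)\xrightarrow{\phi_{\TT}}\TT_{\fm}\to\TT_{\fm}/(p,\fm^2)$, so these two maps coincide by the universal property of $R^{\ps,\ord}_{\rhob_0,k}(\ell)$, whence $\phi_{\TT}(\ker\phi)\subseteq(p,\fm^2)$. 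Now suppose for contradiction $\rank_{\ZZ_p}(\TT^0_{\fm})=1$, so $\TT^0_{\fm}=\ZZ_p$. From Lemma~\ref{eisenlem}, surjectivity of $\phi_{\TT^0}$, and $\phi_{\TT^0}(I_0)=I^{\eis,0}$ we get $p^{\nu+v_p(k)}=|\TT^0_{\fm}/I^{\eis,0}|\leq|R^{\defo,\ord}_{\rhob_{c_0},k}(\ell)/I_0|\leq p^{\nu+v_p(k)}$, the equality $|\TT^0_{\fm}/I^{\eis,0}|=p^{\nu+v_p(k)}$ being \cite[Theorem $5.1.2$]{W} (resp. \cite[Proposition II.$9.6$]{M2} when $k=2$); so all inequalities are equalities and, writing $N=\nu+v_p(k)\geq 1$, the ring $\TT_{\fm}$ embeds in $\ZZ_p\times\TT^0_{\fm}=\ZZ_p\times\ZZ_p$ as $\{(a,b):a\equiv b\bmod p^N\}$, with $I^{\eis}$ and $\ker F$ the kernels of the first and second projections. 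A direct computation in this ring gives $(p,\fm^2)=p\TT_{\fm}$ and $\ker F\cap(p,\fm^2)=p^{N+1}\ZZ_p\times 0$, so the image of $\phi_{\TT}(\ker\phi)$ under $\TT_{\fm}\to\TT_{\fm}/I^{\eis}\simeq\ZZ_p$ is contained in $p^{N+1}\ZZ_p$. On the other hand $\phi_{\TT}$ induces a surjection $R^{\defo,\ord}_{\rhob_{c_0},k}(\ell)/I_0\twoheadrightarrow\TT_{\fm}/(I^{\eis}+\phi_{\TT}(\ker\phi))$, whose source has order $\leq p^N$ by Lemma~\ref{eisenlem}, so $\TT_{\fm}/(I^{\eis}+\phi_{\TT}(\ker\phi))=\ZZ_p/(\text{that image})$ has order $\leq p^N$ and the image therefore contains $p^N\ZZ_p$, contradicting $p^{N+1}\ZZ_p\subsetneq p^N\ZZ_p$. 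Hence $\rank_{\ZZ_p}(\TT^0_{\fm})>1$.

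The substantive part is the converse. The main obstacle is the inclusion $\phi_{\TT}(\ker\phi)\subseteq(p,\fm^2)$: it rests on translating the cup product hypothesis, through Lemmas~\ref{inflem} and \ref{infinlem}, into the statement that the (unique up to scalar) reducible first order pseudo-deformation is realised by an honest representation of $\rhob_{c_0}$. The final contradiction also genuinely needs the \emph{exact} value $p^{\nu+v_p(k)}$ of the Eisenstein index, not merely the upper bound supplied by Lemma~\ref{eisenlem}.
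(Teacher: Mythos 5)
Your proposal is correct, and its skeleton is the one the paper uses: the same reduction to the two implications via Lemma~\ref{principallem} (principality of $I^{\eis}$ and reducibility of the first-order deformation), Lemma~\ref{ordlem} plus Lemma~\ref{inflem}/\ref{infinlem} for the direction $\rank_{\ZZ_p}(\TT^0_{\fm})>1\Rightarrow c_0\cup a_0=0$, and for the converse the same two inclusions $\phi_{\TT}(\ker\phi)\subseteq\ker F$ and $\phi_{\TT}(\ker\phi)\subseteq(p,\fm^2)$ combined with Lemma~\ref{eisenlem} and the exact Eisenstein index $p^{\nu+v_p(k)}$ from \cite[Theorem 5.1.2]{W} and \cite[Proposition II.9.6]{M2}. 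Where you diverge is only the endgame of the converse: the paper stays inside $\TT_{\fm}$, using the principal generator $x_0$ of $I^{\eis}$ to show $\ker F=(x_0-p^{\nu+v_p(k)}u)$ and then factoring an element $p^e+x_0\alpha\in\phi_{\TT}(\ker\phi)$ to conclude $\fm=(p,p^e+x_0\alpha)\subseteq(p,x_0^2)$, contradicting $\dim(\tan(\TT_{\fm}/(p)))=1$; you instead identify $\TT_{\fm}$ with the congruence ring $\{(a,b)\in\ZZ_p\times\ZZ_p: a\equiv b \bmod p^{\nu+v_p(k)}\}$ and run an order count against the surjection $R^{\defo,\ord}_{\rhob_{c_0},k}(\ell)/I_0\twoheadrightarrow\TT_{\fm}/(I^{\eis}+\phi_{\TT}(\ker\phi))$. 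Your ideal computations in that ring ($(p,\fm^2)=p\TT_{\fm}$, $\ker F\cap(p,\fm^2)=p^{\nu+v_p(k)+1}\ZZ_p\times 0$) are correct, but the identification itself rests on the injectivity of $\TT_{\fm}\to\ZZ_p\times\TT^0_{\fm}$, which you assert rather than prove; it is standard (it follows from reducedness/faithfulness of $\TT_{\fm}$, which the paper invokes elsewhere, together with the congruence number $|\TT^0_{\fm}/I^{\eis,0}|=p^{\nu+v_p(k)}$), but note that the paper's manipulation with $x_0$ deliberately avoids needing this input, getting everything from principality of $I^{\eis}$ and $\TT^0_{\fm}=\ZZ_p$ alone. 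So your route buys a more transparent picture of where the contradiction lives (the congruence module), at the cost of one extra standard structural fact that should be justified; otherwise the two arguments use identical ingredients, including the genuinely necessary exact value of the Eisenstein index, as you correctly emphasize.
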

\begin{proof}
As $\dim(\tan(\TT_{\fm}/(p))) = 1$, Lemma~\ref{principallem} implies that $I^{\eis}$ is principal. Let $x_0 \in \TT_{\fm}$ be a generator of $I^{\eis}$.
Then $\fm = (p,x_0)$ and $(p,\fm^2)=(p,x_0^2)$. So $\TT_{\fm}/(p,\fm^2) = \TT_{\fm}/(p,x_0^2) \simeq \FF_p[\epsilon]/(\epsilon^2)$.
Let $f : \TT_{\fm} \to \FF_p[\epsilon]/(\epsilon^2)$ be the map obtained by composing the isomorphism obtained above with the natural surjective map $\TT_{\fm} \to \TT_{\fm}/(p,\fm^2)$.
Now Lemma~\ref{principallem} also implies that the pseudo-representation $(t,d) : G_{\QQ,p\ell} \to \FF_p[\epsilon]/(\epsilon^2)$ obtained by composing $(\tau_{\ell},\delta_{\ell})$ with the surjective map $f : \TT_{\fm} \to \FF_p[\epsilon]/(\epsilon^2)$ is reducible.

If $\rank_{\ZZ_p}(\TT^0_{\fm}) > 1$, then $\dim(\tan(\TT^0_{\fm}/(p))) \geq 1$.
As $\dim(\tan(\TT_{\fm}/(p))) = 1$, we get that $\dim(\tan(\TT^0_{\fm}/(p))) = 1$.
Therefore, the map $f$ factors through $\TT^0_{\fm}$. Thus Lemma~\ref{ordlem} implies that $(t,d)$ arises from a non-trivial first order $p$-ordinary deformation of $\rhob_{c_0}$ with determinant $\omega_p^{k-1}$. Hence, Lemma~\ref{inflem} implies that $c_0 \cup a_0=0$.

Now suppose $c_0 \cup a_0 =0$. Recall that $\phi : R^{\ps,\ord}_{\rhob_0,k}(\ell) \to R^{\defo,\ord}_{\rhob_{c_0},k}(\ell)$ is the surjective morphism induced by $(\tr(\rho^{\univ,\ell}), \det(\rho^{\univ,\ell}))$.
As $(t,d)$ is reducible, Lemma~\ref{infinlem} implies that there exists a map $f' : R^{\defo,\ord}_{\rhob_{c_0},k}(\ell) \to \FF_p[\epsilon]/(\epsilon^2)$ such that the following diagram commutes:
\[
\begin{tikzcd} 
R^{\ps,\ord}_{\rhob_0,k}(\ell)\arrow{r}{\phi_{\TT}} \arrow{dr}{\phi} & \TT_{\fm} \arrow{r}{f} & \FF_p[\epsilon]/(\epsilon^2)\\
& R^{\defo,\ord}_{\rhob_{c_0},k}(\ell) \arrow{ur}{f'}
\end{tikzcd}
\]
Hence, $\phi_{\TT}(\ker(\phi)) \subset (p,x_0^2)$.

On the other hand, Lemma~\ref{ordlem} implies that the following diagram commutes:
\[
\begin{tikzcd} 
R^{\ps,\ord}_{\rhob_0,k}(\ell)\arrow{r}{\phi_{\TT}} \arrow{dr}{\phi} & \TT_{\fm} \arrow{r}{F} & \TT^0_{\fm} \\
& R^{\defo,\ord}_{\rhob_{c_0},k}(\ell) \arrow{ur}{\phi_{\TT^0}}
\end{tikzcd}
\]
So $\phi_{\TT}(\ker(\phi)) \subset \ker(F)$.

Now suppose $\rank_{\ZZ_p}(\TT^0_{\fm}) = 1$. As $I^{\eis}=(x_0)$, \cite[Theorem $5.1.2$]{W} and \cite[Proposition II.$9.6$]{M2} imply that $F(x_0)=p^{\nu+v_p(k)}.u$ for some $u \in \ZZ_p^{\times}$ (see \cite[Remark $5.1.3$]{W} for more details).
Since the image of $x_0$ in $\TT_{\fm}/(p)$ generates its cotangent space, it follows that $\ker(F) = (x_0-p^{\nu+v_p(k)}.u)$.

Note that $J_0$, which is the Eisenstein ideal of $R^{\ps,\ord}_{\rhob_0,k}(\ell)$, is the inverse image of $I^{\eis}$ under the surjective map $\phi_{\TT}$. 
Then $I_0$ is generated by $\phi(J_0)$ and Lemma~\ref{eisenlem} implies that there is a $y \in \ker(\phi)$ such that $y = p^e +y_0$ with $y_0 \in J_0$ and $e \leq \nu+v_p(k)$.
So $\phi_{\TT}(y) = p^e + \phi_{\TT}(y_0) \in \ker(F)$. As $y_0 \in J_0$, $\phi_{\TT}(y_0) \in (x_0)$ and hence, $\phi_{\TT}(y_0)=x_0\alpha$ for some $\alpha \in \TT_{\fm}$.
So $p^e + x_0\alpha  \in (x_0-p^{\nu+v_p(k)}.u)$.
As $\TT_{\fm}/(x_0) \simeq \ZZ_p$, it follows that $p^e+x_0\alpha = (x_0-p^{\nu+v_p(k)}.u)(x_0\beta + p^{e'}u')$ for some $e' \geq 0$, $u' \in \ZZ_p^{\times}$ and $\beta \in \TT_{\fm}$ and hence, $p^e = -uu'p^{\nu+v_p(k)+e'}$.

Since $e \leq \nu + v_p(k)$, we get that $e'=0$ and hence, $p^{e'}u'+x_0\beta \in \TT_{\fm}^{\times}$.
Therefore, we conclude that $\fm = (p, p^e+x_0\alpha)$.
But $p^e+x_0\alpha \in \phi_{\TT}(\ker(\phi)) \subset (p,x_0^2)$. Hence, we see that $p$ generates the maximal ideal of $\TT_{\fm}$ which is a contradiction as $\dim(\tan(\TT_{\fm}/(p)))=1$.
Therefore, we conclude that if $c_0 \cup a_0=0$, then $\rank_{\ZZ_p}(\TT^0_{\fm}) > 1$. This finishes the proof of the theorem.
\end{proof}

\begin{cor}
\label{cor1}
If $k=2$, then $\rank_{\ZZ_p}(\TT^0_{\fm}) = 1$ if and only if $c_0 \cup a_0 \neq 0$.
\end{cor}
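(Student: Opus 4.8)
The plan is to deduce Corollary~\ref{cor1} immediately from Theorem~\ref{thm1}. Theorem~\ref{thm1} already asserts exactly the equivalence in the corollary, but under the extra running hypothesis $\dim(\tan(\TT_{\fm}/(p))) = 1$; so the whole task reduces to checking that this hypothesis is automatic when $k = 2$.

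To verify it, I would argue as follows. The lower bound $\dim(\tan(\TT_{\fm}/(p))) \geq 1$ is the general fact noted at the beginning of Section~\ref{mainsec} (coming from \cite{M2} and \cite{W}). For the upper bound, the key input is Mazur's theorem from \cite{M2} that in weight $2$ the Eisenstein ideal $I^{\eis}$ of $\TT_{\fm}$ is principal --- this is precisely what is invoked in the $k=2$ case of the proof of Lemma~\ref{principallem}. Fixing a generator $x_0$ of $I^{\eis}$ and using that $\TT_{\fm}/I^{\eis} \simeq \ZZ_p$ (the weight-$2$ Eisenstein series of level $\Gamma_0(\ell)$ has $\ZZ_p$-coefficients), one gets $\fm = (p, x_0)$, whence $\dim(\tan(\TT_{\fm}/(p))) \leq 1$. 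Combining the two bounds gives $\dim(\tan(\TT_{\fm}/(p))) = 1$, and then Theorem~\ref{thm1} applies verbatim. One should also note in passing that the standing hypotheses of the setup are compatible with $k = 2$: since $p > 3$ we have $p - 1 \nmid 2$, and $\zeta(-1) = -\tfrac{1}{12} \in \ZZ_{(p)}^{\times}$ because $p \nmid 12$, so the reduction to Theorem~\ref{thm1} is legitimate and nothing further needs to be checked on that front.

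The hard part here is not located in this corollary at all: all of the genuine content sits in Theorem~\ref{thm1} (the analysis of reducible first-order deformations together with the index computation built on \cite[Theorem $5.1.2$]{W} and \cite[Proposition II.$9.6$]{M2}) and in Mazur's weight-$2$ principality of the Eisenstein ideal, which the present methods do not reprove --- consistent with the remark in the introduction that the $k=2$ case still requires \cite[Proposition II.$9.6$]{M2}. Granting those inputs, the corollary is purely formal.
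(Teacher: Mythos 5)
Your proposal is correct and follows essentially the same route as the paper: reduce to Theorem~\ref{thm1} by checking $\dim(\tan(\TT_{\fm}/(p)))=1$ in weight $2$, which the paper does by citing \cite[Proposition II.$16.6$]{M2} directly, while you derive the same statement from Mazur's principality of $I^{\eis}$ together with $\TT_{\fm}/I^{\eis}\simeq\ZZ_p$ (the same input already invoked in the $k=2$ case of Lemma~\ref{principallem}). The difference is only in which of Mazur's weight-$2$ results is quoted, so the arguments are essentially identical.
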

\begin{proof}
If $k=2$, then \cite[Proposition II.$16.6$]{M2} implies that $\dim(\tan(\TT_{\fm}/(p))) = 1$. So the corollary follows from Theorem~\ref{thm1}.
\end{proof}

\begin{cor}
\label{cor2}
If $k>2$ and $c_0 \cup b_0 \neq 0$, then $\rank_{\ZZ_p}(\TT^0_{\fm}) = 1$ if and only if $c_0 \cup a_0 \neq 0$.
\end{cor}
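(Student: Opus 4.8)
The plan is to show that the hypothesis $c_0 \cup b_0 \neq 0$ forces $\dim(\tan(\TT_{\fm}/(p))) = 1$, and then to invoke Theorem~\ref{thm1} directly.

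First I would pin down the dimension of the tangent space of the pseudo-deformation ring. By Lemma~\ref{tanlem} we have $1 \leq \dim(\tan(R^{\ps,\ord}_{\rhob_0,k}(\ell)/(p))) \leq 2$, and by Lemma~\ref{cuplem} the value $2$ can only occur when $c_0 \cup b_0 = 0$. Since we are assuming $c_0 \cup b_0 \neq 0$, this leaves $\dim(\tan(R^{\ps,\ord}_{\rhob_0,k}(\ell)/(p))) = 1$.

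Next I would transport this bound to the Hecke algebra. By Lemma~\ref{pseudoordlem} the map $\phi_{\TT} : R^{\ps,\ord}_{\rhob_0,k}(\ell) \to \TT_{\fm}$ is surjective; a surjection of objects of $\mathcal{C}$ induces a surjection on cotangent spaces modulo $p$ and hence an injection $\tan(\TT_{\fm}/(p)) \hookrightarrow \tan(R^{\ps,\ord}_{\rhob_0,k}(\ell)/(p))$, so $\dim(\tan(\TT_{\fm}/(p))) \leq 1$. Combining with the lower bound $\dim(\tan(\TT_{\fm}/(p))) \geq 1$ recorded at the start of \S\ref{mainsec} (coming from \cite{M2} and \cite{W}), we obtain $\dim(\tan(\TT_{\fm}/(p))) = 1$. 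Now Theorem~\ref{thm1} applies verbatim and yields that $\rank_{\ZZ_p}(\TT^0_{\fm}) = 1$ if and only if $c_0 \cup a_0 \neq 0$.

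There is no real obstacle here: the corollary is essentially a bookkeeping consequence of Lemma~\ref{tanlem}, Lemma~\ref{cuplem}, Lemma~\ref{pseudoordlem}, and Theorem~\ref{thm1}. The only point requiring a moment's care is the direction of the inequality of tangent-space dimensions under the surjection $\phi_{\TT}$, together with correctly citing the a priori lower bound $\dim(\tan(\TT_{\fm}/(p))) \geq 1$.
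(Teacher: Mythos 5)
Your proposal is correct and matches the paper's argument: the paper also deduces $\dim(\tan(\TT_{\fm}/(p)))=1$ from Lemma~\ref{tanlem}, Lemma~\ref{cuplem} and the surjectivity in Lemma~\ref{pseudoordlem}, and then applies Theorem~\ref{thm1}. You have simply spelled out the cotangent-space comparison under $\phi_{\TT}$ in slightly more detail.
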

\begin{proof}
If $k >2$ and $c_0 \cup b_0 \neq 0$, then Lemma~\ref{tanlem}, Lemma~\ref{cuplem} and Lemma~\ref{pseudoordlem} together imply that $\dim(\tan(\TT_{\fm}/(p))) = 1$. Hence, the corollary follows from Theorem~\ref{thm1}.
\end{proof}

Let $\xi'_{\text{MT}} \in \FF_p$ be the derivative of the Mazur--Tate $\zeta$ function defined by Wake in \cite[Section $1.2.2$]{W}.
\begin{cor}
\label{derivative}
If $c_0 \cup b_0 \neq 0$, then $\xi'_{\text{MT}} \neq 0$ if and only if $c_0 \cup a_0 \neq 0$.
\end{cor}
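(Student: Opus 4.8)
The plan is to reduce the statement to Corollary~\ref{cor2} by invoking Wake's criterion \cite[Theorem $1.2.4$]{W}. That criterion asserts that, when $k > 2$ (the setting in which $\xi'_{\text{MT}}$ is defined in \cite[Section $1.2.2$]{W}), one has $\rank_{\ZZ_p}(\TT^0_{\fm}) = 1$ if and only if the Eisenstein ideal of $\TT^0_{\fm}$ is principal \emph{and} $\xi'_{\text{MT}} \neq 0$. Thus the first thing I would do is show that the hypothesis $c_0 \cup b_0 \neq 0$ already forces $I^{\eis,0}$ to be principal; once this is established, Wake's criterion collapses to the equivalence ``$\rank_{\ZZ_p}(\TT^0_{\fm}) = 1 \Longleftrightarrow \xi'_{\text{MT}} \neq 0$'', while Corollary~\ref{cor2} supplies the equivalence ``$\rank_{\ZZ_p}(\TT^0_{\fm}) = 1 \Longleftrightarrow c_0 \cup a_0 \neq 0$'', and chaining the two gives $\xi'_{\text{MT}} \neq 0 \Longleftrightarrow c_0 \cup a_0 \neq 0$, which is the assertion.

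For the principality step I would repeat the argument used in the proof of Corollary~\ref{cor2}: under the hypothesis $c_0 \cup b_0 \neq 0$, Lemma~\ref{tanlem}, Lemma~\ref{cuplem} and Lemma~\ref{pseudoordlem} together yield $\dim(\tan(\TT_{\fm}/(p))) = 1$, and then Lemma~\ref{principallem} shows that $I^{\eis}$ is principal, say $I^{\eis} = (x_0)$. Since the surjection $F : \TT_{\fm} \to \TT^0_{\fm}$ carries the generating set $\{w_{\ell}+1,\ T_q - (1+q^{k-1}) : q \neq \ell\}$ of $I^{\eis}$ onto the corresponding generating set of $I^{\eis,0}$, we conclude $I^{\eis,0} = (F(x_0))$, which is principal.

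The only real obstacle is bookkeeping rather than mathematics: one must verify that the present setup --- the three congruence/class-group hypotheses --- implies the hypotheses under which \cite[Theorem $1.2.4$]{W} is proved (this is precisely the observation, already recorded in the ``Setup'' paragraph, that the hypotheses of \cite{W} hold in our setting), and one must match the ``Eisenstein ideal of $\TT^0_{\fm}$'' appearing in Wake's statement with the ideal $I^{\eis,0}$ defined in \S\ref{heckesec}. With those identifications in place, the corollary follows immediately by combining Corollary~\ref{cor2} with \cite[Theorem $1.2.4$]{W}; no new Galois-cohomological or deformation-theoretic input is needed.
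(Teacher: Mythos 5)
Your proposal is correct and follows essentially the same route as the paper: deduce principality of the Eisenstein ideal from $c_0 \cup b_0 \neq 0$ (via Lemma~\ref{tanlem}, Lemma~\ref{cuplem}, Lemma~\ref{pseudoordlem} and Lemma~\ref{principallem}), then combine Wake's criterion \cite[Theorem $1.2.4$]{W} with Corollary~\ref{cor2}. You merely spell out details the paper leaves implicit (passing principality through $F$ to $I^{\eis,0}$ and chaining the two equivalences), so there is nothing to change.
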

\begin{proof}
If $c_o \cup b_0 \neq 0$, then we know that $I^{\eis}$ is principal. Hence, the corollary follows from Corollary~\ref{cor2} and \cite[Theorem $1.2.4$]{W}.
\end{proof}
Note that the statement of Corollary~\ref{derivative} is purely elementary but its proof is not elementary.
We are not aware of any direct proof of this corollary.

We now move on to the case of $c_0 \cup b_0 =0$.
\begin{thm}
\label{thm2}
If $c_0 \cup b_0 = 0$ and $p \mid k$, then $I^{\eis,0}$ is not principal and $\rank_{\ZZ_p}(\TT^0_{\fm}) > 1$.
\end{thm}
\begin{proof}
If $\rank_{\ZZ_p}(\TT^0_{\fm}) = 1$, then every ideal of $\TT^0_{\fm}$ is principal and hence, $I^{\eis,0}$ is principal.
So it suffices to prove that $I^{\eis,0}$ is not principal to prove the theorem.
Suppose $c_0 \cup b_0=0$, $p \mid k$ and $I^{\eis,0}$ is principal.
By Lemma~\ref{principal}, we get that $I^{\eis}$ is principal.
Since $\TT_{\fm}/I^{\eis} \simeq \ZZ_p$, it follows that $\TT_{\fm}$ is a quotient of $\ZZ_p \llbracket X \rrbracket$ and hence, $\dim(\tan(\TT_{\fm}/(p)))=1$.

Let $A=\begin{pmatrix} \TT_{\fm} & B \\ C & \TT_{\fm}\end{pmatrix}$ be the faithful GMA over $\TT_{\fm}$ and $\rho : G_{\QQ,p\ell} \to A^{\times}$ be the representation attached to $(\tau_{\ell},\delta_{\ell})$ by Lemma~\ref{gmalem}.
By Lemma~\ref{principallem}, we know that the first order deformation of $(\tr(\rhob_0),\det(\rhob_0))$ arising from $\TT_{\fm}$ is reducible.
So Lemma~\ref{tangenlem} implies that $\fm$ is generated by $p$ and $x$, where $\rho(i_{\ell}) =  \begin{pmatrix} 1+x & b_{\ell} \\ c_{\ell} & 1-x\end{pmatrix}$.
As $\det(\rho(i_{\ell}))=1$, we get that $b_{\ell}c_{\ell} = -x^2$.

Let $g_{\ell}$ be a lift of $\frob_{\ell}$ in $G_{\QQ_{\ell}}$ and suppose $\rho(g_{\ell}) = \begin{pmatrix} a & b \\ c & d\end{pmatrix}$.
Now $\rho(g_{\ell}i_{\ell}g_{\ell}^{-1})=\rho(i_{\ell})^{\ell}$.
As $b_{\ell}c_{\ell} = -x^2$, it follows that $\rho(i_{\ell})^{\ell} = \begin{pmatrix} 1+\ell x & \ell b_{\ell} \\ \ell c_{\ell} & 1-\ell x\end{pmatrix} $.
So we have $$\begin{pmatrix} a & b \\ c & d\end{pmatrix}\begin{pmatrix} 1+x & b_{\ell} \\ c_{\ell} & 1-x\end{pmatrix} = \begin{pmatrix} 1+\ell x & \ell b_{\ell} \\ \ell c_{\ell} & 1-\ell x\end{pmatrix}  \begin{pmatrix} a & b \\ c & d\end{pmatrix}.$$
Thus $b_{\ell}c+d(1-x) =\ell bc_{\ell}+ d(1-\ell x)$.
Now part~\eqref{bhaag4} of Lemma~\ref{gmalem} implies that $B=\TT_{\fm}b_{\ell}$ and so $b=rb_{\ell}$ for some $r \in \TT_{\fm}$.
Since $b_{\ell}c_{\ell}=-x^2$, there exists an $r' \in \TT_{\fm}$ such that \begin{equation}\label{eq:1} b_{\ell}c = dx(1-\ell)+x^2r'.\end{equation}

Let $I=BC$.
So, by Lemma~\ref{redlem}, we know that $\tr(\rho)\pmod{I}$ is reducible. 
Moreover, if $\rho(g) = \begin{pmatrix} a_g & b_g \\ c_g & d_g\end{pmatrix}$ for $g \in G_{\QQ,p\ell}$, then the map $\chi_1 : G_{\QQ,p\ell} \to (\TT_{\fm}/I)^{\times}$ sending $g$ to $a_g \pmod{I}$ is a character of $G_{\QQ,p\ell}$ lifting the trivial character. 
Recall, from Part~\eqref{bhaag4} of Lemma~\ref{gmalem}, that the representation $\rho$ is tamely ramified at $\ell$.
So, the character $\chi_1$ is also tamely ramified at $\ell$.
Therefore, by the Kronecker-Weber theorem, we get that the order of $\chi_1(I_{\ell})$ divides $\ell-1$.
Hence, $\chi_1(i_{\ell})^{\ell-1}=1$ which means $(1+x)^{\ell-1} \pmod{I}=1$ i.e. $(1+x)^{\ell-1}-1 \in I$.

We know that $x^2 \in I$.
Since $(1+x)^{\ell-1}-1 \in I$, we get that $p^{\nu}x \in I$. On the other hand, Lemma~\ref{redlem} and \cite[Theorem $5.1.1$]{W} imply that $\TT_{\fm}/I \simeq \dfrac{\ZZ_p \llbracket X \rrbracket}{(X^2,p^{\nu}X)}$. Since we have already seen that $\fm = (p,x)$, there is a surjective morphism $\ZZ_p \llbracket X\rrbracket \to \TT_{\fm}$ sending $X$ to $x$. Hence, combining all this, we get that $I=(x^2,p^{\nu}x)$.

Now as we are assuming $c_0 \cup b_0 = 0$, the proof of Lemma~\ref{vanishlem} implies that $b_0|_{G_{\QQ_{\ell}}} = 0$.

Let $C'=C/\TT_{\fm}c_{\ell}$. Then, following the proof of part~\eqref{bhaag4} of Lemma~\ref{gmalem}, we get an injective map $\psi : \Hom(C'/\fm C', \FF_p) \to H^1(G_{\QQ,p},\omega_p^{k-1})$.
So if its image is non-zero, then it is generated by $b_0$. From the construction of $\psi$ along with the fact $b_0|_{G_{\QQ_{\ell}}} =0$, we see that the image of $c$ in $\fm C'$ is $0$. So $c \in \fm C + \TT_{\fm} c_{\ell}$. Therefore, $b_{\ell}c \in (x^2,p^{\nu+1}x)$ as $\fm =(p,x)$ and $BC=(x^2,p^{\nu}x)$.

Now $d \in \TT_{\fm}^{\times}$.
Hence, from \eqref{eq:1}, we get $x(p^{\nu}+p^{\nu+1}z + xr'')=0$ for some $z \in \ZZ_p$ and $r'' \in \TT_{\fm}$.
From Lemma~\ref{annih}, we know that the annihilator of $I^{\eis}$ is $\ker(F)$.
As $(x)=I^{\eis}$, it follows that $p^{\nu}+p^{\nu+1}z + xr'' \in \ker(F)$. 

So, $|\TT^0_{\fm}/(F(x))| \leq p^{\nu}$.
Since $(x)=I^{\eis}$, it follows that $(F(x)) = I^{\eis,0}$. 
We know, from \cite[Theorem $5.1.2$]{W}, that $\TT^0_{\fm}/F(x) = \TT^0_{\fm}/I^{\eis,0} \simeq \ZZ/p^{\nu+v_p(k)}\ZZ$.
Since $p \mid k$, $v_p(k) >0$ and hence, this gives us a contradiction.
Therefore, we get $\rank_{\ZZ_p}(\TT^0_{\fm}) > 1$ which proves the theorem.
\end{proof}

We will now prove Theorem~\ref{thm2} without the assumption that $p \mid k$. 
We will crucially use the theory of modular forms modulo $p$ (recalled in \S\ref{heckesec}) along with Theorem~\ref{thm2} in its proof.
Note that if $i \geq 2$, then the action of $T_p$ on $M_i(\ell,\FF_p)$ coincides with the action of the operator $U$ considered in \cite[Section $1$]{J} (note that the prime $\ell$ of \cite{J} corresponds to the prime $p$ in our context).

\begin{thm}
\label{thm3}
If $c_0 \cup b_0 = 0$ and $k > 2$, then $I^{\eis,0}$ is not principal and $\rank_{\ZZ_p}(\TT^0_{\fm}) > 1$.
\end{thm}
\begin{proof}
We have already proved the theorem for $p \mid k$. So assume $p \nmid k$ and $c_0 \cup b_0=0$.
Recall that it suffices to prove that $I^{\eis,0}$ is not principal.

Now let $k'$ be an integer such that $k' > k$, $p-1 \mid (k'-k)$ and $p \mid k'$. 
Let $\TT'$ be the $\ZZ_p$-subalgebra of $\text{End}_{\ZZ_p}(M_{k'}(\ell,\ZZ_p))$ generated by the Hecke operators $T_q$ for primes $q \neq \ell$ and the Atkin--Lehner operator $w_{\ell}$ at $\ell$.
Let $\TT'_{\fm}$ be the completion of $\TT'$ at its maximal ideal generated by the set $\{p, w_{\ell}+1, T_q - (1+q^{k'-1}) \mid q \neq \ell \text{ is a prime}\}$.

Let $\mathcal{E}_{p-1}$ be the Eisenstein series of level $1$ and weight $p-1$ such that the constant term of the $q$-expansion $\mathcal{E}_{p-1}(q)$ of $\mathcal{E}_{p-1}$ is $1$.
Note that $\mathcal{E}_{p-1}(q) \in \ZZ_p \llbracket q \rrbracket$.
Let $\overline{\mathcal{E}_{p-1}(q)} \in M_{p-1}(\ell,\FF_p)$ be the reduction of $\mathcal{E}_{p-1}(q)$ modulo $p$.
Then we know that $\overline{\mathcal{E}_{p-1}(q)} = 1$.
Therefore, by using the multiplication by $(\overline{\mathcal{E}_{p-1}(q)})^{\frac{k'-k}{p-1}}$ map, we can identify $M_k(\ell,\FF_p)$ as a subspace of $M_{k'}(\ell,\FF_p)$ and we will denote this subspace by $M_k(\ell,\FF_p)$ as well. 

As $k > 2$, $k' > p+1$, by \cite[Lemma $1.9$]{J} we know that there exists an integer $n >0$ such that $T_p^n(M_{k'}(\ell,\FF_p)) \subset M_k(\ell,\FF_p)$.
Hence, after localizing at $\fm$, we get that $T_p^n(M_{k'}(\ell,\FF_p)_{\fm}) \subset M_k(\ell,\FF_p)_{\fm}$. As $T_p -1 -p^{k'-1} \in \fm$, it follows that $T_p \in (\TT'_{\fm})^{\times}$. 
Therefore, $T_p$ is an invertible operator on $M_{k'}(\ell,\FF_p)_{\fm}$. So, it follows that $M_k(\ell,\FF_p)_{\fm} = M_{k'}(\ell,\FF_p)_{\fm}$.

As $p-1 \mid (k'-k)$ and $p-1 \nmid k$, it follows that $p-1 \nmid k'$.
Hence, by Lemma~\ref{quotient}, the largest quotient of $\TT'$ (resp. of $\TT$) acting faithfully on $M_{k'}(\ell,\FF_p)_{\fm}$ (resp. on $M_{k}(\ell,\FF_p)_{\fm}$) is $\TT'_{\fm}/(p)$ (resp. $\TT_{\fm}/(p)$).
Since $M_k(\ell,\FF_p)_{\fm} = M_{k'}(\ell,\FF_p)_{\fm}$, $\TT'_{\fm}/(p) \simeq \TT_{\fm}/(p)$.

Now suppose $I^{\eis}$ is principal.
Then $\TT_{\fm}$ is a quotient of $\ZZ_p \llbracket X \rrbracket$ which means $\dim(\tan(\TT_{\fm}/(p))) = 1$.
By combining Theorem~\ref{thm2} and Lemma~\ref{principal}, we get that the Eisenstein ideal of $\TT'_{\fm}$ is not principal.
Therefore, Lemma~\ref{principallem} implies that $\dim(\tan(\TT'_{\fm}/(p))) > 1$ and hence, $\dim(\tan(\TT_{\fm}/(p))) > 1$.
This gives us a contradiction.
Thus we conclude that $I^{\eis}$ is not principal.
So by Lemma~\ref{principal}, we get that $I^{\eis,0}$ is not principal. This finishes the proof of the theorem.
\end{proof}

We will now prove Corollaries~\ref{main}, \ref{corb} and \ref{corc}.
We begin with the proof of Corollary~\ref{main}.
\begin{proof}[Proof of Corollary~\ref{main}]
By combining Theorem~\ref{thm2} and Theorem~\ref{thm3}, we get that if $I^{\eis,0}$ is principal then $c_0 \cup b_0 \neq 0$.
Now suppose $c_0 \cup b_0 \neq 0$.
Then, by combining Lemma~\ref{tanlem}, Lemma~\ref{cuplem} and Lemma~\ref{pseudoordlem}, we get that $\dim(\tan(\TT_{\fm}/(p)))=1$.
So, Lemma~\ref{principallem} implies that $I^{\eis}$ is principal.
Hence, by Lemma~\ref{principal}, we get that $I^{\eis,0}$ is principal.
If Vandiver's conjecture holds for $p$, then we know, from Lemma~\ref{vanishlem}, that $c_0 \cup b_0 \neq 0$ if and only if $\prod_{i=1}^{p-1}(1-\zeta_p^i)^{i^{k-2}} \in (\ZZ/\ell\ZZ)^{\times}$ is not a $p$-th power.
This proves the corollary.
\end{proof}

Before proving Corollary~\ref{corb} and Corollary~\ref{corc}, we first prove a result that relates vanishing of cup product with class groups.

Recall that we denoted $ \QQ(\zeta_{\ell}^{(p)},\zeta_p)$ by $K$ and denoted its class group of by $\text{Cl}(K)$. 
Let $L$ be the unramified abelian extension of $K$ such that $\text{Gal}(L/K) = \text{Cl}(K)/\text{Cl}(K)^p$. Note that $L$ is also Galois over $\QQ$ and $\text{Gal}(L/K)$ is a normal subgroup of $\text{Gal}(L/\QQ)$.
As $\text{Gal}(L/K) = \text{Cl}(K)/\text{Cl}(K)^p$ is abelian, we get an action of $\text{Gal}(K/\QQ)$ on it. Now $\text{Gal}(K/\QQ) = \text{Gal}(\QQ(\zeta_{\ell}^{(p)})/\QQ) \times \text{Gal}(\QQ(\zeta_p)/\QQ)$. 
So we have an action of $\text{Gal}(\QQ(\zeta_p)/\QQ)$ on $\text{Cl}(K)/\text{Cl}(K)^p$.
Denote by $(\text{Cl}(K)/\text{Cl}(K)^p)[\omega_p^{1-k}]$ the subspace of $\text{Cl}(K)/\text{Cl}(K)^p$ on which $\text{Gal}(\QQ(\zeta_p)/\QQ)$ acts by $\omega_p^{1-k}$.

\begin{prop}
\label{classgroupprop}
Suppose $k$ is an even integer, $p-1 \nmid k$, and the $\omega_p^{1-k}$-component of the $p$-part of the class group of $\QQ(\zeta_p)$ is trivial.
The following are equivalent:
\begin{enumerate}
\item\label{part1} $c_0 \cup a_0 =0$,
\item\label{part2} $\dim((\text{Cl}(K)/\text{Cl}(K)^p)[\omega_p^{1-k}]) \geq 2$,
\item\label{part3} The image of $\prod_{i=1}^{\ell -1}i^{(\sum_{j=1}^{i-1}j^{k-1})}$ in $(\ZZ/\ell \ZZ)^{\times}$ is a $p$-th power.
\end{enumerate}
\end{prop}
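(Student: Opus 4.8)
The plan is to establish the two equivalences $(1)\Leftrightarrow(3)$ and $(1)\Leftrightarrow(2)$, which together give the Proposition. The first is a local computation at $\ell$; the second is pure Galois cohomology together with class field theory over $K$.

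\textbf{Reducing $(1)$ to a local statement.} The first move is to detect $c_0\cup a_0\in H^2(G_{\QQ,p\ell},\omega_p^{1-k})$ by its restriction to $G_{\QQ_\ell}$. The proof of Lemma~\ref{cohomlem} shows that the restriction map $H^1(G_{\QQ,p\ell},\omega_p^k)\to H^1(G_{\QQ_p},\omega_p^k)\times H^1(G_{\QQ_\ell},\omega_p^k)$ is injective, i.e. $\Sh^1(G_{\QQ,p\ell},\omega_p^k)=0$; since $\omega_p^k$ is the Cartier dual of $\omega_p^{1-k}$, Poitou--Tate duality gives $\Sh^2(G_{\QQ,p\ell},\omega_p^{1-k})=0$. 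Because $p-1\nmid k$ we have $\omega_p^k|_{G_{\QQ_p}}\neq 1$, so by local duality $H^2(G_{\QQ_p},\omega_p^{1-k})=0$, while $H^2(G_{\mathbb{R}},\omega_p^{1-k})=0$ as $p$ is odd. Hence $H^2(G_{\QQ,p\ell},\omega_p^{1-k})\hookrightarrow H^2(G_{\QQ_\ell},\omega_p^{1-k})$, so $(1)$ is equivalent to $c_0|_{G_{\QQ_\ell}}\cup a_0|_{G_{\QQ_\ell}}=0$. Since $p\mid\ell-1$ we have $\omega_p|_{G_{\QQ_\ell}}=1$, so $\zeta_p\in\QQ_\ell$, $H^2(G_{\QQ_\ell},\omega_p^{1-k})=H^2(G_{\QQ_\ell},\FF_p)\cong\FF_p$, and the cup product pairing on $H^1(G_{\QQ_\ell},\FF_p)\cong\QQ_\ell^\times/(\QQ_\ell^\times)^p$ is the $p$-th power Hilbert symbol.

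\textbf{The local computation: $(1)\Leftrightarrow(3)$.} Next I would identify the two restricted classes under Kummer theory. The class $a_0$ cuts out $\QQ(\zeta_\ell^{(p)})$, the degree-$p$ subfield of $\QQ(\zeta_\ell)/\QQ$, which is totally tamely ramified at $\ell$; hence $a_0|_{G_{\QQ_\ell}}$ is, modulo $p$-th powers and modulo units that pair trivially under the tame symbol, the Kummer class of $\ell$. The class $c_0|_{G_{\QQ_\ell}}$ is also ramified at $\ell$ (cf. Lemma~\ref{vanishlem}), and its Kummer generator is extracted from an explicit cocycle representative of $c_0$: using the description of the $\omega_p^{1-k}$-eigenspace in terms of cyclotomic $\ell$-units (where the triviality of the $\omega_p^{1-k}$-part of the $p$-class group of $\QQ(\zeta_p)$ is precisely what forces $c_0$ to have such a description), together with the computations of \cite{W} and \cite{SS} and the fact that $\sum_{j=1}^{i-1}j^{k-1}$ is essentially a Bernoulli-polynomial value, one finds that $c_0|_{G_{\QQ_\ell}}$ corresponds to $\prod_{i=1}^{\ell-1}i^{(\sum_{j=1}^{i-1}j^{k-1})}$ up to $p$-th powers, trivially pairing units, and a nonzero scalar. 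Evaluating the Hilbert symbol (which for two ramified classes reduces to a power-residue symbol of the unit parts mod $\ell$) then yields that $c_0|_{G_{\QQ_\ell}}\cup a_0|_{G_{\QQ_\ell}}=0$ if and only if $\prod_{i=1}^{\ell-1}i^{(\sum_{j=1}^{i-1}j^{k-1})}$ is a $p$-th power in $(\ZZ/\ell\ZZ)^\times$, which is $(3)$.

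\textbf{The class group statement: $(1)\Leftrightarrow(2)$.} Write $V:=(\text{Cl}(K)/\text{Cl}(K)^p)[\omega_p^{1-k}]$. I would first observe that $\dim V\geq 1$ always: the homomorphism $c_0|_{G_K}\colon G_K\to\FF_p$ is unramified everywhere --- above $p$ by construction, and above $\ell$ because $K/\QQ$ is totally tamely ramified of degree $p$ there while $c_0$ is tamely ramified of the same degree at $\ell$ --- and it is nonzero (the $\ZZ/p$-extension of $\QQ(\zeta_p)$ that $c_0$ cuts out carries the $\text{Gal}(\QQ(\zeta_p)/\QQ)$-action $\omega_p^{1-k}\neq 1$, so it is not contained in $K$), hence it defines a nonzero class $[c_0|_{G_K}]\in V$. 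By Lemma~\ref{inflem}, $(1)$ is equivalent to the existence of a representation $\rho'\colon G_{\QQ,p\ell}\to\GL_3(\FF_p)$ of the displayed unipotent shape with $F(I_p)=0$, equivalently --- via the connecting-homomorphism description of $\cup\,a_0$ --- to the liftability of $c_0$ to $H^1(G_{\QQ,p\ell},W)$ for the unipotent module $W$ determined by twisting $a_0$ by $\omega_p^{1-k}$. Restricting along $G_K\triangleleft G_{\QQ,p\ell}$ (over which $W$ becomes trivial and $a_0$ vanishes) and using class field theory over $K$, such lifts correspond to classes in $V$ independent of $[c_0|_{G_K}]$, the constraint $F(I_p)=0$ being arranged exactly as in the proof of Lemma~\ref{inflem} using the hypothesis on the class group of $\QQ(\zeta_p)$. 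Hence $(1)\Leftrightarrow\dim V\geq 2$, which is $(2)$.

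\textbf{Main obstacle.} The crux is the second half of the $(1)\Leftrightarrow(3)$ step: producing a cocycle representative of $c_0$ explicit enough to read off its Kummer generator at $\ell$ and recover the exponent $\sum_{j=1}^{i-1}j^{k-1}$. This is where the cyclotomic-unit/Bernoulli-distribution input is essential, and one must be careful with the eigenspace twist and with the units that drop out of the tame symbol (which is why the criterion sees only the class modulo $p$-th powers in $(\ZZ/\ell\ZZ)^\times$). A secondary point requiring care is the bookkeeping in $(1)\Leftrightarrow(2)$: one must correctly isolate the always-present class $[c_0|_{G_K}]$ so that the correspondence with solutions of the embedding problem genuinely produces ``$\dim V\geq 2$'' rather than ``$\dim V\geq 1$''.
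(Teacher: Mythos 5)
Your reduction of \eqref{part1} to the vanishing of the local cup product at $\ell$ is correct (it is \cite[Proposition $2.4.1$]{SS}, which the paper also uses), the identification of $a_0$ with the class cutting out $\QQ(\zeta_{\ell}^{(p)})$ and the tame-symbol reduction are fine, and your argument that \eqref{part1} implies \eqref{part2} is essentially the paper's (and \cite{WWE1}'s). But the two remaining bridges are asserted rather than proved. In your $(1)\Leftrightarrow(3)$ step, everything hinges on the claim that the unit part of the Kummer class of $c_0|_{G_{\QQ_{\ell}}}$ is, up to $p$-th powers and a nonzero exponent, $\prod_{i=1}^{\ell-1}i^{(\sum_{j=1}^{i-1}j^{k-1})}$. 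This is not a routine unwinding: the class-group hypothesis does let one realize $c_0$ over $\QQ(\zeta_p)$ by an $\ell$-unit in the appropriate eigenspace, but computing the image of that $\ell$-unit in $(\ZZ/\ell\ZZ)^{\times}$ modulo $p$-th powers is precisely the computational core of Lecouturier's theorem \cite[Theorem $1.9$]{L} (Gauss-sum/Stickelberger-type input), and nothing in \cite{W} or \cite{SS} hands it to you; the discussion in \cite[Section $5.3$]{SS} used in Lemma~\ref{vanishlem} concerns $b_0$, a class for the modulus $p$ realized by an explicit cyclotomic unit, which is a much easier situation. The paper avoids this entirely by quoting \cite{L} for $(2)\Leftrightarrow(3)$ and never identifying the Kummer class of $c_0|_{G_{\QQ_{\ell}}}$. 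As written, your sketch imports that theorem without proof.

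The second gap is the direction $(2)\Rightarrow(1)$. You claim that, by class field theory over $K$, lifts $F$ (equivalently solutions of the embedding problem of Lemma~\ref{inflem}) ``correspond'' to classes of $V=(\text{Cl}(K)/\text{Cl}(K)^p)[\omega_p^{1-k}]$ independent of $[c_0|_{G_K}]$. Only one direction of this correspondence is easy: a lift produces an extra unramified eigenclass, which is $(1)\Rightarrow(2)$. To reverse it you must show that an extra eigenclass can be assembled into a Galois extension of $\QQ$ of Heisenberg type whose off-diagonal cocycles are exactly $c_0$ and $a_0$ and whose corner entry is unramified at $p$; this is a genuine embedding problem in which the action of $\gal(\QQ(\zeta_{\ell}^{(p)})/\QQ)$ on $V$ must intervene, and class field theory over $K$ alone does not provide it. The paper's proof of this implication is exactly where the work lies: it filters $V$ by powers of $(\alpha-1)$, shows $\dim(V/(\alpha-1)V)=1$ so that $M'=K\cdot K_{\rhob_{c_0}}$, proves $\gal(M''/\QQ(\zeta_p))$ is non-abelian while the decomposition group at $\ell$ is abelian, deduces that $c_0|_{G_{\QQ_{\ell}}}$ and $a_0|_{G_{\QQ_{\ell}}}$ must be proportional, and only then concludes $c_0\cup a_0=0$ via the same local--global injection you set up --- it never constructs the lift $F$ from the class group. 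You need either such a structural argument (or an equivalent obstruction-theoretic one) for $(2)\Rightarrow(1)$, or the citation of \cite{L} for $(2)\Leftrightarrow(3)$ as in the paper; as it stands, both $(1)\Leftrightarrow(3)$ and $(2)\Rightarrow(1)$ are unproven.
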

\begin{proof}
As we are assuming that the $\omega_p^{1-k}$-component of the $p$-part of the class group of $\QQ(\zeta_p)$ is trivial, the equivalence between parts \eqref{part2} and \eqref{part3} follows from \cite[Theorem $1.9$]{L}.

To prove that part~\eqref{part1} implies part~\eqref{part2}, we follow the proof of \cite[Proposition $11.1.1$]{WWE1}.
Suppose $c_0 \cup a_0=0$.
Therefore, there exists a representation $\rho : G_{\QQ,p\ell} \to \GL_3(\FF_p)$ such that $$\rho(g) = \begin{pmatrix} \omega_p^{1-k}(g) & c_0(g) & F(g) \\ 0 & 1 & a_0(g) \\ 0 & 0 & 1\end{pmatrix} \text{ for all } g \in G_{\QQ,p\ell}.$$ Here $F : G_{\QQ,p\ell} \to \FF_p$ is a cochain such that the coboundary of $-F$ is $c_0 \cup a_0$.
From the proof of Lemma~\ref{inflem}, it follows that we can change $F$ by a suitable element of $H^1(G_{\QQ,p},\omega_p^{1-k})$ to assume $F(G_{\QQ_p})=0$.

Let $M$ be the extension of $\QQ$ fixed by $\ker(\rho)$. So $\text{Gal}(M/K) \simeq \ZZ/p\ZZ \times \ZZ/p\ZZ$ and its image under $\rho$ is $ \{\begin{pmatrix} 1 & a & b \\ 0 & 1 & 0 \\ 0 & 0 & 1\end{pmatrix} \mid a, b \in \FF_p\}$.
As $c_0$ is unramified at $p$ and $F(I_p)=0$, it follows that $M$ is unramified over all primes of $K$ lying above $p$.

As $\omega_p^{1-k}(I_{\ell}) =1$, it follows that $\rho(I_{\ell})$ is a $p$-group and hence, $\rho$ is tamely ramified at $\ell$. This means that $|\rho(I_{\ell})|=p$. So the image of $I_{\ell}$ in $\text{Gal}(M/\QQ)$ has cardinality $p$ and the image of $I_{\ell}$ in $\text{Gal}(K/\QQ)$ also has cardinality $p$. Hence, $M$ is unramified over all primes of $K$ lying above $\ell$.
Therefore, we conclude that $M$ is an unramified extension of $K$.

From the description of $\rho$ and the description of $\rho(\text{Gal}(M/K))$, it follows that $\text{Gal}(\QQ(\zeta_p)/\QQ)$ acts via $\omega_p^{1-k}$ on $\text{Gal}(M/K)$. 
As $\text{Gal}(M/K) \simeq \ZZ/p\ZZ \times \ZZ/p\ZZ$, it follows that $$\dim((\text{Cl}(K)/\text{Cl}(K)^p)[\omega_p^{1-k}]) \geq 2$$ which shows that part~\eqref{part1} implies part~\eqref{part2}.

We will now prove that part~\eqref{part2} implies part~\eqref{part1}.
Suppose $\dim((\text{Cl}(K)/\text{Cl}(K)^p)[\omega_p^{1-k}]) \geq 2$.
Let $M$ be the subfield of $L$ such that $\text{Gal}(M/K) = (\text{Cl}(K)/\text{Cl}(K)^p)[\omega_p^{1-k}]$. Note that $M$ is also Galois over $\QQ$.

So $V := \text{Gal}(M/K)$ is an $\FF_p$ vector space on which the cyclic $p$-group $\text{Gal}(\QQ(\zeta_{\ell}^{(p)})/\QQ)$ acts.
Let $\alpha$ be a generator of $\text{Gal}(\QQ(\zeta_{\ell}^{(p)})/\QQ)$.
Let $M'$ be the subfield of $M$ such that $\gal(M'/K) \simeq V/(\alpha-1)V$.
As $(\alpha-1)V$ is a subspace of $V$ stable under the action of $\gal(K/\QQ)$, it follows that $(\alpha-1)V$ is a normal subgroup of $\gal(M/\QQ)$ and hence, $M'$ is also Galois over $\QQ$.
As $\gal(\QQ(\zeta_{\ell}^{(p)})/\QQ)$ acts trivially on $\gal(M'/K)$, it follows that $\gal(M'/\QQ(\zeta_p))$ is an abelian $p$-group.
Note that $\gal(\QQ(\zeta_p)/\QQ)$ acts on $\gal(M'/K)$ via $\omega_p^{1-k}$ and it acts trivially on $\gal(K/\QQ(\zeta_p))$.
Hence, $\gal(M'/\QQ(\zeta_p)) \simeq \FF_p^{\oplus r}$ for some $r \geq 2$ and as a $\gal(\QQ(\zeta_p)/\QQ)$-representation, $\gal(M'/\QQ(\zeta_p)) \simeq \FF_p(\omega_p^{1-k})^{\oplus r-1} \oplus \FF_p$.

Therefore, from subfields of $M'$, we get $r-1$ elements of $H^1(G_{\QQ,p\ell},\omega_p^{1-k})$ which are linearly independent over $\FF_p$.
Now the prime of $\QQ(\zeta_p)$ lying above $p$ is unramified in $K$. As $M'$ is unramified over $K$, it follows that the prime of $\QQ(\zeta_p)$ lying above $p$ is also unramified in $M'$.
So the $r-1$ elements of $H^1(G_{\QQ,p\ell},\omega_p^{1-k})$ arising from subfields of $M'$ are all unramified at $p$. Hence, Lemma~\ref{cohomlem} implies that $r-1=1$. 
Therefore, $M'$ is a $\ZZ/p\ZZ$ extension of $K$.
Since $c_0$ generates the space of classes of $H^1(G_{\QQ,p\ell},\omega_p^{k-1})$ which are unramified at $p$, we get that $M'=K.K_{\rhob_{c_0}}$, where $K_{\rhob_{c_0}}$ is the extension of $\QQ$ fixed by $\ker(\rhob_{c_0})$.

Now let $M''$ be the subfield of $M$ such that $\gal(M''/K) \simeq V/(\alpha-1)^2V$. By our assumption, we have $\dim(V) \geq 2$ and we have just proved that $\dim(V/(\alpha-1)V) = 1$. So, we have $\dim(V/(\alpha-1)^2V) \geq 2$ and $\gal(M''/\QQ(\zeta_p))$ is not abelian as $\alpha$ does not act trivially on $V/(\alpha-1)^2V$.
Note that $K.K_{\rhob_{c_0}}=M' \subset M''$.
Denote the image of $G_{\QQ_{\ell}}$ in $\gal(M''/\QQ)$ by $D_{\ell}$.
As $\ell$ splits completely in $\QQ(\zeta_p)$, it follows that $D_{\ell}$ lies in $\gal(M''/\QQ(\zeta_p))$. 
Since $M''$ is unramified over $K$ and $K$ is abelian over $\QQ$, it follows that $D_{\ell}$ is abelian.

Suppose $c_0|_{G_{\QQ_{\ell}}} \neq \beta a_0|_{G_{\QQ_{\ell}}}$ for any $\beta \in \FF_p$. Note that $c_0|_{G_{\QQ_{\ell}}} \neq 0$ and $a_0|_{G_{\QQ_{\ell}}} \neq 0$. 
So the image of $D_{\ell}$ in $\gal(M'/\QQ(\zeta_p))$ has cardinality $p^2$ and hence, this image is all of $\gal(M'/\QQ(\zeta_p))$.
Now $\gal(M''/K)$ is an abelian group and it is normal in $\gal(M''/\QQ(\zeta_p))$ with their quotient given by $\gal(K/\QQ(\zeta_p)) \simeq \ZZ/p\ZZ$.
As $$\gal(M''/M') = (\alpha-1)V/(\alpha-1)^2V \subset \gal(M''/K),$$ $\gal(K/\QQ(\zeta_p))$ acts trivially on it. 
Hence, $\gal(M''/M')$ is in the center of $\gal(M''/\QQ(\zeta_p))$.

We have already seen that $D_{\ell}$ gets mapped surjectively on $\gal(M'/\QQ(\zeta_p))$ under the surjective map $\gal(M''/\QQ(\zeta_p)) \to \gal(M'/\QQ(\zeta_p))$.
Therefore, $\gal(M''/\QQ(\zeta_p)) = D_{\ell}.\gal(M''/M')$. As $D_{\ell}$ is abelian and $\gal(M''/M')$ is in the center of $\gal(M''/\QQ(\zeta_p))$, it follows that $\gal(M''/\QQ(\zeta_p))$ is abelian. But this gives a contradiction as we have already seen that $\gal(M''/\QQ(\zeta_p))$ is not abelian.

So we have $c_0|_{G_{\QQ_{\ell}}} = \beta a_0|_{G_{\QQ_{\ell}}}$ for some $\beta \in \FF_p$. This means that $c_0|_{G_{\QQ_{\ell}}} \cup a_0|_{G_{\QQ_{\ell}}} = 0$.
From \cite[Proposition $2.4.1$]{SS}, we get that $c_0 \cup a_0 =0$ which completes the proof of the proposition.
\end{proof}

\begin{proof}[Proof of Corollary~\ref{corb} and Corollary~\ref{corc}]
Corollary~\ref{corb} follows directly by Corollary~\ref{cor1} and Proposition~\ref{classgroupprop}.
Corollary~\ref{corc} follows directly by Theorem~\ref{thm3}, Proposition~\ref{classgroupprop} and Lemma~\ref{vanishlem}.
\end{proof}

We will end this article by proving the $R=\TT$ theorems mentioned in the introduction (Theorem~\ref{thmb}).
\begin{proof}[Proof of Theorem~\ref{thmb}]
{\bf Part~\eqref{item1} :}
Recall that we denoted by $(T,D) : G_{\QQ,p\ell} \to  R^{\ps,\ord}_{\rhob_0,k}(\ell)$ the universal pseudo-representation deforming $(\tr(\rhob_0),\det(\rhob_0))$.
Let $A=\begin{pmatrix} R^{\ps,\ord}_{\rhob_0,k}(\ell) & B \\ C & R^{\ps,\ord}_{\rhob_0,k}(\ell)\end{pmatrix}$ be the faithful GMA over $R^{\ps,\ord}_{\rhob_0,k}(\ell)$ and $\rho : G_{\QQ,p\ell} \to A^{\times}$ be the representation attached to $(T,D)$ by Lemma~\ref{gmalem}.
Suppose $\rho(i_{\ell}) =\begin{pmatrix} 1+x & b_{\ell} \\ c_{\ell} & 1-x \end{pmatrix}$ and $\rho(g_0) = \begin{pmatrix} a_0 & 0\\ 0 & d_0 \end{pmatrix}$.
Let $g_{\ell}$ be a lift of $\frob_{\ell}$ in $G_{\QQ_{\ell}}$ and suppose $\rho(g_{\ell}) =\begin{pmatrix} a & b \\ c & d \end{pmatrix}$.

Now Lemma~\ref{cuplem} implies that $\dim(\tan(R^{\ps,\ord}_{\rhob_0,k}(\ell)/(p)))=1$. So the proof of Lemma~\ref{tanlem} implies that any $p$-ordinary, $\ell$-unipotent pseudo-representation $(t,d) : G_{\QQ,p\ell} \to \FF_p[\epsilon]/(\epsilon^2)$ with determinant $\omega_p^{k-1}$ which deforms $(\tr(\rhob_0),\det(\rhob_0))$ is reducible.
Thus Lemma~\ref{tangenlem} implies that $m = (p,x)$ where $m$ is the maximal ideal of $R^{\ps,\ord}_{\rhob_0,k}(\ell)$.
As $\tr(\rho(g_0i_{\ell}))-\tr(\rho(g_0)) = (a_0-d_0)x \in J_0$ and $a_0-d_0 \in (R^{\ps,\ord}_{\rhob_0,k}(\ell))^{\times}$, it follows that $x \in J_0$. Since $m=(p,x)$ and $R^{\ps,\ord}_{\rhob_0,k}(\ell)/J_0 \simeq \ZZ_p$, we get that $J_0=(x)$.

Note that the relation $\rho(g_{\ell}i_{\ell}g_{\ell})^{-1} = \rho(i_{\ell})^{\ell}$ implies that $ab_{\ell} + b(1-x) = b(1+\ell x) + \ell b_{\ell} d$ (see proof of Theorem~\ref{thm2} for more details).
So we have $b_{\ell}(a-\ell d) = bx(1+\ell)$ which means $b_{\ell}c_{\ell}(a-\ell d) = bc_{\ell}x(1+\ell)$. 
By part~\eqref{bhaag4} of Lemma~\ref{gmalem}, we have $B=R^{\ps,\ord}_{\rhob_0,k}(\ell)b_{\ell}$. Thus $b = rb_{\ell}$ for some $r \in R^{\ps,\ord}_{\rhob_0,k}(\ell)$.
As $b_{\ell}c_{\ell}=-x^2$, we get that $x^2(a - \ell d) = x^3r(1+\ell)$.
Since $J_0 = (x)$ is the Eisenstein ideal of $R^{\ps,\ord}_{\rhob_0,k}(\ell)$, Lemma~\ref{redlem} implies that $a \equiv 1 \pmod{(x)}$ and $d \equiv \chi_p^{k-1}(\frob_{\ell}) \pmod{(x)}$.
Therefore, we get $x^2(1-\ell^k + xr')=x^3r(1+\ell)$ which means $x^2(1 - \ell^k + xr'')=0$.

Denote the image of $r \in R^{\ps,\ord}_{\rhob_0,k}(\ell)$ in $(R^{\ps,\ord}_{\rhob_0,k}(\ell))^{\red}$ by $\bar r$. As $J_0$ is a prime ideal of $R^{\ps,\ord}_{\rhob_0,k}(\ell)$, it contains the nilradical of $R^{\ps,\ord}_{\rhob_0,k}(\ell)$.
 Let $J^{\red}_0$ be the image of $J_0$ in $(R^{\ps,\ord}_{\rhob_0,k}(\ell))^{\red}$. So $J^{\red}_0=(\bar x)$. From previous paragraph, we get that $\bar{x}^2(1 - \ell^k + \overline{xr''})=0$.
Therefore, $\bar{x}(1 - \ell^k + \overline{xr''})=0$. As $1-\ell^k = p^{\nu+v_p(k)}.u$ for some $u \in \ZZ_p^{\times}$.
Hence, $|J^{\red}_0/(J^{\red}_0)^2| \leq p^{\nu+v_p(k)}$.

As $\TT_{\fm}$ is reduced, the map $\phi_{\TT} : R^{\ps,\ord}_{\rhob_0,k}(\ell) \to \TT_{\fm}$ factors through $(R^{\ps,\ord}_{\rhob_0,k}(\ell))^{\red}$ to get a map $\phi'_{\TT} : (R^{\ps,\ord}_{\rhob_0,k}(\ell))^{\red} \to \TT_{\fm}$.
Note that $\phi_{\TT}$ is a surjective morphism of augmented $\ZZ_p$-algebras and hence, $\phi'_{\TT}$ is also a surjective morphism of augmented $\ZZ_p$-algebras.
The kernels of the surjective morphisms $(R^{\ps,\ord}_{\rhob_0,k}(\ell))^{\red} \to \ZZ_p$ and $\TT_{\fm} \to \ZZ_p$ are $J^{\red}_0$ and $I^{\eis}$, respectively.

From Lemma~\ref{annih}, it follows that the annihilator of $I^{\eis}$ is $\ker(F)$.
Therefore, $\TT_{\fm}/(I^{\eis} +\ker(F)) \simeq \TT^0_{\fm}/I^{\eis,0}$ and \cite[Theorem $5.1.2$]{W} and \cite[Proposition II.$9.6$]{M2} imply that $\TT^0_{\fm}/I^{\eis,0} \simeq \ZZ/p^{\nu+v_p(k)}\ZZ$.
So, $|J^{\red}_0/(J^{\red}_0)^2| \leq |\TT^0_{\fm}/I^{\eis,0}|$. Hence, Wiles--Lenstra numerical criterion (\cite[Criterion I]{dS}) implies that $\phi'_{\TT}$ is an isomorphism of local complete intersection rings. This proves the first part of the theorem.

{\bf Part~\eqref{item2} :}
We now move on to the second part of the theorem.
Let $(T',D') : G_{\QQ,p\ell} \to  R^{\ps,\st}_{\rhob_0,k}(\ell)$ be the universal pseudo-representation deforming $(\tr(\rhob_0),\det(\rhob_0))$.
Let $A=\begin{pmatrix} R^{\ps,\st}_{\rhob_0,k}(\ell) & B \\ C & R^{\ps,\st}_{\rhob_0,k}(\ell)\end{pmatrix}$ be the faithful GMA over $R^{\ps,\st}_{\rhob_0,k}(\ell)$ and $\rho : G_{\QQ,p\ell} \to A^{\times}$ be the representation attached to $(T,D)$ by Lemma~\ref{gmalem}.
Suppose $\rho(i_{\ell}) =\begin{pmatrix} 1+x & b_{\ell} \\ c_{\ell} & 1-x \end{pmatrix}$.
Let $g_{\ell}$ be a lift of $\frob_{\ell}$ in $G_{\QQ_{\ell}}$ and suppose $\rho(g_{\ell}) =\begin{pmatrix} a & b \\ c & d \end{pmatrix}$.

Note that the pseudo-representation $(1+\chi_p^{k-1},\chi_p^{k-1}) : G_{\QQ,p\ell} \to \ZZ_p$ is unramified at $\ell$. Hence, $J_0$ contains the kernel of the surjective map $R^{\ps,\ord}_{\rhob_0,k}(\ell) \to R^{\ps,\st}_{\rhob_0,k}(\ell)$.
Let $J'_0$ be the image of $J_0$ in $R^{\ps,\st}_{\rhob_0,k}(\ell)$. Using the arguments of the first case of the theorem, we get that $J'_0 = (x)$.

Now the Steinberg-or-unramified at $\ell$ condition implies $\tr(\rho(g)(\rho(g_{\ell})-\ell^{k/2})(\rho(i_{\ell})-1))=0$ for all $g \in G_{\QQ,p\ell}$.
As $R^{\ps,\st}_{\rhob_0,k}(\ell)[\rho(G_{\QQ,p\ell})] = A$, it follows that $\tr(g'(\rho(g_{\ell})-\ell^{k/2})(\rho(i_{\ell})-1))=0$ for all $g' \in A$.
Putting $g' = \begin{pmatrix} 1 & 0 \\ 0 & 0 \end{pmatrix}$, we get $(a-\ell^{k/2})x+bc_{\ell}=0$.

As $B$ is generated by $b_{\ell}$ (by part~\eqref{bhaag4} of Lemma~\ref{gmalem}) and $b_{\ell}c_{\ell}=-x^2$, it follows that $bc_{\ell}=x^2r$ for some $r \in R^{\ps,\st}_{\rhob_0,k}(\ell)$.
As $J'_0=(x)$, Lemma~\ref{redlem} implies that $a \equiv 1 \pmod{(x)}$.
Therefore, $(a-\ell^{k/2})x+bc_{\ell}= x(1-\ell^{k/2} - xr' + xr) = 0$.
As $v_p(1-\ell^{k/2}) = \nu+v_p(k/2) = \nu + v_p(k)$, it follows that $|J'_0/(J'_0)^2| \leq p^{\nu+v_p(k)}$.

Note that $\psi_{\TT}$ is a surjective morphism of augmented $\ZZ_p$-algebras.
The kernels of the surjective morphisms $R^{\ps,\st}_{\rhob_0,k}(\ell) \to \ZZ_p$ and $\TT_{\fm} \to \ZZ_p$ are $J'_0$ and $I^{\eis}$, respectively.
We have already seen in the proof of part $1$ of the theorem that $\TT_{\fm}/(I^{\eis} +\ker(F)) \simeq \TT^0_{\fm}/I^{\eis,0} \simeq \ZZ/p^{\nu+v_p(k)}\ZZ$. Hence, we have $|J'_0/(J'_0)^2| \leq |\TT^0_{\fm}/I^{\eis,0}|$.
So Wiles--Lenstra numerical criterion (\cite[Criterion I]{dS}) implies that $\psi_{\TT}$ is an isomorphism of local complete intersection rings. This proves the second part of the theorem.

{\bf Part~\eqref{item3} :}
We now come to the last part of the theorem.
By \cite[Lemma $3.1$]{D3}, one can choose the universal deformation $\rho^{\univ,\ell} : G_{\QQ,p\ell} \to \GL_2(R^{\defo,\ord}_{\rhob_{c_0},k}(\ell))$ of $\rhob_{c_0}$ such that $\rho^{\univ,\ell}(g_0) = \begin{pmatrix} a_0 & 0 \\ 0 & d_0 \end{pmatrix}$. 

Suppose $\rho^{\univ,\ell}(i_{\ell}) = \begin{pmatrix} 1+x & b_{\ell} \\ c_{\ell} & 1-x\end{pmatrix}$. 
Let $\phi : R^{\ps,\ord}_{\rhob_0,k}(\ell) \to R^{\defo,\ord}_{\rhob_{c_0},k}(\ell)$ be the surjective map induced by $(\tr(\rho^{\univ,\ell}),\det(\rho^{\univ,\ell}))$.
Now we know, from the proof of the first part of the theorem, that $J_0 = (T(g_0i_{\ell})-T(g_0))$. The ideal generated by $\phi(J_0)$ is $I_0$.
So $I_0=(\tr(\rho^{\univ,\ell}(g_0i_{\ell})) - \tr(\rho^{\univ,\ell}(g_0))) = (x)$.
As the ideal generated by $\phi_{\TT^0}(I_0)$ is the Eisenstein ideal $I^{\eis,0}$ of $\TT^0_{\fm}$.
So $I^{\eis,0} = (\phi_{\TT^0}(x))$.

As $I_0=(x)$, it follows that the maximal ideal of $R^{\defo,\ord}_{\rhob_{c_0},k}(\ell)$ is generated by $p$ and $x$.
So we have a surjective map $F_1 : \ZZ_p \llbracket X \rrbracket \to R^{\defo,\ord}_{\rhob_{c_0},k}(\ell)$ which sends $X$ to $x$.
Composing $F_1$ with $\phi_{\TT^0}$, we get a surjective map $F_2 : \ZZ_p \llbracket X \rrbracket \to \TT^0_{\fm}$ such that $F_2(X) = \phi_{\TT^0}(x)$.
As $\ZZ_p \llbracket X \rrbracket$ is a UFD and $\TT^0_{\fm}$ is finite and flat over $\ZZ_p$, it follows that $\ker(F_2)$ is a principal ideal.
Now \cite[Theorem $5.1.2$]{W} and \cite[Proposition II.$9.6$]{M2} imply that $\TT^0_{\fm}/(\phi_{\TT^0}(x)) = \TT^0_{\fm}/I^{\eis,0} \simeq \ZZ/p^{\nu+v_p(k)}\ZZ$.
Hence, we can find a generator $\alpha$ of $\ker(F_2)$ such that $\alpha = p^{\nu+v_p(k)}+Xf(X)$ for some $f(X) \in \ZZ_p \llbracket X \rrbracket$.

Since $I_0=(x) = (F_1(X))$, Lemma~\ref{eisenlem} implies that there exists a $\beta \in \ker(F_1)$ such that $\beta =p^e + Xg(X)$ for some $g(X) \in \ZZ_p \llbracket X \rrbracket$ and $e \leq \nu+v_p(k)$.
As $\ker(F_1) \subset \ker(F_2)$, it follows that there exists some $h(X) \in \ZZ_p \llbracket X \rrbracket$ such that $p^e + Xg(X) = h(X)(p^{\nu+v_p(k)}+Xf(X))$.
So the constant term of $h(x)$ is $p^{e'}$ for some $e' \geq 0$. Now $e \leq \nu+v_p(k)$ which means $e'=0$. Hence, $h(X)$ is a unit which means $\ker(F_2) \subset \ker(F_1)$.
Therefore, we conclude that $\ker(F_2) = \ker(F_1)$. As $F_1$ is surjective, it follows that $\phi_{\TT^0}$ is injective. As $\phi_{\TT^0}$ is also surjective, it follows that $\phi_{\TT^0}$ is an isomorphsim which proves the final part of the theorem.
\end{proof}


\end{document}